\long\def\inhibe#1\endinhibe{\relax}
\newcommand{\QQ}{\mathbb{Q}}
\newcommand{\ZZ}{\mathbb{Z}}
\newcommand{\N}{\mathbb{N}}
\newcommand{\fn}{\mathfrak{n}}
\newcommand{\ft}{\mathfrak{t}}
\newcommand{\DD}{\mathcal{D}}
\newcommand{\sbullet}{{\hspace{.1em}\scriptstyle\bullet\hspace{.1em}}}
\DeclareMathOperator{\ord}{ord}
\DeclareMathOperator{\Aut}{Aut}
\DeclareMathOperator{\supp}{\rm supp}
\DeclareMathOperator{\U}{\mathcal{U}}
\DeclareMathOperator{\Rees}{\mathcal{R}}
\DeclareMathOperator{\Sim}{Sym}
\DeclareMathOperator{\Der}{Der}
\DeclareMathOperator{\Hom}{Hom}
\DeclareMathOperator{\diff}{\mathcal{D}iff}
\DeclareMathOperator{\End}{End}
\DeclareMathOperator{\gr}{\rm gr}
\newcommand{\pcirc}{{\scriptstyle \,\circ\,}}
\newcommand{\smallcirc}{{\scriptstyle \circ}}
\newcommand\Id{{\rm Id}}
\newcommand{\btimes}{{\scriptstyle \,\boxtimes\,}}
\DeclareMathOperator{\Par}{\mathcal{P}}
\DeclareMathOperator{\Partes}{\mathfrak{P}}
\newcommand{\bfu}{{\bf u}}
\newcommand{\bfv}{{\bf v}}
\newcommand{\bfs}{{\bf s}}
\newcommand{\bft}{{\bf t}}
\DeclareMathOperator{\HS}{HS}
\DeclareMathOperator{\Ider}{Ider}
\DeclareMathOperator{\IHS}{IHS}
\DeclareMathOperator{\Sub}{\mathcal{S}}
\DeclareMathOperator{\opvarphi}{\varphi}
\newcommand{\elln}{\ell}
\newtheorem{theorem}{Theorem}
\newtheorem{corollary}{Corollary}
\newtheorem{proposition}{Proposition}
\newtheorem{lemma}{Lemma}
\newcounter{def}
\newenvironment{definition}{\medskip
\refstepcounter{def}\noindent {\bf Definition \thedef.}\ }{\vspace{1ex}\par}
\newcounter{rem}
\newenvironment{remark}{\medskip
\refstepcounter{rem}\noindent {\it Remark \therem.}\ }{\vspace{1ex}\par}
\newcounter{numero}
\newcommand{\numero}{\refstepcounter{numero}\noindent {\bf  \thenumero.\ \ }}
\newcounter{notacion}
\newenvironment{notation}{\medskip
\refstepcounter{notacion}\noindent {\bf Notation \thenotacion.}\ }{\vspace{1ex}\par}
\newcounter{exam}
\newenvironment{example}{\medskip
\refstepcounter{exam}\noindent {\it Remark \theexam.}\ }{\vspace{1ex}\par}
\title{On Hasse--Schmidt derivations: the action of substitution maps}
\author{L. Narv\'{a}ez Macarro\thanks{Partially supported by MTM2016-75027-P, P12-FQM-2696
 and FEDER. }}
\date{}
\begin{document}

\maketitle
\begin{center} \it Dedicated to Antonio Campillo on the ocassion of his 65th birthday\end{center}
\bigskip

\begin{abstract}
{We study the action of substitution maps between power series rings as an additional algebraic structure on the groups of Hasse--Schmidt derivations. This structure appears as a counterpart
 of the module structure on classical derivations. }
\end{abstract}

\section*{Introduction}
\addcontentsline{toc}{section}{Introduction}

For any commutative algebra $A$ over a commutative ring $k$, the set $\Der_k(A)$ of $k$-derivations of $A$ is an ubiquous object in Commutative Algebra and Algebraic Geometry. It carries  an $A$-module structure and a $k$-Lie algebra structure. Both structures give rise to a {\em Lie-Rinehart  algebra} structure over $(k,A)$. The $k$-derivations of $A$ are contained in the filtered ring of $k$-linear differential operators $\DD_{A/k}$, whose graded ring is commutative and we obtain a canonical map of graded $A$-algebras
$$ \tau: \Sim_A \Der_k(A) \longrightarrow \gr \DD_{A/k}.
$$
If $\QQ \subset k$ and $\Der_k(A)$ is a finitely generated projective $A$-module, the map $\tau$ is an isomorphism (\cite[Corollary 2.17]{nar_2009}) and we can deduce that the ring $\DD_{A/k}$ is the enveloping algebra of the Lie-Rinehart algebra $\Der_k(A)$ (cf. \cite[Proposition 2.1.2.11]{nar_Maringa}).

If we are not in characteristic $0$, even if $A$ is ``smooth'' (in some sense) over $k$, e.g. $A$ is a polynomial or a power series ring with coefficients in $k$, the map $\tau$ has no chance to be an isomorphism.

In \cite{nar_2009} we have proved that, if we denote by $\Ider_k(A) \subset \Der_k(A)$ the $A$-module of {\em integrable derivations} in the sense of Hasse--Schmidt (see Definition \ref{def:HS-integ}), then there is a canonical map of graded $A$-algebras
$$ \vartheta: \Gamma_A \Ider_k(A) \longrightarrow \gr \DD_{A/k},
$$
where $\Gamma_A(-)$ denotes the {\em divided power algebra} functor,
such that:
\begin{enumerate}
\item[(i)] $\tau=\vartheta$ when $\QQ \subset k$  (in that case $\Ider_k(A) = \Der_k(A)$ and $\Gamma_A = \Sim_A$).
\item[(ii)] $\vartheta$ is an isomorphism whenever $\Ider_k(A) = \Der_k(A)$ and $\Der_k(A)$ is a finitely generated projective $A$-module.
\end{enumerate}
The above result suggests an idea: under the ``smoothness'' hypothesis (ii), can be the ring $\DD_{A/k}$ and their modules functorially reconstructed from Hasse--Schmidt derivations?
To tackle it, we first need to explore the algebraic structure of Hasse--Schmidt derivations. 
\medskip

Hasse--Schmidt derivations of length $m\geq 1$ form a group, non-abelian for $m\geq 2$, which coincides with the (abelian) additive group of usual derivations $\Der_k(A)$ for $m=1$. But $\Der_k(A)$ has also an $A$-module structure and a natural questions arises: Do Hasse--Schmidt derivations of any length have some natural structure extending the $A$-module structure of $\Der_k(A)$ for length $=1$?
\medskip

This paper is devoted to study the action of {\em substitution maps} (between power series rings) on Hasse--Schmidt derivations as an answer to the above question. This action plays a key role in \cite{nar_in_prep}.
\medskip

Now let us comment on the content of the paper.
\medskip

In Section 1 we have gathered, due to the lack of convenient references, some basic facts and constructions about rings of formal power series in an arbitrary number of variables with coefficients in a non-necessarily commutative ring. In the case of a finite number of variables many results and proofs become simpler, but we need the infinite case in order to study $\infty$-variate Hasse-Schmidt derivations later. 
\medskip

Sections 2 and 3 are devoted to the study of substitution maps between power series rings and their action on power series rings with coefficients on a (bi)module.
\medskip

In Section 4 we study multivariate (possibly $\infty$-variate) Hasse--Schmidt deri\-vations. They are a natural generalization of usual Hasse--Schmidt derivations and they provide a convenient framework to deal with Hasse--Schmidt derivations.
\medskip

In Section 5 we see how substitution maps act on Hasse--Schmidt derivations and we study some compatibilities on this action with respect to the group structure.
\medskip

In Section 6 we show how the action of substitution maps allows us to express any HS-derivation in terms of a fixed one under some natural hypotheses.
This result generalizes Theorem 2.8 in
\cite{magda_nar_hs} and provides a conceptual proof of it.


\section{Rings and (bi)modules of formal power series}

From now on $R$ will be a ring, $k$ will be a commutative ring and $A$ a commutative $k$-algebra. 
A general reference for some of the constructions and results of this section is \cite[\S 4]{bourbaki_algebra_II_chap_4_7}.
\medskip

Let $\bfs$ be a 
set and consider the free commutative monoid $\N^{(\bfs)}$ of maps $\alpha:\bfs \to \N$ such that the set $\supp \alpha := \{s\in \bfs\ |\ \alpha(s)\neq 0\}$ is finite. If $\alpha \in \N^{(\bfs)}$ and $s\in \bfs$ we will write $\alpha_s$ instead of $\alpha(s)$. The elements of the canonical basis of $\N^{(\bfs)}$ will be denoted by $\bfs^t$, $t\in \bfs$:
$\bfs^t_u = \delta_{tu}$ for $t,u\in \bfs$. 
For each $\alpha \in \N^{(\bfs)}$ we have $\alpha = \sum_{\scriptscriptstyle t\in \bfs} \alpha_t \bfs^t$.

The monoid $\N^{(\bfs)}$ is endowed with a natural partial ordering. Namely, for $\alpha,\beta\in \N^{(\bfs)}$, we define
$$ \alpha \leq \beta\quad \stackrel{\text{def.}}{\Longleftrightarrow}\quad \exists \gamma \in \N^{(\bfs)}\ \text{such that}\ \beta = \alpha + \gamma\quad \Leftrightarrow\quad \alpha_s \leq \beta_s\quad \forall s\in \bfs.$$
Clearly, $t\in \supp \alpha$ $\Leftrightarrow$ $\bfs^t \leq \alpha$. The partial ordered set $(\N^{(\bfs)},\leq)$ is a directed ordered set: for any $\alpha,\beta\in \N^{(\bfs)}$, $\alpha,\beta \leq \alpha \vee \beta$ where $(\alpha \vee \beta)_t := \max \{\alpha_t,\beta_t\}$ for all $t\in\bfs$. We will write $\alpha < \beta$ when $\alpha \leq \beta$ and $\alpha \neq \beta$.
\medskip

For a given $\beta \in \N^{(\bfs)}$ the set of $\alpha \in \N^{(\bfs)}$ such that $\alpha \leq \beta$ is finite.
We define $|\alpha| := \sum_{\scriptscriptstyle s\in \bfs} \alpha_s = \sum_{\scriptscriptstyle s\in \supp \alpha} \alpha_s \in \N$. If $\alpha \leq \beta$ then $|\alpha|\leq |\beta|$. Moreover, if $\alpha \leq \beta$ and $|\alpha| = |\beta|$, then $\alpha=\beta$.
The $\alpha \in \N^{(\bfs)}$ with $|\alpha| = 1$ are exactly the elements $\bfs^t$, $t\in \bfs$, of the canonical basis.
\medskip

A {\em formal power series} \index{formal power series} in $\bfs$ with coefficients in $R$ is a formal expression $\sum_{\scriptscriptstyle \alpha \in \N^{(\bfs)}} r_\alpha \bfs^\alpha$ with $r_\alpha \in R$ and $\bfs^\alpha = \prod_{s\in \bfs} s^{\alpha_s} = \prod_{s\in \supp \alpha} s^{\alpha_s}$. Such a formal expression is uniquely determined by the family of coefficients $a_\alpha$, $\alpha \in \N^{(\bfs)}$.
\medskip

If $r=\sum_{\scriptscriptstyle \alpha \in \N^{(\bfs)}} r_\alpha \bfs^\alpha$ and $r'=\sum_{\scriptscriptstyle \alpha \in \N^{(\bfs)}} r'_\alpha \bfs^\alpha$ are two formal power series in $\bfs$ with coefficients in $R$, their sum and their product are defined in the usual way
\begin{eqnarray*}
\displaystyle   r+r' :=  \sum_{\scriptscriptstyle \alpha \in \N^{(\bfs)}} S_\alpha \bfs^\alpha, &\displaystyle   S_\alpha := r_\alpha + r'_\alpha,&\\
\displaystyle r r' :=  \sum_{\scriptscriptstyle \alpha \in \N^{(\bfs)}} P_\alpha \bfs^\alpha, &\displaystyle   P_\alpha := \sum_{\scriptscriptstyle \beta + \gamma=\alpha} r_\beta  r'_\gamma. &
\end{eqnarray*}

The set of formal power series in $\bfs$ with coefficients in $R$ endowed with the above internal operations is a ring called the {\em ring of formal power series in $\bfs$ with coefficients in $R$}\index{ring of formal power series} and is
denoted by $R[[\bfs]]$. It contains the polynomial ring $R[\bfs]$ (and so the ring $R$) and all the monomials $\bfs^\alpha$ are in the center of $R[[\bfs]]$. There is a natural ring epimorphism, that we call the {\em augmentation}, given by
\begin{equation} \label{eq:augmen-psr}
 \sum_{\scriptscriptstyle \alpha \in \N^{(\bfs)}} r_\alpha \bfs^\alpha \in R[[\bfs]] \longmapsto r_0 \in R,
\end{equation}
which is a retraction of the inclusion $R\subset R[[\bfs]]$. Clearly, the ring $R[[\bfs]]$ is commutative if and only if $R$ is commutative and $R^{\text{opp}}[[\bfs]] = R[[\bfs]]^{\text{opp}} $.
\medskip

Any ring homomorphism $f: R \to R'$ induces a ring homomorphism
\begin{equation} \label{eq:f-bar-ring}
 \overline{f}: \sum_{\scriptscriptstyle \alpha \in \N^{(\bfs)}} r_\alpha \bfs^\alpha \in R[[\bfs]] \longmapsto \sum_{\scriptscriptstyle \alpha \in \N^{(\bfs)}} f(r_\alpha) \bfs^\alpha \in R'[[\bfs]],
\end{equation}
and clearly the correspondences $R \mapsto R[[\bfs]]$ and $f \mapsto \overline{f}$ define a functor from the category of rings to itself. If $\bfs=\emptyset$, then $R[[\bfs]] = R$ and the above functor is the identity.
\medskip

\begin{definition}\label{def:k-algebra-over-A}
A {\em $k$-algebra over $A$}  is a (non-necessarily commutative) $k$-algebra $R$ endowed with a map of $k$-algebras $\iota:A \to R$. A map between two $k$-algebras $\iota:A \to R$ and $\iota':A \to R'$ over $A$ is a map $g:R\to R'$ of $k$-algebras such that $\iota' = g \pcirc \iota$.
\end{definition}

If $R$ is a $k$-algebra (over $A$), then $R[[\bfs]]$ is also a $k[[\bfs]]$-algebra (over $A[[\bfs]]$).
\medskip

If $M$ is an $(A;A)$-bimodule, we define in a completely similar way the set of formal power series in $\bfs$ with coefficients in $M$, denoted by $M[[\bfs]]$. It carries an addition $+$, for which it is an abelian group, and  left and right products by elements of $A[[\bfs]]$. With these operations $M[[\bfs]]$ becomes an $(A[[\bfs]];A[[\bfs]])$-bimodule containing the polynomial $(A[\bfs];A[\bfs])$-bimodule $M[\bfs]$. There is also a natural {\em augmentation} $M[[\bfs]] \to M$ which is a section of the inclusion $M \subset M[\bfs]$ and
$M^{\text{opp}}[[\bfs]] = M[[\bfs]]^{\text{opp}}$. If $\bfs=\emptyset$, then $M[[\bfs]]=M$. 
\medskip

The {\em support}  of a series $m=\sum_\alpha m_\alpha \bfs^\alpha \in M[[\bfs]]$ is $\supp(x) := \{  \alpha \in \N^{(\bfs)}| m_\alpha \neq 0\} \subset \N^{(\bfs)}$. It is clear that $m=0\Leftrightarrow \supp(m) = \emptyset$. 
The {\em order}  of a non-zero series $m=\sum_\alpha m_\alpha \bfs^\alpha \in M[[\bfs]]$ is
$ \ord (m) := \min \{ |\alpha|\ |\ \alpha \in \supp(m) \} \in \N$. 
If $m=0$  we define $\ord (0) = \infty$. It is clear that for $a\in A[[\bfs]]$ and $m,m'\in M[[\bfs]]$ we have 
$\supp(m+m') \subset \supp(m) \cup \supp(m')$, $\supp (am), \supp(ma) \subset \supp(m) + \supp(a)$,
$\ord(m+m') \geq \min \{\ord(m),\ord(m')\}$ and 
$\ord (am), \ord (ma) \geq \ord(a) + \ord(m)$. Moreover, if $\ord(m') > \ord(m)$, then $\ord(m+m') = \ord(m)$.
\medskip

Any $(A;A)$-linear map $h:M \to M'$ between two $(A;A)$-bimodules induces in an obvious way and $(A[[\bfs]];A[[\bfs]])$-linear map 
\begin{equation} \label{eq:h-bar-mod}
 \overline{h}: \sum_{\scriptscriptstyle \alpha \in \N^{(\bfs)}} m_\alpha \bfs^\alpha \in M[[\bfs]] \longmapsto \sum_{\scriptscriptstyle \alpha \in \N^{(\bfs)}} h(m_\alpha) \bfs^\alpha \in M'[[\bfs]],
\end{equation}
and clearly the correspondences $M \mapsto M[[\bfs]]$ and $h \mapsto \overline{h}$ define a functor from the category of $(A;A)$-bimodules to the category $(A[[\bfs]];A[[\bfs]])$-bimodules. 
\medskip

For each $\beta \in M^{(\bfs)}$, let us denote by $\fn^M_\beta(\bfs)$ the subset of $M[[\bfs]]$ whose elements are the formal power series $\sum m_\alpha \bfs^\alpha$ with $m_\alpha = 0$ for all $\alpha \leq \beta$. One has $\fn^M_\beta(\bfs) \subset \fn^M_\gamma(\bfs)$ whenever $\gamma \leq \beta$, and $ \fn^M_{\alpha \vee\beta}(\bfs) \subset \fn^M_\alpha(\bfs) \cap \fn^M_\beta(\bfs)$. \medskip

It is clear that the $\fn^M_\beta(\bfs)$ are sub-bimodules of $M[[\bfs]]$ and $\fn^A_\beta(\bfs) M[[\bfs]] \subset \fn^M_\beta(\bfs)$ and  $ M[[\bfs]] \fn^A_\beta(\bfs) \subset \fn^M_\beta(\bfs)$. For $\beta=0$, $\fn^M_0(\bfs)$ is the kernel of the augmentation $M[[\bfs]] \to M$.

In the case of a ring $R$,
 the $\fn^R_\beta(\bfs)$ are two-sided ideals of $R[[\bfs]]$, and $\fn^R_0(\bfs)$ is the kernel of the augmentation $R[[\bfs]] \to R$.
\medskip

We will consider $R[[\bfs]]$ as a topological ring with $\{\fn^R_\beta(\bfs), \beta \in \N^{(\bfs)}\}$ as a fundamental system of neighborhoods of $0$. We will also consider $M[[\bfs]]$ as a topological $(A[[\bfs]];A[[\bfs]])$-bimodule with $\{\fn^M_\beta(\bfs), \beta \in \N^{(\bfs)}\}$ as a fundamental system of neighborhoods of $0$ for both, a topological left $A[[\bfs]]$-module structure and a topological right $A[[\bfs]]$-module structure. If $\bfs$ is finite, then $\fn^M_\beta(\bfs)= \sum_{s\in\bfs} s^{\beta_s +1} M[[\bfs]] = \sum_{s\in\bfs}  M[[\bfs]] s^{\beta_s +1}$ and so the above topologies on $R[[\bfs]]$, and so on $A[[\bfs]]$, and on $M[[\bfs]]$ coincide with the $\langle \bfs \rangle$-adic topologies.
\medskip

Let us denote by $\fn^M_\beta(\bfs)^{\bf c} \subset M[\bfs]$ the intersection of $\fn^M_\beta(\bfs)$ with  $M[\bfs]$, i.e. the subset of $M[\bfs]$ whose elements are the finite sums $\sum m_\alpha \bfs^\alpha$ with $m_\alpha = 0$ for all $\alpha \leq \beta$. It is clear that the natural map
$ R[\bfs]/\fn^R_\beta(\bfs)^{\bf c} \longrightarrow R[[\bfs]]/\fn^R_\beta(\bfs)$
is an isomorphism of rings and the quotient $R[[\bfs]]/\fn^R_\beta(\bfs)$ is a finitely generated free left (and right) $R$-module with basis the set of the classes of monomials $\bfs^\alpha$, $\alpha\leq \beta$. 
\medskip

In the same vein, the $\fn^M_\beta(\bfs)^{\bf c}$ are sub-$(A[\bfs];A[\bfs])$-bimodules of 
$M[\bfs]$ and the natural map
$ M[\bfs]/\fn^M_\beta(\bfs)^{\bf c} \longrightarrow M[[\bfs]]/\fn^M_\beta(\bfs)$
is an isomorphism of bimodules over $(A[\bfs]/\fn^A_\beta(\bfs)^{\bf c};A[\bfs]/\fn^A_\beta(\bfs)^{\bf c})$. Moreover, we have a commutative diagram of natural $\ZZ$-linear isomorphisms
\begin{equation} \label{eq:varrho-lambda}
\begin{CD}
A[\bfs]/\fn^A_\beta(\bfs)^{\bf c} \otimes_A M @>{\varrho}>{\simeq}> M[\bfs]/\fn^M_\beta(\bfs)^{\bf c} @<{\lambda}<{\simeq}< M \otimes_A A[\bfs]/\fn^A_\beta(\bfs)^{\bf c}\\
@V{\text{nat.} \otimes \Id}V{\simeq}V @VV{\simeq }V @V{\simeq}V{ \Id \otimes \text{nat.} }V\\
A[[\bfs]]/\fn^A_\beta(\bfs) \otimes_A M @>{\varrho'}>{\simeq}> M[[\bfs]]/\fn^M_\beta(\bfs) @<{\lambda'}<{\simeq}< M \otimes_A A[[\bfs]]/\fn^A_\beta(\bfs)
\end{CD}
\end{equation}
where $\varrho$ (resp. $\varrho'$) is an isomorphism of 
  $(A[\bfs]/\fn^A_\beta(\bfs)^{\bf c};A)$-bimodules (resp. of  $(A[[\bfs]]/\fn^A_\beta(\bfs);A)$-bimodules   ) and $\lambda$ (resp. $\lambda'$) is an isomorphism of bimodules over $(A;A[\bfs]/\fn^A_\beta(\bfs)^{\bf c})$(resp. over $(A;A[[\bfs]]/\fn^A_\beta(\bfs))$.
\medskip

It is clear that the natural map 
$$ R[[\bfs]] \longrightarrow  \lim_{\stackrel{\longleftarrow}{\beta\in \N^{(\bfs)}}} R[[\bfs]]/\fn^R_\beta(\bfs) \equiv \lim_{\stackrel{\longleftarrow}{\beta\in \N^{(\bfs)}}} R[\bfs]/\fn^R_\beta(\bfs)^{\bf c}  $$
is an isomorphism of rings and so $R[[\bfs]]$ is complete (hence, separated). Moreover, $R[[\bfs]]$ appears as the completion of the polynomial ring $R[\bfs]$ endowed with the topology with $\{\fn^R_\beta(\bfs)^{\bf c}, \beta\in \N^{(\bfs)}\}$ as a fundamental system of neighborhoods of $0$. 

Similarly, the natural map 
$$ M[[\bfs]] \longrightarrow  \lim_{\stackrel{\longleftarrow}{\beta\in \N^{(\bfs)}}} M[[\bfs]]/\fn^M_\beta(\bfs) \equiv \lim_{\stackrel{\longleftarrow}{\beta\in \N^{(\bfs)}}} M[\bfs]/\fn^M_\beta(\bfs)^{\bf c}  $$
is an isomorphism of $(A[[\bfs]];A[[\bfs]])$-bimodules, and so $M[[\bfs]]$ is complete (hence, separated). Moreover, $M[[\bfs]]$ appears as the completion of the bimo\-dule $M[\bfs]$ over $(A[\bfs];A[\bfs])$ endowed with the topology with $\{\fn^M_\beta(\bfs)^{\bf c}, \beta\in \N^{(\bfs)}\}$ as a fundamental system of neighborhoods of $0$.
\medskip

Since the subsets $\{\alpha \in 
\N^{(\bfs)}\ |\ \alpha \leq \beta\}$, $\beta\in \N^{(\bfs)}$, are cofinal among the finite subsets of $\N^{(\bfs)}$, the additive isomorphism
$$ \sum_{\scriptscriptstyle \alpha \in \N^{(\bfs)}} m_\alpha \bfs^\alpha \in M[[\bfs]]  \mapsto \{m_\alpha\}_{\alpha \in \N^{(\bfs)}} \in M^{\N^{(\bfs)}}$$
is a homeomorphism, where $M^{\N^{(\bfs)}}$ is endowed with the product of discrete topologies on each copy of $M$. In particular, any formal power series $ \sum m_\alpha \bfs^\alpha$ is the limit of its finite partial sums $ \sum_{\alpha \in F} m_\alpha \bfs^\alpha$, over the filter of finite subsets $F\subset \N^{(\bfs)}$.
\medskip

Since the quotients $A[[\bfs]]/\fn^A_\beta(\bfs)$ are free  $A$-modules,
we have exact sequences
$$0 \longrightarrow \fn^A_\beta(\bfs) \otimes_A M \longrightarrow A[[\bfs]]\otimes_A M \longrightarrow \frac{A[[\bfs]]}{\fn^A_\beta(\bfs)} \otimes_A M \longrightarrow 0
$$  
and the tensor product $A[[\bfs]]\otimes_A M$ is a topological left $A[[\bfs]]$-module with $\{\fn^A_\beta(\bfs)\otimes_A M, \beta \in \N^{(\bfs)}\}$ as a fundamental system of neighborhoods of $0$. The natural $(A[[\bfs]];A)$-linear map
$$ A[[\bfs]]\otimes_A M \longrightarrow M[[\bfs]]
$$
is continuous and, if we denote by $ A[[\bfs]]\widehat{\otimes}_A M$ the completion of $ A[[\bfs]]\otimes_A M$, the induced map $ A[[\bfs]]\widehat{\otimes}_A M \longrightarrow M[[\bfs]]$ is an isomorphism of $(A[[\bfs]];A)$-bimodules, since
we have natural $(A[[\bfs]];A)$-linear isomorphisms
$$ \left( A[[\bfs]]\otimes_A M \right)/\left( \fn^A_\beta(\bfs)\otimes_A M \right) \simeq  \left( A[[\bfs]]/\fn^A_\beta(\bfs) \right)\otimes_A M \simeq M[[\bfs]]/\fn^M_\beta(\bfs) 
$$
for $\beta \in \N^{(\bfs)}$, and so
\begin{equation} \label{eq:A[[s]]wotM}
 A[[\bfs]]\widehat{\otimes}_A M = \lim_{\stackrel{\longleftarrow}{\beta\in \N^{(\bfs)}}} 
\left(\frac{ A[[\bfs]]\otimes_A M }{ \fn^A_\beta(\bfs)\otimes_A M }\right) \simeq 
\lim_{\stackrel{\longleftarrow}{\beta\in \N^{(\bfs)}}}  \left( \frac{M[[\bfs]]}{\fn^M_\beta(\bfs)} \right) \simeq M[[\bfs]].
\end{equation}
Similarly, the natural $(A;A[[\bfs]])$-linear map $M\otimes_A A[[\bfs]] \to M[[\bfs]]$ induces an isomorphism 
$ M \widehat{\otimes}_A  A[[\bfs]] \xrightarrow{\sim} M[[\bfs]]$ of $(A;A[[\bfs]])$-bimodules.
\medskip

If $h: M \to M'$ is an $(A;A)$-linear map between two $(A;A)$-bimodules, the induced map $\overline{h}:M[[\bfs] \to M'[[\bfs]$ (see (\ref{eq:h-bar-mod})) is clearly continuous 
and there is a commutative diagram
$$
\begin{CD}
A[[\bfs]]\widehat{\otimes}_A M @>{\simeq}>> M[[\bfs]] @<{\simeq}<< M \widehat{\otimes}_A  A[[\bfs]] \\
@V{\Id\widehat{\otimes} h }VV @V{\overline{h}}VV @V{h\widehat{\otimes} \Id }VV\\
A[[\bfs]]\widehat{\otimes}_A M' @>{\simeq}>> M'[[\bfs]] @<{\simeq}<< M' \widehat{\otimes}_A  A[[\bfs]].
\end{CD}
$$
Similarly, for any ring homomorphism $f: R \to R'$, the induced ring homomorphism
$ \overline{f}: R[[\bfs]] \to  R'[[\bfs]]$ is also continuous.

\begin{definition} We say that a subset $\Delta \subset \N^{(\bfs)}$ is an {\em ideal} of $\N^{(\bfs)}$ \index{ideal (of $\N^{(\bfs)}$)} (resp. a
{\em co-ideal}\index{co-ideal (of $\N^{(\bfs)}$)} of $\N^{(\bfs)}$)
if whenever $\alpha\in\Delta$ and $\alpha\leq \alpha'$ (resp. $\alpha'\leq \alpha$), then $\alpha'\in\Delta$.
\end{definition}
It is clear that $\Delta$ is an ideal if and only if its complement $\Delta^c$ is a co-ideal, and that the union and the intersection of any family of ideals (resp. of co-ideals)  of $\N^{(\bfs)}$ is again an ideal (resp. a co-ideal) of $\N^{(\bfs)}$. 
Examples of ideals (resp. of co-ideals) of $\N^{(\bfs)}$ are the $\beta + \N^{(\bfs)}$
(resp. the $\fn_\beta(\bfs) := \{\alpha \in \N^{(\bfs)}\ |\ \alpha\leq \beta\}$) with $\beta \in \N^{(\bfs)}$. The $\ft_m(\bfs) := \{\alpha \in \N^{(\bfs)}\ |\ |\alpha|\leq m\}$ with $m\geq 0$ are also co-ideals.
Actually, a subset $\Delta  \subset \N^{(\bfs)}$ is an ideal (resp. a co-ideal) if and only if
$\Delta = \cup_{\beta\in \Delta} \left(\beta + \N^{(\bfs)}\right) = \Delta + \N^{(\bfs)} $
  (resp. $\Delta = \cup_{\beta\in \Delta} \fn_\beta(\bfs)$).
\medskip

We say that a co-ideal $\Delta \subset \N^{(\bfs)}$ is bounded if there is an integer $m\geq 0$ such that  $|\alpha|\leq m$ for all $\alpha\in\Delta$. In other words, a co-ideal $\Delta\subset \N^{(\bfs)}$ is bounded if and only if there is an integer $m\geq 0$ such that $\Delta \subset \ft_m(\bfs)$. Also, a co-ideal $\Delta\subset \N^{(\bfs)}$ is non-empty if and only if $\ft_0(\bfs) = \fn_0(\bfs) =\{0\} \subset \Delta$.
\medskip

For a co-ideal $\Delta \subset \N^{(\bfs)}$ and an integer $m\geq 0$, we denote $\Delta^m := \Delta \cap \ft_m(\bfs)$.
\medskip\\

For each co-ideal $\Delta \subset \N^{(\bfs)}$, we denote by $\Delta_M$ the sub-$(A[[\bfs];A[[\bfs]])$-bimodule of $M[[\bfs]]$ whose elements are the formal power series $\sum_{\alpha\in\N^{(\bfs)}} m_\alpha \bfs^\alpha$ such that $m_\alpha=0$ whenever $\alpha\in \Delta$. One has
\begin{eqnarray*}
&\displaystyle
 \Delta_M = \cdots = 
\left\{m\in M[[\bfs]]\ |\ \supp(m) \subset \bigcap_{\beta\in \Delta} \fn_\beta(\bfs)^c\right\}=
&\\
&\displaystyle
\bigcap _{\beta\in \Delta} \left\{m\in M[[\bfs]]\ |\ \supp(m) \subset  \fn_\beta(\bfs)^c\right\} =
 \bigcap_{\beta\in \Delta} \fn^M_\beta(\bfs),
\end{eqnarray*}
and so $\Delta_M$ is closed in $M[[\bfs]]$. Let $\Delta' \subset \N^{(\bfs)}$ be another co-ideal.
We have
$$ \Delta_M + \Delta'_M = (\Delta \cap \Delta')_M.
$$
If $\Delta \subset \Delta'$, then $\Delta'_M \subset \Delta_M$, 
and if $a\in \Delta'_A$, $m\in \Delta_M$ we have
$$ \supp (am) \subset \supp(a) + \supp(m) \subset \left(\Delta'\right)^c + \Delta^c \subset \left(\Delta'\right)^c \cap \Delta^c =\left( \Delta' \cup \Delta\right)^c ,
$$
and so $ \Delta'_A \Delta_M \subset (\Delta' \cup \Delta)_M$. Is a similar way we obtain 
$ \Delta_M \Delta'_A  \subset (\Delta' \cup \Delta)_M$.
\medskip\\

Let us denote by $M[[\bfs]]_\Delta := M[[\bfs]]/\Delta_M$ endowed with the quotient topology. The elements in $M[[\bfs]]_\Delta$ are power series of the form 
$$\sum_{\scriptscriptstyle \alpha\in\Delta} m_\alpha \bfs^\alpha,\quad m_\alpha \in M.
$$
It is clear that $M[[\bfs]]_\Delta$ is a topological $(A[[\bfs]]_\Delta;A[[\bfs]]_\Delta)$-bimodule.
A fundamental system of neighborhoods of $0$ in $M[[\bfs]]_\Delta$ consist of
$$\frac{\fn^M_\beta(\bfs) + \Delta_M}{\Delta_M} = \frac{(\fn_\beta(\bfs) \cap\Delta)_M}{\Delta_M},\quad \beta \in \N^{(\bfs)},
$$
and since the subsets 
$\fn_\beta(\bfs) \cap \Delta, \beta \in \N^{(\bfs)}$, 
are cofinal among the finite subsets of $\Delta$, we conclude that the additive isomorphism
$$ \sum_{\scriptscriptstyle \alpha\in\Delta} m_\alpha \bfs^\alpha \in M[[\bfs]]_\Delta  \mapsto \{m_\alpha\}_{\alpha \in \Delta} \in M^\Delta    $$
is a homeomorphism, where $M^\Delta$ is endowed with the product of discrete topologies on each copy of $M$. 
\medskip\\

\noindent For $\Delta \subset \Delta'$ co-ideals of $\N^{(\bfs)}$, we have natural 
continuous $(A[[\bfs]]_{\Delta'};A[[\bfs]]_{\Delta'})$-linear projections $\tau_{\Delta' \Delta}:M[[\bfs]]_{\Delta'} \longrightarrow M[[\bfs]]_\Delta$, that we also call {\em truncations}, \index{truncation} 
$$\tau_{\Delta' \Delta} : \sum_{\scriptscriptstyle \alpha\in\Delta'} m_\alpha \bfs^\alpha \in M[[\bfs]]_{\Delta'} \longmapsto \sum_{\scriptscriptstyle \alpha\in\Delta} m_\alpha \bfs^\alpha \in M[[\bfs]]_{\Delta},
$$
and continuous $(A;A)$-linear scissions 
$$\sum_{\scriptscriptstyle \alpha\in\Delta} m_\alpha \bfs^\alpha \in M[[\bfs]]_\Delta \longmapsto \sum_{\scriptscriptstyle \alpha\in\Delta} m_\alpha \bfs^\alpha \in M[[\bfs]]_{\Delta'}.
$$ 
which are topological immersions.
\medskip

In particular we have natural continuous $(A;A)$-linear topological embeddings $M[[\bfs]]_\Delta \hookrightarrow M[[\bfs]]$ and we define the {\em support} (resp. the {\em order}) of any element in $M[[\bfs]]_\Delta$ as its support (resp. its order) as element of $M[[\bfs]]$.
\medskip

We have a bicontinuous isomorphism of $(A[[\bfs]]_\Delta;A[[\bfs]]_\Delta)$-bimodules
$$ M[[\bfs]]_\Delta = \lim_{\stackrel{\longleftarrow}{m\in\N}} M[[\bfs]]_{\Delta^m}.
$$
For a ring $R$, the $\Delta_R$ are two-sided closed ideals of $R[[\bfs]]$,
$\Delta_R\Delta'_R \subset (\Delta\cup\Delta')_R$
 and we have a bicontinuous ring isomorphism
$$ R[[\bfs]]_\Delta = \lim_{\stackrel{\longleftarrow}{m\in\N}} R[[\bfs]]_{\Delta^m}.
$$
When $\bfs$ is finite, $\ft_m(\bfs)_R$ coincides with the $(m+1)$-power of the two-sided ideal generated by all the variables $s\in \bfs$. 
\medskip

As in (\ref{eq:A[[s]]wotM}) one proves that $A[[\bfs]]_\Delta \otimes_A M$ (resp. $M\otimes_A A[[\bfs]]_\Delta$) is endowed with a natural topology in such a way that the natural map $A[[\bfs]]_\Delta \otimes_A M \to M[[\bfs]]_\Delta$ (resp. $M\otimes_A A[[\bfs]]_\Delta \to M[[\bfs]]_\Delta$)
is continuous and gives rise to a $(A[[\bfs]]_\Delta;A)$-linear (resp. to a $(A;A[[\bfs]]_\Delta)$-linear) isomorphism
$$ A[[\bfs]]_\Delta \widehat{\otimes}_A M \xrightarrow{\sim} M[[\bfs]]_\Delta\quad\quad \text{(resp. $
M\widehat{\otimes}_A A[[\bfs]]_\Delta \xrightarrow{\sim} M[[\bfs]]_\Delta$)}.
$$
If $h: M \to M'$ is an $(A;A)$-linear map between two $(A;A)$-bimodules, the  map $\overline{h}:M[[\bfs] \to M'[[\bfs]$ (see (\ref{eq:h-bar-mod})) obviously satisfies $\overline{h}(\Delta_M) \subset \Delta_{M'}$, and so induces another natural $(A[[\bfs]]_\Delta;A[[\bfs]]_\Delta)$-linear continuous map $M[[\bfs]_\Delta \to M'[[\bfs]_\Delta$, that will be still denoted by 
$\overline{h}$. We have a commutative diagram
$$
\begin{CD}
A[[\bfs]]_\Delta\widehat{\otimes}_A M @>{\simeq}>> M[[\bfs]]_\Delta @<{\simeq}<< M \widehat{\otimes}_A  A[[\bfs]]_\Delta \\
@V{\Id\widehat{\otimes} h }VV @V{\overline{h}}VV @V{h\widehat{\otimes} \Id }VV\\
A[[\bfs]]_\Delta\widehat{\otimes}_A M' @>{\simeq}>> M'[[\bfs]]_\Delta @<{\simeq}<< M' \widehat{\otimes}_A  A[[\bfs]]_\Delta.
\end{CD}
$$
\begin{remark} In the same way that the correspondences $M \mapsto M[[\bfs]]$ and $h \mapsto \overline{h}$ define a functor from the category of $(A;A)$-bimodules to the category of $(A[[\bfs]];A[[\bfs]])$-bimodules, we may consider functors $M \mapsto M[[\bfs]]_\Delta$ and $h \mapsto \overline{h}$ from the category of $(A;A)$-bimodules to the category of $(A[[\bfs]]_\Delta;A[[\bfs]]_\Delta)$-bimodules. We may also consider functors 
$R \mapsto R[[\bfs]]_\Delta$ and $f \mapsto \overline{f}$ from the category of rings to itself. Moreover, if $R$ is a $k$-algebra (over $A$), then $R[[\bfs]]_\Delta$ is a $k[[\bfs]]_\Delta$-algebra (over $A[[\bfs]]_\Delta$).
\end{remark}

\begin{lemma} \label{lema:topol-of-M[[s]]}
Under the above hypotheses, $\Delta_M$ is the closure of $\Delta_\ZZ M[[\bfs]]$.
\end{lemma}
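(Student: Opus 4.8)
The plan is to establish the two inclusions $\overline{\Delta_\ZZ M[[\bfs]]} \subset \Delta_M$ and $\Delta_M \subset \overline{\Delta_\ZZ M[[\bfs]]}$ separately, the first being a support computation and the second a density statement coming from the completeness of $M[[\bfs]]$ already recorded above. Here $\Delta_\ZZ M[[\bfs]]$ makes sense because $M[[\bfs]]$ is a module over $\ZZ[[\bfs]]$: the monomials $\bfs^\alpha$ are central and the integer coefficients act on $M$, so the Cauchy product restricts the $A[[\bfs]]$-module structure along $\overline{f}\colon \ZZ[[\bfs]]\to A[[\bfs]]$ induced by the structure map $\ZZ\to A$.

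For the inclusion $\overline{\Delta_\ZZ M[[\bfs]]} \subset \Delta_M$, I would first check $\Delta_\ZZ M[[\bfs]] \subset \Delta_M$. An element of $\Delta_\ZZ M[[\bfs]]$ is a finite sum $\sum_i c_i m_i$ with $c_i \in \Delta_\ZZ$ and $m_i \in M[[\bfs]]$, so by the support estimates of Section 1 one has $\supp(c_i m_i) \subset \supp(c_i) + \supp(m_i) \subset \Delta^c + \N^{(\bfs)}$; since $\Delta$ is a co-ideal, $\Delta^c$ is an ideal of $\N^{(\bfs)}$, hence $\Delta^c + \N^{(\bfs)} = \Delta^c$, and therefore $c_i m_i \in \Delta_M$. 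As $\Delta_M$ is a subgroup, $\sum_i c_i m_i \in \Delta_M$. Finally $\Delta_M = \bigcap_{\beta\in\Delta}\fn^M_\beta(\bfs)$ is closed in $M[[\bfs]]$ (as already observed), so taking closures yields the desired inclusion.

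For the reverse inclusion $\Delta_M \subset \overline{\Delta_\ZZ M[[\bfs]]}$, take $m = \sum_{\alpha\in\Delta^c} m_\alpha \bfs^\alpha \in \Delta_M$. For each $\alpha\in\Delta^c$ the monomial $\bfs^\alpha$, regarded in $\ZZ[[\bfs]]$, has support $\{\alpha\}\subset\Delta^c$, hence $\bfs^\alpha\in\Delta_\ZZ$; so $m_\alpha\bfs^\alpha = \bfs^\alpha\cdot m_\alpha$ (with $m_\alpha\in M\subset M[[\bfs]]$) belongs to $\Delta_\ZZ M[[\bfs]]$, and consequently so does every finite partial sum $\sum_{\alpha\in F} m_\alpha\bfs^\alpha$, $F\subset\Delta^c$ finite, since $\Delta_\ZZ M[[\bfs]]$ is a subgroup. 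By the homeomorphism $M[[\bfs]] \cong M^{\N^{(\bfs)}}$ recalled above, $m$ is the limit of these finite partial sums over the filter of finite subsets, so $m \in \overline{\Delta_\ZZ M[[\bfs]]}$. Combining the two inclusions gives $\Delta_M = \overline{\Delta_\ZZ M[[\bfs]]}$.

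I do not anticipate a genuine obstacle here; the only points requiring care are that the product $\Delta_\ZZ M[[\bfs]]$ is well defined (the $\ZZ[[\bfs]]$-module structure), that $\Delta^c$ is an ideal — which is exactly what forces $\Delta_\ZZ M[[\bfs]] \subset \Delta_M$ — and that $\Delta_M$ is closed while finite partial sums converge, both of which are already available in the excerpt.
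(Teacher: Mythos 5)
Your proof is correct and takes essentially the same route as the paper: the paper's own (one-line) proof spells out only the inclusion $\Delta_M \subset \overline{\Delta_\ZZ M[[\bfs]]}$ via exactly your monomial decomposition $m=\sum_{\alpha\in\Delta^c}\bfs^\alpha m_\alpha$ and the convergence of finite partial sums, leaving the reverse inclusion implicit in the previously recorded facts that $\Delta_M$ is closed and that the support estimates force $\Delta_\ZZ M[[\bfs]]\subset \Delta_M$. Your write-up simply makes both halves explicit.
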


\begin{proof} 
Any element in $\Delta_M$ is of the form $\sum_{\scriptscriptstyle \alpha\in\Delta} m_\alpha \bfs^\alpha$, but $\bfs^\alpha m_\alpha \in \Delta_\ZZ M[[\bfs]]$ whenever $\alpha\in\Delta$ and so it belongs to the closure of $\Delta_\ZZ M[[\bfs]]$.
\end{proof}

\begin{lemma} \label{lemma:unit-psr}
Let $R$ be a ring, $\bfs$ a set and $\Delta\subset \N^{(\bfs)}$ a non-empty co-ideal. 
The units in $R[[\bfs]]_\Delta$ are those power series $r=\sum r_\alpha \bfs^\alpha$ such that $r_0$ is a unit in $R$. Moreover, in the special case where $r_0=1$, the inverse $r^*= \sum r^*_\alpha \bfs^\alpha$ of $r$ is given by
$r^*_0 = 1$ and
$$ r^*_\alpha =  \sum_{\scriptscriptstyle d=1}^{\scriptscriptstyle |\alpha|} (-1)^d
\sum_{\scriptscriptstyle \alpha^\bullet \in \Par(\alpha,d)}
r_{\alpha^1} \cdots  r_{\alpha^d}\quad \text{for\ }\ \alpha\neq 0,
$$
where $\Par(\alpha,d)$ is the set of $d$-uples $\alpha^\bullet=(\alpha^1,\dots,\alpha^d)$ with $\alpha^i \in \N^{(\bfs)}$, $\alpha^i\neq 0$, and $\alpha^1+\cdots + \alpha^d=\alpha$.
\end{lemma}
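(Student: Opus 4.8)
The plan is to first reduce the "units" characterization to a purely formal computation. One direction is immediate: if $r=\sum r_\alpha \bfs^\alpha$ is a unit in $R[[\bfs]]_\Delta$ with inverse $r'$, then applying the augmentation $R[[\bfs]]_\Delta \to R$ (which is a ring homomorphism, since $\Delta$ is a non-empty co-ideal, hence contains $0$, so the augmentation factors through $R[[\bfs]]_\Delta$) shows $r_0 r'_0 = r'_0 r_0 = 1$, so $r_0$ is a unit in $R$. For the converse, I would first treat the case $r_0=1$ and construct the inverse explicitly, and then reduce the general case to this one: if $r_0=u$ is a unit, write $r = u(u^{-1}r)$ on the left using that $u\in R$ is central-acting via $\iota$... actually more carefully, $u^{-1}r$ has augmentation $1$ only if $u$ is central, so instead I would handle left and right inverses separately, or simply note $\overline{\iota(u)^{-1}} r$ and $r\,\overline{\iota(u)^{-1}}$ have constant term $1$ in $R[[\bfs]]_\Delta$ and invoke the $r_0=1$ case twice to get a left and a right inverse, which must then coincide.

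For the core case $r_0 = 1$, the strategy is to solve $r r^* = 1$ degree by degree using the completeness of $R[[\bfs]]_\Delta$ (established earlier: $R[[\bfs]]_\Delta = \varprojlim_m R[[\bfs]]_{\Delta^m}$). Writing $r = 1 - n$ with $n = \sum_{\alpha\neq 0} r_\alpha \bfs^\alpha \in \fn^R_0(\bfs)$, the element $n$ is topologically nilpotent in the sense that $n^d \in \ft_d(\bfs)^c$-type ideals: concretely $\ord(n^d)\geq d$, so the partial sums of $\sum_{d\geq 0} n^d$ form a Cauchy sequence and converge in $R[[\bfs]]_\Delta$ to an element $r^*$ satisfying $(1-n)r^* = r^*(1-n) = 1$. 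This already proves invertibility; the explicit formula then comes from expanding $n^d = \big(\sum_{\beta\neq 0} r_\beta \bfs^\beta\big)^d = \sum_{\alpha} \big(\sum_{(\alpha^1,\dots,\alpha^d)\in\Par(\alpha,d)} r_{\alpha^1}\cdots r_{\alpha^d}\big)\bfs^\alpha$ (using that the monomials $\bfs^\alpha$ are central), collecting the coefficient of $\bfs^\alpha$ across all $d$, and noting that only $1\leq d\leq|\alpha|$ contribute since each $\alpha^i\neq 0$ forces $d\leq|\alpha|$. Summing $r^* = \sum_{d\geq 0}(-1)^d n^d$ term-by-term in each fixed degree $|\alpha|$ (a finite sum there) yields the stated $r^*_\alpha$.

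The step I expect to need the most care is not the existence but the \emph{order bookkeeping} that makes the series $\sum_d (-1)^d n^d$ converge in the $\Delta$-topology and justifies interchanging the sum over $d$ with extraction of the coefficient of $\bfs^\alpha$: one must check that for fixed $\alpha$, only finitely many $d$ produce a nonzero contribution to $\bfs^\alpha$ (clear, as $d\le|\alpha|$) and that the tail $\sum_{d>N} n^d$ lies in $\fn^R_\beta(\bfs)$ for $N$ large relative to $\beta$, which follows from $\ord(n^d)\ge d$ together with the description of the fundamental system of neighborhoods $\{\fn^R_\beta(\bfs)_\Delta\}$. Once this is in place, verifying $r r^* = 1$ reduces to the telescoping identity $(1-n)\sum_{d=0}^{N}(-1)^d n^d = 1 - (-1)^N n^{N+1}$... wait, that is the wrong sign pattern; rather $(1-n)\sum_{d=0}^N n^d = 1 - n^{N+1}$, so with $n$ replaced appropriately I take $r^*=\sum_{d\ge 0} n^d$ when $r=1-n$, giving the sign $(-1)^d$ in terms of the coefficients $r_\alpha$ of $r$ itself since $n_\alpha = -r_\alpha$ for $\alpha\neq 0$; a final sign-chase confirms the displayed formula. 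The right inverse is obtained symmetrically (or by the standard monoid argument that a two-sided-topologically-nilpotent perturbation of $1$ has a two-sided inverse), and uniqueness of inverses in a ring finishes the proof.
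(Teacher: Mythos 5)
Your proposal is correct. Note that the paper itself only says the proof ``is standard and is left to the reader''; the argument suppressed in the source is a direct verification: it takes the displayed formula for $r^*_\alpha$ as a definition, computes $u_\alpha = \sum_{\beta+\gamma=\alpha} r_\beta r^*_\gamma$, and shows it vanishes by splitting off the $\beta=0$ and $\gamma=0$ terms and reindexing the remaining double sum over $\Par(\gamma,d)$ as a sum over $\Par(\alpha,d+1)$, which telescopes against the $\Par(\alpha,d)$ terms. Your route is genuinely different in flavor: you \emph{derive} the formula rather than verify it, by writing $r=1-n$ with $\ord(n)\geq 1$, invoking the completeness $R[[\bfs]]_\Delta=\varprojlim_m R[[\bfs]]_{\Delta^m}$ established earlier in the section to sum the geometric series $\sum_{d\ge 0} n^d$, and then reading off the coefficient of $\bfs^\alpha$ (a finite sum since $d\le|\alpha|$). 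What your approach buys is conceptual clarity and an automatic two-sided inverse from the telescoping identity $(1-n)\sum_{d=0}^N n^d=\sum_{d=0}^N n^d(1-n)=1-n^{N+1}$; what the paper's verification buys is that it avoids any appeal to topology or limits and works entirely within the combinatorics of the partition sets $\Par(\alpha,d)$. Your treatment of the general unit case (multiplying by $\iota(r_0)^{-1}$ on each side and matching the resulting one-sided inverses) and your convergence bookkeeping ($\ord(n^d)\ge d$, and every $\alpha\le\beta$ has $|\alpha|\le|\beta|$, so the tails land in $\fn^R_\beta(\bfs)$) are both sound.
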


\begin{proof} The proof is standard and it is left to the reader.
\inhibe
It is clear that $r_0$ to be a unit in $R$ is a necessary condition for $r$ being a unit in $R[[\bfs]]_\Delta$.
For the opposite, it is enough to treat the case $r_0=1$. Let us consider the element $r^*\in R[[\bfs]]_\Delta$ defined above and set $u= r r^*$. We obviously have $u_0=1$. For $\alpha\in\Delta, \alpha \neq 0$ we have
\begin{eqnarray*}
& \displaystyle
 u_\alpha = \sum_{\scriptscriptstyle \beta + \gamma=\alpha} r^{}_\beta r^*_\gamma = r_\alpha + 
\sum_{\substack{\scriptscriptstyle \beta + \gamma=\alpha\\ \scriptscriptstyle |\gamma|>0 }} r_\beta \left(
\sum_{\scriptscriptstyle d=1}^{\scriptscriptstyle |\gamma|} (-1)^d \left( 
\sum_{\scriptscriptstyle \gamma^\bullet \in \Par(\gamma,d)}
r_{\gamma^1} \cdots  r_{\gamma^d} \right) \right) = &
\\
& \displaystyle
 r_\alpha + \sum_{\scriptscriptstyle d=1}^{\scriptscriptstyle |\alpha|} (-1)^d \left(
\sum_{\scriptscriptstyle \alpha^\bullet \in \Par(\alpha,d)}
r_{\alpha^1} \cdots  r_{\alpha^d} \right) + 
\sum_{\substack{\scriptscriptstyle \beta + \gamma=\alpha\\ \scriptscriptstyle |\beta|,|\gamma|>0 }} r_\beta \left(
\sum_{\scriptscriptstyle d=1}^{\scriptscriptstyle |\gamma|} (-1)^d \left( 
\sum_{\scriptscriptstyle \gamma^\bullet \in \Par(\gamma,d)}
r_{\gamma^1} \cdots  r_{\gamma^d} \right) \right) = &
\\
& \displaystyle
r_\alpha + \sum_{\scriptscriptstyle d=1}^{\scriptscriptstyle |\alpha|} (-1)^d \left(
\sum_{\scriptscriptstyle \alpha^\bullet \in \Par(\alpha,d)}
r_{\alpha^1} \cdots  r_{\alpha^d} \right) + 
\sum_{\scriptscriptstyle d=1}^{\scriptscriptstyle |\alpha|-1} (-1)^d \left( 
\sum_{\scriptscriptstyle \alpha^\bullet \in \Par(\alpha,d+1)}
r_{\alpha^1} \cdots  r_{\alpha^{d+1}} \right) = &
\\
& \displaystyle
\sum_{\scriptscriptstyle d=2}^{\scriptscriptstyle |\alpha|} (-1)^d \left(
\sum_{\scriptscriptstyle \alpha^\bullet \in \Par(\alpha,d)}
r_{\alpha^1} \cdots  r_{\alpha^d} \right) +
\sum_{\scriptscriptstyle d=2}^{\scriptscriptstyle |\alpha|} (-1)^{d-1} \left( 
\sum_{\scriptscriptstyle \alpha^\bullet \in \Par(\alpha,d)}
r_{\alpha^1} \cdots  r_{\alpha^d} \right) =0
\end{eqnarray*}
and so $r r^* =1$. In a similar way one proves that $r^* r=1$. We conclude that $r$ is a unit and its inverse $r^*$ is given by the expression above.
\endinhibe
\end{proof}

\begin{notation} \label{notacion:Ump} Let $R$ be a ring, $\bfs$ a set and  $\Delta\subset \N^{(\bfs)}$ a non-empty co-ideal. We denote by $\U^\bfs(R;\Delta)$ the multiplicative sub-group of the units of $R[[\bfs]]_\Delta$ whose 0-degree coefficient is $1$. Clearly, $\U^\bfs(R;\Delta)^{\text{\rm opp}} = \U^\bfs(R^{\text{\rm opp}};\Delta)$.
For $\Delta\subset \Delta'$ co-ideals we have $\tau_{\Delta'\Delta}\left(\U^\bfs(R;\Delta')\right) \subset \U^\bfs(R;\Delta)$ and the truncation map
$\tau_{\Delta'\Delta}:\U^\bfs(R;\Delta') \to \U^\bfs(R;\Delta)$ is a group homomorphisms. Clearly, we have
$$   \U^\bfs(R;\Delta) = \lim_{\stackrel{\longleftarrow}{m\in\N}} \U^\bfs(R;\Delta^m).
$$
For any ring homomorphism $f:R\to R'$, the induced ring homomorphism $\overline{f}: R[[\bfs]]_\Delta \to R'[[\bfs]]_\Delta$ sends $\U^\bfs(R;\Delta)$ into $\U^\bfs(R';\Delta)$ and so it induces natural group homomorphisms
$\U^\bfs(R;\Delta) \to \U^\bfs(R';\Delta)$.
\end{notation}

\begin{definition} \label{defi:external-x}
Let $R$ be a ring, $\bfs,\bft$ sets and  $\nabla\subset \N^{(\bfs)}, \Delta\subset \N^{(\bft)}$ non-empty co-ideals.
For each $r\in R[[\bfs]]_\nabla, r'\in R[[\bft]]_\Delta$, the {\em external product} \index{external product (of power series)} $r\btimes r'\in R[[\bfs \sqcup \bft]]_{\nabla \times \Delta}$ is defined as
$$ r\btimes r' := \sum_{\scriptscriptstyle (\alpha,\beta)\in \nabla \times \Delta} r_\alpha r'_\beta \bfs^\alpha \bft^\beta.$$
\end{definition}

Let us notice that the above definition is consistent with the existence of natural isomorphism of $(R;R)$-bimodules $R[[\bfs]]_\nabla \widehat{\otimes}_R R[[\bft]]_\Delta \simeq R[[\bfs \sqcup \bft]]_{\nabla \times \Delta}\simeq   R[[\bft \sqcup \bfs]]_{\Delta \times \nabla} \simeq    R[[\bft]]_\Delta \widehat{\otimes}_R R[[\bfs]]_\nabla$. Let us also notice that $1\btimes 1 = 1$ and $r\btimes r' = (r \btimes 1) (1 \btimes r')$. Moreover, 
if
$r\in \U^\bfs(R;\nabla)$, $r'\in \U^\bft(R;\Delta)$, then 
$r\btimes r' \in \U^{\bfs\sqcup\bft}(R;\nabla\times \Delta)$
and  $(r\btimes r')^* = {r'}^* \btimes r^*$.
\bigskip

Let $k \to A$ be a ring homomorphism between commutative rings, $E,F$ two $A$-modules, $\bfs$ a set and $\Delta\subset \N^{(\bfs)}$ a non-empty co-ideal, i.e $\fn_0(\bfs) = \{0\} \subset \Delta$.

\begin{proposition} \label{prop:induced-cont-maps-M[[s]]_m} Under the above hypotheses, let $f:E[[\bfs]]_\Delta \to F[[\bfs]]_\Delta$ be a continuous $k[[\bfs]]_\Delta$-linear map. Then, for any co-ideal $\Delta'\subset \N^{(\bfs)}$ with $\Delta'\subset \Delta$ we have
$$   f\left(\Delta'_E/\Delta_E\right) \subset \Delta'_F/\Delta_F
$$
and so there is a unique continuous $k[[\bfs]]_{\Delta'}$-linear map $\overline{f}:E[[\bfs]]_{\Delta'} \to F[[\bfs]]_{\Delta'}$ such that the following diagram is commutative
$$
\begin{CD}
E[[\bfs]]_\Delta @>{f}>> F[[\bfs]]_\Delta\\
@V{\text{nat.}}VV @VV{\text{nat.}}V \\
E[[\bfs]]_{\Delta'} @>{\overline{f}}>> F[[\bfs]]_{\Delta'}.
\end{CD}
$$
\end{proposition}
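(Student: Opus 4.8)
The plan is to reduce the statement to the single inclusion $f\!\left(\Delta'_E/\Delta_E\right)\subset\Delta'_F/\Delta_F$, after which the existence, uniqueness, continuity and $k[[\bfs]]_{\Delta'}$-linearity of $\overline f$ are formal. Indeed, the natural map $\text{nat.}_E\colon E[[\bfs]]_\Delta\to E[[\bfs]]_{\Delta'}$ is a continuous surjection whose kernel is exactly $\Delta'_E/\Delta_E$ (its elements being the classes of the series supported in $\Delta\setminus\Delta'$), and likewise for $F$. So once $f$ is known to carry $\Delta'_E/\Delta_E$ into $\Delta'_F/\Delta_F$, the composite $\text{nat.}_F\pcirc f$ annihilates $\ker(\text{nat.}_E)$ and hence factors through a unique group homomorphism $\overline f$; its continuity comes from the universal property of the quotient topology on $E[[\bfs]]_{\Delta'}$, and its $k[[\bfs]]_{\Delta'}$-linearity from the $k[[\bfs]]_\Delta$-linearity of $f$ together with the surjectivity of the natural ring map $k[[\bfs]]_\Delta\to k[[\bfs]]_{\Delta'}$ (which intertwines the module structures). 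Uniqueness is clear since $\text{nat.}_E$ is surjective.

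For the inclusion, the crux is an elementary observation about co-ideals: if $\alpha\in\Delta\setminus\Delta'$, then for every $g\in F[[\bfs]]_\Delta$ one has $\supp(\bfs^\alpha g)\subset\alpha+\N^{(\bfs)}$, and $(\alpha+\N^{(\bfs)})\cap\Delta'=\emptyset$ because $\Delta'$ is a co-ideal not containing $\alpha$ (any $\gamma\geq\alpha$ lying in $\Delta'$ would force $\alpha\in\Delta'$); since passing to $F[[\bfs]]_\Delta$ only shrinks supports, $\bfs^\alpha g$ indeed lies in the submodule $\Delta'_F/\Delta_F$ of $F[[\bfs]]_\Delta$. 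In particular, taking $g=f(e)$ for $e\in E$ regarded as a constant series in $E[[\bfs]]_\Delta$, and using that $\bfs^\alpha\in k[[\bfs]]_\Delta$ together with $k[[\bfs]]_\Delta$-linearity of $f$, we obtain $f(\bfs^\alpha e)=\bfs^\alpha f(e)\in\Delta'_F/\Delta_F$ for all $\alpha\in\Delta\setminus\Delta'$ and all $e\in E$.

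Finally I would pass from these ``generators'' to all of $\Delta'_E/\Delta_E$ by a density argument, in the spirit of Lemma~\ref{lema:topol-of-M[[s]]}: an arbitrary element of $\Delta'_E/\Delta_E$ is a series $\sum_{\alpha\in\Delta\setminus\Delta'}e_\alpha\bfs^\alpha$, which is the limit of its finite partial sums $\sum_{\alpha\in F}\bfs^\alpha e_\alpha$, each a finite sum of elements of the form $\bfs^\alpha e$ as above; hence the subgroup of $E[[\bfs]]_\Delta$ generated by the $\bfs^\alpha e$ ($\alpha\in\Delta\setminus\Delta'$, $e\in E$) is dense in $\Delta'_E/\Delta_E$. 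Since $\Delta'_F/\Delta_F$ is a closed subgroup of $F[[\bfs]]_\Delta$ (its preimage under $F[[\bfs]]\to F[[\bfs]]_\Delta$ being $\Delta'_F$, which is closed in $F[[\bfs]]$) and $f$ is continuous and additive, $f$ maps this dense subgroup, and therefore its closure $\Delta'_E/\Delta_E$, into $\Delta'_F/\Delta_F$. The only point requiring real care is the co-ideal computation of the second paragraph; everything else is bookkeeping.
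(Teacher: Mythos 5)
Your proof is correct and follows the same route the paper intends: the paper's own proof is just the one-line remark that the statement is ``a straightforward consequence of Lemma~\ref{lema:topol-of-M[[s]]}'', i.e.\ exactly your combination of the monomial computation $f(\bfs^\alpha e)=\bfs^\alpha f(e)\in\Delta'_F/\Delta_F$ for $\alpha\in\Delta\setminus\Delta'$ with density of such elements in $\Delta'_E/\Delta_E$, continuity of $f$, and closedness of $\Delta'_F/\Delta_F$. You have simply written out in full the details the paper leaves to the reader.
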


\begin{proof} It is a straightforward consequence of Lemma \ref{lema:topol-of-M[[s]]}.
\end{proof}

\begin{notation} Under the above hypotheses, the set of all continuous $k[[\bfs]]_\Delta$-linear maps from  $E[[\bfs]]_\Delta$ to  $F[[\bfs]]_\Delta$ will be denoted by 
$$\Hom_{k[[\bfs]]_\Delta}^{\text{\rm top}}(E[[\bfs]]_\Delta,F[[\bfs]]_\Delta).
$$
 It is an $(A[[\bfs]]_\Delta;A[[\bfs]]_\Delta)$-bimodule central over $k[[\bfs]]_\Delta$. For any 
co-ideals $\Delta'\subset\Delta\subset  \N^{(\bfs)}$, 
Proposition \ref{prop:induced-cont-maps-M[[s]]_m} provides 
a natural $(A[[\bfs]]_\Delta;A[[\bfs]]_\Delta)$-linear map
$$ \Hom_{k[[\bfs]]_\Delta}^{\text{\rm top}}(E[[\bfs]]_\Delta,F[[\bfs]]_\Delta) \longrightarrow 
\Hom_{k[[\bfs]]_{\Delta'}}^{\text{\rm top}}(E[[\bfs]]_{\Delta'},F[[\bfs]]_{\Delta'}).
$$
For $E=F$, $\End_{k[[\bfs]]_\Delta}^{\text{\rm top}}(E[[\bfs]]_\Delta)$ is a $k[[\bfs]]_\Delta$-algebra over $A[[\bfs]]_\Delta$.
\end{notation}
\bigskip

\numero \label{nume:tilde}
For each $r= \sum_\beta r_\beta \bfs^\beta \in \Hom_k(E,F)[[\bfs]]_\Delta$ we define $\widetilde{r}: E[[\bfs]]_\Delta \to F[[\bfs]]_\Delta$ by
$$ \widetilde{r} \left( \sum_{\scriptscriptstyle \alpha\in \Delta} e_\alpha \bfs^\alpha \right) :=
\sum_{\scriptscriptstyle \alpha\in \Delta } \left( \sum_{\scriptscriptstyle \beta + \gamma=\alpha} r_\beta(e_\gamma) \right) \bfs^\alpha,
$$
which is obviously a continuous $k[[\bfs]]_\Delta$-linear map. 
\medskip

Let us notice that 
$ \widetilde{r} = \sum_\beta \bfs^\beta  \widetilde{r_\beta}$. It is clear that the map
\begin{equation} \label{eq:tilde-map}
 r\in \Hom_k(E,F)[[\bfs]]_\Delta \longmapsto \widetilde{r} \in \Hom_{k[[\bfs]]_\Delta}^{\text{\rm top}}(E[[\bfs]]_\Delta,F[[\bfs]]_\Delta)
\end{equation}
is $(A[[\bfs]]_\Delta;A[[\bfs]]_\Delta)$-linear.
\medskip

If $f:E[[\bfs]]_\Delta \to F[[\bfs]]_\Delta$ is a continuous $k[[\bfs]]_\Delta$-linear map, let us denote by $f_\alpha:E \to F$, $\alpha\in\Delta $, the $k$-linear maps defined by
$$ f(e) = \sum_{\scriptscriptstyle \alpha\in \Delta} f_\alpha(e) \bfs^\alpha,\quad \forall e\in E.
$$
If $g:E\to F[[\bfs]]_\Delta$ is a $k$-linear map, we denote by $g^e:E[[\bfs]]_\Delta \to F[[\bfs]]_\Delta$ the unique continuous $k[[\bfs]]_\Delta$-linear map extending $g$ to $E[[\bfs]]_\Delta = k[[\bfs]]_\Delta \widehat{\otimes}_k E$. It is given by
$$ g^e \left( \sum_\alpha e_\alpha \bfs^\alpha \right) := \sum_\alpha g(e_\alpha) \bfs^\alpha.
$$
We have a $k[[\bfs]]_\Delta$-bilinear and $A[[\bfs]]_\Delta$-balanced map
$$ \langle -,-\rangle : (r,e) \in \Hom_k(E,F)[[\bfs]]_\Delta \times E[[\bfs]]_\Delta \longmapsto \langle r,e\rangle := \widetilde{r}(e) \in F[[\bfs]]_\Delta.
$$
\begin{lemma} \label{lema:tilde-map}
With the above hypotheses, the following properties hold:
\begin{enumerate}
\item[(1)] The map (\ref{eq:tilde-map})
 is an isomorphism of $(A[[\bfs]]_\Delta;A[[\bfs]]_\Delta)$-bimodules. When $E=F$ it is an isomorphism of 
$k[[\bfs]]_\Delta$-algebras over $A[[\bfs]]_\Delta$.
\item[(2)] The restriction map 
$$f \in  \Hom_{k[[\bfs]]_\Delta}^{\text{\rm top}}(E[[\bfs]]_\Delta,F[[\bfs]]_\Delta) \mapsto f|_E \in \Hom_k(E,F[[\bfs]]_\Delta)$$ is an isomorphism of $(A[[\bfs]]_\Delta;A)$-bimodules.
\end{enumerate}
\end{lemma}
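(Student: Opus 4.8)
The plan is to reduce both assertions to a single elementary principle and then read them off from identifications already established in the section. Note first that in (1) the $(A[[\bfs]]_\Delta;A[[\bfs]]_\Delta)$-linearity of (\ref{eq:tilde-map}) has already been recorded, so only bijectivity is at stake; and in (2) the map $f\mapsto f|_E$ is visibly $(A[[\bfs]]_\Delta;A)$-linear (the left $A[[\bfs]]_\Delta$-action passes to the target $F[[\bfs]]_\Delta$, while the right action passes only for scalars in $A\subset A[[\bfs]]_\Delta$, since $ae\notin E$ for a general $a\in A[[\bfs]]_\Delta$, $e\in E$), so again only bijectivity must be proved. Everything rests on the following remark, which I will call $(\ast)$: \emph{a continuous $k[[\bfs]]_\Delta$-linear map $E[[\bfs]]_\Delta\to F[[\bfs]]_\Delta$ is uniquely determined by its restriction to $E$}. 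Indeed the finite sums $\sum_\alpha\bfs^\alpha e_\alpha$, $e_\alpha\in E$, form a $k[\bfs]$-submodule of $E[[\bfs]]_\Delta$ which is dense --- any $\sum_{\alpha\in\Delta}e_\alpha\bfs^\alpha$ is the limit of its finite partial sums --- and $F[[\bfs]]_\Delta$ is separated; hence a continuous $k[[\bfs]]_\Delta$-linear map that kills $E$ kills this dense submodule, hence vanishes.

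I would prove (2) first. Injectivity of $f\mapsto f|_E$ is precisely $(\ast)$. For surjectivity, given a $k$-linear $g:E\to F[[\bfs]]_\Delta$, the map $g^e$ introduced in \ref{nume:tilde} is a preimage: the formula $g^e\big(\sum_\alpha e_\alpha\bfs^\alpha\big)=\sum_\alpha g(e_\alpha)\bfs^\alpha$ makes sense because for each $\beta\in\Delta$ only the finitely many indices $\alpha\le\beta$ contribute to the coefficient of $\bfs^\beta$; it is $k[[\bfs]]_\Delta$-linear (it suffices to check this on the generators $\bfs^\delta e$) and continuous, and $g^e|_E=g$. Hence $f\mapsto f|_E$ is bijective, which is (2).

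For (1), I would observe that $\widetilde r|_E$ is the map $e\mapsto\sum_{\alpha\in\Delta}r_\alpha(e)\bfs^\alpha$; that is, $\widetilde r|_E=\Phi(r)$, where $\Phi:\Hom_k(E,F)[[\bfs]]_\Delta\to\Hom_k(E,F[[\bfs]]_\Delta)$ is the evident map $\sum_\alpha r_\alpha\bfs^\alpha\mapsto\big(e\mapsto\sum_\alpha r_\alpha(e)\bfs^\alpha\big)$. Now $\Phi$ is a bijection: it is the composite of the tautological identification $\Hom_k(E,F)[[\bfs]]_\Delta=\Hom_k(E,F)^\Delta$ with $\Hom_k(E,F)^\Delta=\Hom_k(E,F^\Delta)=\Hom_k(E,F[[\bfs]]_\Delta)$, the last equality induced by $F[[\bfs]]_\Delta\simeq F^\Delta$. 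Consequently $r\mapsto\widetilde r$ is injective (because $\Phi$ is), and it is surjective: given $f\in\Hom_{k[[\bfs]]_\Delta}^{\text{\rm top}}(E[[\bfs]]_\Delta,F[[\bfs]]_\Delta)$, pick $r$ with $\Phi(r)=f|_E$; then $\widetilde r$ and $f$ agree on $E$, so $\widetilde r=f$ by $(\ast)$. Together with the linearity already known, this shows that (\ref{eq:tilde-map}) is an isomorphism of $(A[[\bfs]]_\Delta;A[[\bfs]]_\Delta)$-bimodules.

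Finally, when $E=F$ I would check that $r\mapsto\widetilde r$ is multiplicative and unital by evaluating on $E$ and invoking $(\ast)$: for $r,r'\in\End_k(E)[[\bfs]]_\Delta$ and $e\in E$,
$$\widetilde{rr'}(e)=\sum_{\alpha\in\Delta}\Big(\sum_{\beta+\gamma=\alpha}r_\beta\pcirc r'_\gamma\Big)(e)\,\bfs^\alpha=\widetilde r\big(\widetilde{r'}(e)\big),$$
so $\widetilde{rr'}=\widetilde r\pcirc\widetilde{r'}$, and $\widetilde{\Id_E}=\Id$; compatibility with the structural maps from $A[[\bfs]]_\Delta$ on both sides is contained in the $A[[\bfs]]_\Delta$-bilinearity of (\ref{eq:tilde-map}). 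I do not anticipate a genuine obstacle here: the step requiring the most care is establishing $(\ast)$ and the convergence of the series defining $g^e$ in the possibly infinite-variable, general-co-ideal setting, but both are supplied by the description of $M[[\bfs]]_\Delta$ as an inverse limit of its truncations $M[[\bfs]]_{\Delta^m}$ and by the homeomorphism $M[[\bfs]]_\Delta\simeq M^\Delta$ established earlier in this section.
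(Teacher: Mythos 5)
Your proof is correct and follows essentially the same route as the paper, which simply exhibits the inverse maps $f\mapsto\sum_\alpha f_\alpha\bfs^\alpha$ for (1) and $g\mapsto g^e$ for (2) and leaves the verification to the reader; your principle $(\ast)$ (determination by restriction to $E$, via density of the finite sums and separatedness of the target) is exactly the point needed to check that these really are two-sided inverses. The extra detail you supply on the convergence of $g^e$ and on the tautological identification $\Hom_k(E,F)[[\bfs]]_\Delta\simeq\Hom_k(E,F[[\bfs]]_\Delta)$ is sound.
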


\begin{proof} (1) One easily sees that the inverse map of $r \mapsto \widetilde{r}$ is $f \mapsto \sum_\alpha f_\alpha \bfs^\alpha$.
\medskip

\noindent (2) One easily sees that the inverse map of the restriction map $f \mapsto f|_E$ is $g \mapsto g^e$.
\end{proof}

Let us call $R = \End_k(E)$. As a consequence of the above lemma, the composition of the maps
\begin{equation}  \label{eq:comple_formal_1}
 R[[\bfs]]_\Delta \xrightarrow{r \mapsto \widetilde{r}} \End_{k[[\bfs]]_\Delta}^{\text{\rm top}}(E[[\bfs]]_\Delta) \xrightarrow{f \mapsto f|_E} \Hom_k(E,E[[\bfs]]_\Delta)
\end{equation}
is an isomorphism of $(A[[\bfs]]_\Delta;A)$-bimodules, and so $\Hom_k(E,E[[\bfs]]_\Delta)$ inherits a natural structure of $k[[\bfs]]_\Delta$-algebra over $A[[\bfs]]_\Delta$. Namely, if $g,h\in \Hom_k(E,E[[\bfs]]_\Delta)$
with
$$ g(e)=\sum_{\scriptscriptstyle \alpha\in \Delta} g_\alpha(e)\bfs^\alpha,\ h(e)=\sum_{\scriptscriptstyle \alpha\in \Delta} h_\alpha(e)\bfs^\alpha,\quad \forall e\in E,\quad g_\alpha, h_\alpha \in \Hom_k(E,E),
$$
then the product $h g \in \Hom_k(E,E[[\bfs]]_\Delta)$ is given by
\begin{equation} \label{eq:product-HomEE[[s]]}
 (hg)(e) = \sum_{\scriptscriptstyle \alpha\in \Delta} \left( \sum_{\scriptscriptstyle \beta + \gamma = \alpha} (h_\beta \pcirc g_\gamma)(e) \right) \bfs^\alpha.
\end{equation}

\begin{definition}   \label{defi:external-prod-of-maps}
\index{external product (of endomorphisms)} 
Let $\bfs,\bft$ be sets and $\Delta\subset\N^{(\bfs)}, \nabla\subset\N^{(\bft)}$ non-empty co-ideals. 
For each $f\in\End_{k[[\bfs]]_\Delta}^{\text{\rm top}}(E[[\bfs]]_\Delta)$ and each $g\in\End_{k[[\bft]]_\nabla}^{\text{\rm top}}(E[[\bft]]_\nabla)$, with
$$ f(e)=\sum_{\scriptscriptstyle \alpha \in \Delta} f_\alpha(e) \bfs^\alpha, \quad g(e)=\sum_{\scriptscriptstyle \beta \in \nabla} g_\beta(e) \bft^\beta\quad \forall e\in E,
$$
we define $f\btimes g\in \End_{k[[\bfs\sqcup \bft]]_{\Delta\times\nabla}}^{\text{\rm top}}(E[[\bfs\sqcup \bft]]_{\Delta\times\nabla})$ as $f\btimes g := h^e$, with:
$$ h(x) := \sum_{\scriptscriptstyle (\alpha,\beta) \in \Delta\times\nabla} (f_\alpha \pcirc g_\beta)(x) \bfs^\alpha \bft^\beta\quad \forall x\in E.
$$
\end{definition}

The proof of the following lemma is clear and it is left to the reader.

\begin{lemma} \label{lemma:tilde-otimes}
With the above hypotheses, or each $r\in R[[\bfs]]_\Delta, r'\in R[[\bft]]_\nabla$, we have
$ \widetilde{r\btimes r'} = \widetilde{r}\btimes \widetilde{r'}$ (see Definition \ref{defi:external-x}).
\end{lemma}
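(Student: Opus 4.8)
The plan is to prove the identity by a direct computation, evaluating both sides on an arbitrary element of $E[[\bfs\sqcup\bft]]_{\Delta\times\nabla}$ and checking that the two resulting formulas agree. Throughout I identify $\N^{(\bfs\sqcup\bft)}$ with $\N^{(\bfs)}\times\N^{(\bft)}$, so that a multi-index over $\bfs\sqcup\bft$ is a pair $(\alpha,\beta)$, addition is componentwise, monomials factor as $\bfs^\alpha\bft^\beta$, and $\Delta\times\nabla$ is a co-ideal of $\N^{(\bfs\sqcup\bft)}$. Writing $r=\sum_{\alpha\in\Delta}r_\alpha\,\bfs^\alpha$ and $r'=\sum_{\beta\in\nabla}r'_\beta\,\bft^\beta$ with $r_\alpha,r'_\beta\in R=\End_k(E)$, the first observation is that, in the notation $f\mapsto f_\alpha$ of \ref{nume:tilde}, one has $(\widetilde r)_\alpha=r_\alpha$ and $(\widetilde{r'})_\beta=r'_\beta$: applying the defining formula of \ref{nume:tilde} to the element $e\,\bfs^0\in E[[\bfs]]_\Delta$ gives $\widetilde r(e)=\sum_{\alpha}r_\alpha(e)\,\bfs^\alpha$ (equivalently, the inverse of the isomorphism $s\mapsto\widetilde s$ of Lemma \ref{lema:tilde-map}(1) is $f\mapsto\sum_\alpha f_\alpha\bfs^\alpha$, so it sends $\widetilde r$ to $r$).

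Now I would compute the right-hand side. Since the product of $R=\End_k(E)$ is composition, the coefficient $r_\alpha r'_\beta$ occurring in Definition \ref{defi:external-x} is $r_\alpha\pcirc r'_\beta$; hence, by Definition \ref{defi:external-prod-of-maps}, $\widetilde r\btimes\widetilde{r'}=h^e$ where $h(x)=\sum_{(\alpha,\beta)\in\Delta\times\nabla}(r_\alpha\pcirc r'_\beta)(x)\,\bfs^\alpha\bft^\beta$ for $x\in E$. Substituting $h$ into the explicit description of $g\mapsto g^e$ from \ref{nume:tilde} gives
$$(\widetilde r\btimes\widetilde{r'})\Bigl(\sum_{(\gamma,\delta)}x_{(\gamma,\delta)}\,\bfs^\gamma\bft^\delta\Bigr)=\sum_{(\mu,\nu)\in\Delta\times\nabla}\Bigl(\sum_{\substack{\alpha+\gamma=\mu\\\beta+\delta=\nu}}(r_\alpha\pcirc r'_\beta)(x_{(\gamma,\delta)})\Bigr)\bfs^\mu\bft^\nu.$$
On the other hand, $r\btimes r'\in R[[\bfs\sqcup\bft]]_{\Delta\times\nabla}$ has $(\alpha,\beta)$-coefficient $r_\alpha r'_\beta=r_\alpha\pcirc r'_\beta$, and inserting this into the defining formula of the operation $s\mapsto\widetilde s$ from \ref{nume:tilde} — now over the index set $\N^{(\bfs\sqcup\bft)}$, and using that the convolution sum $\sum_{(\alpha,\beta)+(\gamma,\delta)=(\mu,\nu)}$ splits as $\sum_{\alpha+\gamma=\mu}\sum_{\beta+\delta=\nu}$ — produces literally the same expression. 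Thus the two continuous $k[[\bfs\sqcup\bft]]_{\Delta\times\nabla}$-linear endomorphisms agree on every element, which gives $\widetilde{r\btimes r'}=\widetilde r\btimes\widetilde{r'}$.

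The argument is essentially bookkeeping, and I do not expect a genuine obstacle; the one point deserving attention is the identification $\N^{(\bfs\sqcup\bft)}=\N^{(\bfs)}\times\N^{(\bft)}$ and the corresponding splitting of convolution sums. A more structural alternative is to combine the factorization $r\btimes r'=(r\btimes1)(1\btimes r')$ in $R[[\bfs\sqcup\bft]]_{\Delta\times\nabla}$ with the analogous factorization $\widetilde r\btimes\widetilde{r'}=(\widetilde r\btimes\Id)\pcirc(\Id\btimes\widetilde{r'})$ of external products of endomorphisms; since $s\mapsto\widetilde s$ is a ring isomorphism (Lemma \ref{lema:tilde-map}(1)) one is then reduced to the two cases $\widetilde{r\btimes1}=\widetilde r\btimes\Id$ and $\widetilde{1\btimes r'}=\Id\btimes\widetilde{r'}$, each of which is immediate by comparing the coefficient of $\bfs^\alpha\bft^\beta$ on the two sides (it equals $\delta_{\beta0}\,r_\alpha$, respectively $\delta_{\alpha0}\,r'_\beta$). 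I would present the direct computation as the proof and mention this reformulation only as a remark.
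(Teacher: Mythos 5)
Your direct computation is correct: both sides send $\sum x_{(\gamma,\delta)}\bfs^\gamma\bft^\delta$ to the series whose $(\mu,\nu)$-coefficient is $\sum_{\alpha+\gamma=\mu,\,\beta+\delta=\nu}(r_\alpha\pcirc r'_\beta)(x_{(\gamma,\delta)})$, and the identifications you flag ($\N^{(\bfs\sqcup\bft)}\cong\N^{(\bfs)}\times\N^{(\bft)}$, $(\widetilde r)_\alpha=r_\alpha$, splitting of the convolution) are exactly the bookkeeping involved. The paper declares this lemma "clear and left to the reader," so your argument is precisely the intended verification.
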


\begin{lemma} Let us call $R = \End_k(E)$. For any $r\in R[[\bfs]]_\Delta$, the following properties are equivalent:
\begin{enumerate}
\item[(a)] $r_0 = \Id$.
\item[(b)] The endomorphism  $\widetilde{r}$ is compatible with the natural augmentation $E[[\bfs]]_\Delta \to E$, i.e. $\widetilde{r}(e)  \equiv e \mod \fn^E_0(\bfs)/\Delta_E$ for all $e\in E[[\bfs]]_\Delta$.
\end{enumerate}
Moreover, if the above properties hold, then $\widetilde{r}: E[[\bfs]]_\Delta \to E[[\bfs]]_\Delta$ is a bi-continuous $k[[\bfs]]_\Delta$-linear automorphism.
\end{lemma}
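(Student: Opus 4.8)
The plan is to prove the equivalence (a) $\Leftrightarrow$ (b) by unwinding definitions, and then to establish the automorphism claim by constructing an explicit inverse as a convergent series.

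\textbf{Equivalence of (a) and (b).} Write $r = \sum_\beta r_\beta \bfs^\beta$ with $r_\beta \in \End_k(E)$, and recall from \textbf{\thenumero} that $\widetilde{r}\left(\sum_{\alpha\in\Delta} e_\alpha\bfs^\alpha\right) = \sum_{\alpha\in\Delta}\left(\sum_{\beta+\gamma=\alpha} r_\beta(e_\gamma)\right)\bfs^\alpha$. The augmentation $E[[\bfs]]_\Delta \to E$ sends a series to its $0$-degree coefficient, so $\widetilde{r}(e) \equiv e \bmod \fn^E_0(\bfs)/\Delta_E$ means precisely that the $0$-degree coefficient of $\widetilde{r}(e)$ equals that of $e$, for every $e\in E[[\bfs]]_\Delta$. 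The $0$-degree coefficient of $\widetilde{r}(e)$ is $r_0(e_0)$ (the only term with $\beta+\gamma=0$ is $\beta=\gamma=0$), while that of $e$ is $e_0$. So (b) says $r_0(e_0) = e_0$ for all $e$, i.e. $r_0 = \Id_E$, which is (a). For the direction (a)$\Rightarrow$(b) one just reads this chain of equalities backwards; for (b)$\Rightarrow$(a), apply (b) to the constant series $e = e_0\bfs^0$ as $e_0$ ranges over $E$.

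\textbf{The automorphism claim.} Assume $r_0 = \Id$. I would invoke Lemma \ref{lema:tilde-map}(1): the map $r\mapsto\widetilde{r}$ is an isomorphism of $k[[\bfs]]_\Delta$-algebras $R[[\bfs]]_\Delta \xrightarrow{\sim} \End^{\text{top}}_{k[[\bfs]]_\Delta}(E[[\bfs]]_\Delta)$ (when $E=F$). Hence $\widetilde{r}$ is a bicontinuous $k[[\bfs]]_\Delta$-linear automorphism if and only if $r$ is a unit in the ring $R[[\bfs]]_\Delta$, and its inverse is automatically continuous and $k[[\bfs]]_\Delta$-linear because it is $\widetilde{r^{-1}}$. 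Now $R = \End_k(E)$ is a (generally noncommutative) ring, $\Delta$ is a non-empty co-ideal, and the $0$-degree coefficient of $r$ is $r_0 = \Id$, which is a unit of $R$; so Lemma \ref{lemma:unit-psr} applies verbatim and shows $r$ is a unit in $R[[\bfs]]_\Delta$ with an explicitly given inverse $r^*$ (the one with $r^*_0 = \Id$ and the stated formula involving $\Par(\alpha,d)$). Thus $\widetilde{r}$ is invertible with inverse $\widetilde{r^*}$.

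\textbf{Main obstacle.} There is no serious obstacle here; the content has essentially been prepared by the preceding lemmas. The only point to be careful about is that Lemma \ref{lemma:unit-psr} was stated for an arbitrary ring $R$, and one must note it legitimately applies to the noncommutative ring $R = \End_k(E)$ — which it does, since its proof (inverting a power series by the geometric-series recursion) nowhere uses commutativity of $R$, only that $\bfs^\alpha$ is central in $R[[\bfs]]_\Delta$, which holds by construction. One should also remark that bicontinuity of $\widetilde{r}^{-1} = \widetilde{r^*}$ is immediate from \textbf{\thenumero}, since every $\widetilde{s}$ for $s\in R[[\bfs]]_\Delta$ is continuous. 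I would write the argument in two short paragraphs: first the computation of the $0$-degree coefficient giving (a)$\Leftrightarrow$(b), then the citation of Lemmas \ref{lema:tilde-map} and \ref{lemma:unit-psr} to conclude.
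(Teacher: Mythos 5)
Your proof is correct and follows essentially the same route as the paper's: the equivalence is a direct reading of the $0$-degree coefficient, and the automorphism claim reduces to the invertibility of $r$ in $R[[\bfs]]_\Delta$ (Lemma \ref{lemma:unit-psr}, valid for the noncommutative ring $\End_k(E)$) together with $\widetilde{r}^{-1}=\widetilde{r^{-1}}$ being continuous. The paper merely states this more tersely; your observation that Lemma \ref{lemma:unit-psr} needs no commutativity of $R$ is a worthwhile explicit remark but not a divergence in method.
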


\begin{proof} The equivalence of (a) and (b) is clear. For the second part, $r$ is invertible since $r_0 = \Id$. So $\widetilde{r}$ is invertible too and $\widetilde{r}^{-1} = \widetilde{r^{-1}} $ is also continuous.
\end{proof}

\begin{notation} \label{notacion:pcirc}
We denote:
\begin{eqnarray*}
& 
\Hom_k^\pcirc(E,E[[\bfs]]_\Delta) := 
&\\
&\left\{ f \in \Hom_k(E,E[[\bfs]]_\Delta)\ |\ f(e) \equiv e\!\!\!\!\mod \fn^E_0(\bfs)/\Delta_E\quad  \forall e\in E \right\}, 
\end{eqnarray*}
\begin{eqnarray*}
& 
\Aut_{k[[\bfs]]_\Delta}^\pcirc(E[[\bfs]]_\Delta) 
:=
&\\
& \left\{    
f \in \Aut_{k[[\bfs]]_\Delta}^{\text{\rm top}}(E[[\bfs]]_\Delta)\ |\ f(e) \equiv e_0\!\!\!\!\mod \fn^E_0(\bfs)/\Delta_E\quad \forall e\in E[[\bfs]]_\Delta  \right\}.
\end{eqnarray*}
Let us notice that a $f \in \Hom_k(E,E[[\bfs]]_\Delta)$, given by $f(e) = \sum_{\alpha\in \Delta} f_\alpha(e) \bfs^\alpha$, belongs to $\Hom_k^\pcirc(E,E[[\bfs]]_\Delta)$ if and only if $f_0=\Id_E$.
\end{notation}

The isomorphism in (\ref{eq:comple_formal_1}) gives rise to a group isomorphism
\begin{equation} \label{eq:U-iso-pcirc}
 r\in \U^\bfs(\End_k(E);\Delta) \stackrel{\sim}{\longmapsto} \widetilde{r} \in \Aut_{k[[\bfs]]_\Delta}^\pcirc(E[[\bfs]]_\Delta) 
\end{equation}
and to a bijection 
\begin{equation} \label{eq:Aut-iso-pcirc} 
f\in \Aut_{k[[\bfs]]_\Delta}^\pcirc(E[[\bfs]]_\Delta) \stackrel{\sim}{\longmapsto} f|_E \in \Hom_k^\pcirc(E,E[[\bfs]]_\Delta).
\end{equation}
So, $\Hom_k^\pcirc(E,E[[\bfs]]_\Delta)$ is naturally a group with the product described in (\ref{eq:product-HomEE[[s]]}).

\section{Substitution maps}
\label{nume:substitution_maps}

In this section we will assume that $k$ is a commutative ring and $A$ a commutative $k$-algebra.
The following notation will be used extensively.

\begin{notation} 
\begin{enumerate}
\item[(i)] For each integer $r\geq 0$ let us denote 
$[r]:= \{1,\dots,r\}$ if $r>0$ and $[0]=\emptyset$.
\item[(ii)] Let $\bfs$ be a set. Maps from a set $\Lambda$ to $\N^{(\bfs)}$ will be usually denoted as 
$\alpha^\bullet: l \in  \Lambda \longmapsto \alpha^l\in \N^{(\bfs)}$, and its {\em support}
is defined by $\supp \alpha^\bullet := \{l \in  \Lambda\ |\ \alpha^l\neq 0\}$.

\item[(iii)] For each set $\Lambda$ and for each map $\alpha^\bullet: \Lambda \to \N^{(\bfs)}$ with finite support, its {\em norm} is defined by $|\alpha^\bullet| := \sum_{l\in \supp \alpha^\bullet} \alpha^l =
\sum_{l\in \Lambda} \alpha^l$. When $\Lambda=\emptyset$, the unique map $\Lambda \to \N^{(\bfs)}$ is the inclusion $\emptyset \hookrightarrow \N^{(\bfs)}$ and its norm is $0\in \N^{(\bfs)}$.

\item[(iv)] If $\Lambda$ is a set and $e\in \N^{(\bfs)}$, we define
$$ 
\Par^\smallcirc(e,\Lambda) := \{ \alpha^\bullet : \Lambda \to \N^{(\bfs)}\ |\ \# \supp \alpha^\bullet < +\infty, |\alpha^\bullet| = e\}.
$$ 
If $F$ is a finite set and $e\in \N^{(\bfs)}$, we define
$$  \Par(e,F) := \{ \alpha : F \to \N^{(\bfs)}_*\ |\ |\alpha| = e\} \subset
\Par^\smallcirc(e,F) .
$$ 
It is clear that $\Par(e,F) = \emptyset$ whenever $\# F > |e|$, $\Par^\smallcirc(e,\emptyset)=\emptyset$ if $e\neq 0$, $\Par^\smallcirc(0,\Lambda)$ consists of only the constant map $0$ and that $\Par(0,\emptyset)=\Par^\smallcirc(0,\emptyset)$ consists of only the inclusion $\emptyset \hookrightarrow \N^{(\bfs)}_*$. If $\# F =1$ and $e\neq 0$, then $\Par(e,F)$ also consists of only one map: the constant map with value $e$. 

The natural map $\displaystyle \coprod_{\scriptstyle F\in \Partes_f(\Lambda)}  \Par(e, F) \longrightarrow \Par^\smallcirc(e,\Lambda)$ is obviously a bijection.

If $r\geq 0$ is an integer, we will denote 
$\Par(e,r) := \Par(e,[r])$.

\item[(v)] Assume that $\Lambda$ is a finite set, $\bft$ is an arbitrary set and $\pi:\Lambda \to \bft$ is map.
Then, there is a natural bijection 
$$    \Par^\smallcirc(e,\Lambda)  \leftrightarrow 
\coprod_{\scriptstyle e^\bullet \in \Par^\smallcirc(e,\bft)} \prod_{\scriptstyle t\in \bft} \Par^\smallcirc(e^t,\pi^{-1}(t)) = \coprod_{\scriptstyle e^\bullet \in \Par^\smallcirc(e,\bft)} \prod_{\scriptstyle t\in \supp e^\bullet} \Par^\smallcirc(e^t,\pi^{-1}(t)).
$$ 
Namely, to each $\alpha^\bullet \in \Par^\smallcirc(e,\Lambda)$ we associate $e^\bullet \in \Par^\smallcirc(e,\bft)$ defined by $e^t = \sum_{\pi(l)=t} \alpha^l$, and $\{ \alpha^{t\bullet} \}_{t\in \bft} \in \prod_{t\in \bft} \Par^\smallcirc(e^t,\pi^{-1}(t))$ with $\alpha^{t\bullet} =\alpha^\bullet|_{\pi^{-1}(t)}$. Let us notice that if for some $t_0\in\bft$ one has $\pi^{-1}(t_0)=\emptyset$ and $e^{t_0}\neq 0$, then $\Par^\smallcirc(e^{t_0},\pi^{-1}(t_0))=\emptyset$ and so $\prod_{t\in \bft} \Par^\smallcirc(e^t,\pi^{-1}(t)) =\emptyset$. Hence
\begin{eqnarray*}
&\displaystyle
\coprod_{\scriptstyle e^\bullet \in \Par^\smallcirc(e,\bft)} \prod_{\scriptstyle t\in \bft} \Par^\smallcirc(e^t,\Lambda_t) = \coprod_{\scriptstyle e^\bullet \in \Par_{\pi}^\smallcirc(e,\bft)} \prod_{\scriptstyle t\in \bft} \Par^\smallcirc(e^t,\pi^{-1}(t)) =
&\\
&\displaystyle
\coprod_{\scriptstyle e^\bullet \in \Par_{\pi}^\smallcirc(e,\bft)} \prod_{\scriptstyle t\in \supp e^\bullet} \Par^\smallcirc(e^t,\pi^{-1}(t)),
\end{eqnarray*}
where $\Par_{\pi}^\smallcirc(e,\bft)$ is the subset of $\Par^\smallcirc(e,\bft)$ whose elements are the  $e^\bullet \in \Par^\smallcirc(e,\bft)$ such that 
$ e^t = 0 $ whenever $\pi^{-1}(t)=\emptyset$ and $|e^t| \geq \# \pi^{-1}(t)$ otherwise.
\medskip

The preceding bijection induces a bijection 
\begin{equation} \label{eq:bijec-par}
  \Par(e,\Lambda) \longleftrightarrow
\coprod_{\scriptstyle e^\bullet \in \Par_{\pi}^\smallcirc(e,\bft)} \prod_{\scriptstyle t\in \bft} \Par(e^t,\pi^{-1}(t))=
\coprod_{\scriptstyle e^\bullet \in \Par_{\pi}^\smallcirc(e,\bft)} \prod_{\scriptstyle t\in \supp e^\bullet} \Par(e^t,\pi^{-1}(t)).
\end{equation}
\item[(vi)] If $\alpha\in \N^{(\bft)}$, we denote
$$ [\alpha] := \{ (t,r) \in \bft \times \N_*\ |\ 1\leq r\leq \alpha_t \}
$$
endowed with the projection $\pi:[\alpha] \to \bft$. 
It is clear that $|\alpha|= \# [\alpha]$, and so
 $\alpha=0$ $\Longleftrightarrow$ $[\alpha] =\emptyset$. We denote $\Par(e,\alpha):= \Par(e,[\alpha])$. Elements in $ \Par(e,\alpha)$ will be written as 
$$\mathcal{b}^{\bullet\bullet}: (t,r)\in [\alpha]  \longmapsto \mathcal{b}^{tr}\in \N^{(\bfs)},\quad \text{with}\ \sum_{\scriptscriptstyle (t,r)\in [\alpha]} \mathcal{b}^{tr} =e.
$$
For each $\mathcal{b}^{\bullet\bullet} \in \Par(e,\alpha)$ and each $t\in\bft$, we denote
$$ \mathcal{b}^{t\bullet}: r\in [\alpha_t] \longmapsto \mathcal{b}^{tr} \in \N^{(\bfs)},\quad
[\mathcal{b}]^\bullet: t\in \bft \longmapsto [\mathcal{b}]^t := |\mathcal{b}^{t\bullet}|= \sum_{\scriptscriptstyle  r=1}^{\scriptscriptstyle \alpha_t} \mathcal{b}^{tr} \in \N^{(\bfs)}.
$$
Notice that $|[\mathcal{b}]^t| \geq \alpha_t$, $[\mathcal{b}]^t=0$ whenever $\alpha_t=0$ and
$\left| [\mathcal{b}]^\bullet \right| = e$. The bijection (\ref{eq:bijec-par}) gives rise to a bijection
\begin{equation} \label{eq:bijec-par-alpha}
 \Par(e,\alpha) \longleftrightarrow
\coprod_{\scriptstyle e^\bullet \in {\Par}_\alpha^\smallcirc(e,\bft)} \prod_{\scriptstyle t\in \bft} \Par(e^t,\alpha_t)  = \coprod_{\scriptstyle e^\bullet \in {\Par}_\alpha^\smallcirc(e,\bft)} \prod_{\scriptstyle t\in \supp e^\bullet} \Par(e^t,\alpha_t),
\end{equation}
where $ {\Par}_\alpha^\smallcirc(e,\bft)$ is the subset of $\Par^\smallcirc(e,\bft)$ whose elements are the  $e^\bullet \in \Par^\smallcirc(e,\bft)$ such that 
$ e^t = 0 $ if $\alpha_t=0$ and $|e^t| \geq \alpha_t$ otherwise.
\end{enumerate}
\end{notation}

\numero \label{nume:explicit-substitution}
Let $\bft$, $\bfu$ be sets and $\Delta\subset \N^{(\bfu)}$ a non-empty co-ideal.
Let $\varphi_0: A[\bft] \xrightarrow{} A[[\bfu]]_\Delta $ be an $A$-algebra map given by:
$$\varphi_0(t)=: c^t = 
\sum_{\substack{\scriptscriptstyle \beta\in\Delta\\ \scriptscriptstyle 0<|\beta| }} c^t_\beta \bfu^\beta \in \fn^A_0(\bfu)/\Delta_A  \subset  A[[\bfu]]_\Delta,\ \  t\in \bft.
$$
Let us write down the expression of the image $\varphi_0(a)$ of any $a\in A[\bft]$ in terms of the coefficients of $a$ and the $c^t, t\in \bft$. 
First, for each $r\geq  0$ and for each $t\in \bft$ we have
$$ \varphi_0(t^r)= (c^t)^r = \dots = \sum_{\substack{\scriptscriptstyle  e\in \Delta\\ \scriptscriptstyle   |e|\geq r }} \left( \sum_{\scriptscriptstyle  \beta^\bullet \in \Par(e,r)}  \prod_{\scriptstyle k=1}^{\scriptstyle r} c^t_{\beta^k}\right) \bfu^e.
$$
Observe that
\begin{equation} \label{eq:convention}
\sum_{\scriptscriptstyle  \beta^\bullet \in \Par(e,r)}  \prod_{\scriptscriptstyle k=1}^r c^t_{\beta^k} = \left\{ \begin{array}{ll} 1 & \text{if $|e|=r=0$}\\
0 & \text{if $|e|>r=0$.} \end{array} \right.
\end{equation}
So, for each $\alpha \in \N^{(\bft)}$ we have
\begin{eqnarray*}  
&\displaystyle  \varphi_0(\bft^\alpha)  = \prod_{\scriptscriptstyle t\in \bft} (c^t)^{\alpha_t} = \prod_{\scriptscriptstyle t\in \supp \alpha} (c^t)^{\alpha_t} =
\prod_{\scriptscriptstyle t\in \supp \alpha} \left( 
\sum_{\substack{\scriptscriptstyle  e\in \Delta\\ \scriptscriptstyle   |e|\geq \alpha_t }} \left( 
\sum_{\scriptscriptstyle \beta^\bullet \in \Par(e,\alpha_t)}
\prod_{\scriptscriptstyle k=1}^{\scriptscriptstyle \alpha_t} c^t_{\beta^k}\right) \bfu^e
\right)=
&\\
&\displaystyle    \sum_{\substack{\scriptscriptstyle  e^t\in \Delta, t\in \supp \alpha\\ \scriptscriptstyle  |e^t|\geq \alpha_t }} \prod_{\scriptscriptstyle t\in \supp \alpha} 
\left(\left( 
\sum_{\scriptscriptstyle \beta^\bullet \in \Par(e^t,\alpha_t)} 
\prod_{\scriptscriptstyle k=1}^{\scriptscriptstyle \alpha_t} c^t_{\beta^k}\right) \bfu^{e^t}\right) =
\end{eqnarray*}
\begin{eqnarray*}  
&\displaystyle    \sum_{\substack{\scriptscriptstyle  e^t\in \Delta, t\in \supp \alpha\\ \scriptscriptstyle  |e^t|\geq \alpha_t }} 
\left(
\sum_{\substack{\scriptscriptstyle \beta^{t \bullet} \in \Par(e^t,\alpha_t)\\ \scriptscriptstyle  t\in \supp \alpha}}
\left( \prod_{\scriptscriptstyle t\in \supp \alpha} 
\prod_{\scriptscriptstyle k=1}^{\scriptscriptstyle \alpha_t} c^t_{\beta^{tk}}\right) \right)
\left( \prod_{\scriptscriptstyle t\in \supp \alpha} 
\bfu^{e^t}\right) =
&\\
&\displaystyle  
\sum_{\substack{\scriptscriptstyle  e\in \Delta\\ \scriptscriptstyle  |e|\geq |\alpha| }}
\left( 
\sum_{\substack{\scriptscriptstyle  e^t\in \Delta, t\in \supp \alpha\\ \scriptscriptstyle   |e^t|\geq \alpha_t\\ \scriptscriptstyle  |e^\bullet|=e }} 
\left(
\sum_{\substack{\scriptscriptstyle \beta^{t \bullet} \in \Par(e^t,\alpha_t)\\ \scriptscriptstyle  t\in \supp \alpha}}
\left( \prod_{\scriptscriptstyle t\in \supp \alpha} 
\prod_{\scriptscriptstyle k=1}^{\scriptscriptstyle \alpha_t} c^t_{\beta^{tk}}\right) \right) 
\right)
\bfu^e =
&\\
&\displaystyle  
\sum_{\substack{\scriptscriptstyle  e\in \Delta\\ \scriptscriptstyle |e|\geq |\alpha| }} 
\left(
\sum_{\scriptscriptstyle e^\bullet \in \Par_\alpha^\smallcirc(e,\bft)}
\left(
\sum_{\substack{\scriptscriptstyle \beta^{t \bullet} \in \Par(e^t,\alpha_t)\\ \scriptscriptstyle  t\in \supp \alpha}}
\left( \prod_{\scriptscriptstyle t\in \supp \alpha} 
\prod_{\scriptscriptstyle k=1}^{\scriptscriptstyle \alpha_t} c^t_{\beta^{tk}}\right)
\right)
\right) \bfu^e=
\sum_{\substack{\scriptscriptstyle  e\in \Delta\\ \scriptscriptstyle  |e|\geq |\alpha| }} 
{\bf C}_e(\varphi_0,\alpha) \bfu^e,
\end{eqnarray*}
with (see (\ref{eq:bijec-par-alpha})):
\begin{equation} \label{eq:explicit-C_e(varphi,alpha)}
{\bf C}_e(\varphi_0,\alpha)=  \sum_{\scriptscriptstyle \beta^{\bullet\bullet} \in \Par(e,\alpha)}
C_{\beta^{\bullet\bullet}},\quad
 C_{\beta^{\bullet\bullet}} = \prod_{\scriptstyle t\in \supp \alpha} \prod_{\scriptstyle r=1}^{\scriptstyle \alpha_t}  c^t_{\beta^{tr}},\quad
 \text{for\ }\ |\alpha| \leq |e|.
\end{equation}
We have
${\bf C}_0(\varphi_0,0) = 1$ and ${\bf C}_e(\varphi_0,0)=0$ for $e\neq 0$. 
For a fixed $e\in \N^{(\bfu)}$ the support of any $\alpha\in \N^{(\bft)}$ such that $|\alpha|\leq |e|$ and ${\bf C}_e(\varphi_0,\alpha)\neq 0$ is contained in the set
$$ \bigcup_{\substack{\scriptscriptstyle \beta \in \Delta\\ \scriptscriptstyle \beta \leq e}} \{t\in \bft\ |\ c^t_\beta \neq 0\}
$$
and so the set of such $\alpha$'s is finite provided that property (\ref{eq:cond-fini-c}) holds. 
We conclude that
\begin{equation} \label{eq:exp-substi}
\varphi_0\left( \sum_{\scriptscriptstyle\alpha\in\N^{(\bft)}} a_\alpha \bft^\alpha \right) = \sum_{\scriptscriptstyle \alpha\in \N^{(\bft)}} a_\alpha c^\alpha=
\sum_{\scriptscriptstyle e\in \Delta}
\left(  \sum_{\substack{\scriptscriptstyle \alpha\in\N^{(\bft)}\\ \scriptscriptstyle  |\alpha|\leq |e| }} {\bf C}_e(\varphi_0,\alpha) a_\alpha \right) \bfu^e.
\end{equation}

Observe that for each non-zero $\alpha\in\N^{(\bft)}$ we have:
\begin{equation}  \label{eq:condition-supp-substi}
\supp(\varphi_0(\bft^\alpha)) = \supp\left( \prod_{\scriptstyle t\in\supp \alpha} \left(c^t\right)^{\alpha_t}\right)
 \subset \sum_{\scriptscriptstyle t\in \supp (\alpha)} \alpha_t \cdot \supp(c^t).
\end{equation}

Let us notice that if we assign the weight $|\beta|$ to
$c^t_{\beta}$, then 
${\bf C}_e(\varphi_0,\alpha)$ is a quasi-homogeneous polynomial in the variables $c^t_\beta$, $t\in\supp \alpha$, $|\beta| \leq |e|$, of weight $|e|$.
\medskip

The proof of the following lemma is easy and it is left to the reader.

\begin{lemma} \label{lemma:behavior_C_alpha}
For each $e\in\Delta$ and for each $\alpha\in \N^{(\bft)}$ with $0<|\alpha|\leq |e|$,
the following properties hold:
\begin{enumerate}
\item[(1)] If $|\alpha|=1$, then ${\bf C}_e(\varphi_0,\alpha) = c_e^s$, where $\supp \alpha = \{s\}$, i.e. $\alpha = \bft^s$ ($\bft^s_t = \delta_{st}$).
\item[(2)] If $|\alpha|=|e|$, then
$$
{\bf C}_e(\varphi_0,\alpha) = 
\sum_{\substack{\scriptscriptstyle  e^t\in \Delta, t\in \supp \alpha\\ \scriptscriptstyle  |e^t|= \alpha_t, |e^\bullet|=e }} 
\left( \prod_{\scriptscriptstyle t\in \supp \alpha} 
\prod_{\scriptscriptstyle v\in \supp e^t } \left(c^t_{\bfu^v}\right)^{e^t_v} \right).
$$
\end{enumerate}
\end{lemma}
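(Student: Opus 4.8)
The plan is to read off both identities directly from the explicit formula (\ref{eq:explicit-C_e(varphi_0,alpha)}), using the combinatorial description of $\Par(e,\alpha)$ provided by (\ref{eq:bijec-par-alpha}). Recall that
$$
{\bf C}_e(\varphi_0,\alpha) = \sum_{\scriptscriptstyle \beta^{\bullet\bullet}\in \Par(e,\alpha)} \prod_{\scriptstyle t\in\supp\alpha}\prod_{\scriptstyle r=1}^{\scriptstyle\alpha_t} c^t_{\beta^{tr}},
$$
where $\beta^{\bullet\bullet}$ ranges over all assignments $(t,r)\in[\alpha]\mapsto \beta^{tr}\in\N^{(\bfu)}_*$ (each entry nonzero) with $\sum_{(t,r)\in[\alpha]}\beta^{tr}=e$. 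The cardinality of the index set $[\alpha]$ is $|\alpha|$, so the basic mechanism in both parts is a comparison of $|\alpha|$ against $|e|$.

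For part (1), when $|\alpha|=1$ the set $\supp\alpha$ is a singleton $\{s\}$ with $\alpha_s=1$, hence $[\alpha]$ has a single element $(s,1)$. An element of $\Par(e,\alpha)$ is then just the choice of a single $\beta^{s1}\in\N^{(\bfu)}_*$ with $\beta^{s1}=e$; since $e\in\Delta$ and $|e|\geq|\alpha|=1$ forces $e\neq 0$, this is the unique admissible choice, and the product in the formula collapses to $c^s_e$. Thus ${\bf C}_e(\varphi_0,\alpha)=c^s_e$, as claimed. (Equivalently, one could cite (\ref{eq:convention}) together with the $r=1$ case already noted in \ref{nume:explicit-substitution}.)

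For part (2), when $|\alpha|=|e|$ the constraint $\sum_{(t,r)\in[\alpha]}\beta^{tr}=e$ together with $\#[\alpha]=|e|=\sum_{(t,r)}1$ forces every $\beta^{tr}$ to have $|\beta^{tr}|=1$, i.e. $\beta^{tr}=\bfu^{v(t,r)}$ for some $v(t,r)\in\bfu$. So the sum over $\Par(e,\alpha)$ becomes a sum over functions $(t,r)\mapsto v(t,r)$ such that $\sum_{(t,r)}\bfu^{v(t,r)}=e$. Applying the bijection (\ref{eq:bijec-par-alpha}): each such datum corresponds to a choice of $e^\bullet\in{\Par}_\alpha^\smallcirc(e,\bft)$ — here necessarily $|e^t|=\alpha_t$ for $t\in\supp\alpha$ and $|e^\bullet|=e$ — together with, for each $t$, a distribution of the $\alpha_t$ values among the positions, which amounts to recording the multiplicities $e^t_v$. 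Grouping the product $\prod_{t\in\supp\alpha}\prod_{r=1}^{\alpha_t}c^t_{\beta^{tr}}$ by the value $v$ of $\beta^{tr}$ yields $\prod_{t\in\supp\alpha}\prod_{v\in\supp e^t}(c^t_{\bfu^v})^{e^t_v}$, and summing over all admissible $e^\bullet$ gives exactly the stated expression. The only mild subtlety — and the one point worth spelling out — is to check that the multinomial bookkeeping in (\ref{eq:bijec-par-alpha}) matches: since $\Par(e^t,\alpha_t)$ in the $|e^t|=\alpha_t$ case consists precisely of the ordered tuples of elements $\bfu^v$ realizing the multiplicity vector $e^t$, and the product $c^t_{\bfu^v}\cdots$ is insensitive to the ordering, the count over $\Par(e^t,\alpha_t)$ collapses cleanly to the single monomial $\prod_{v}(c^t_{\bfu^v})^{e^t_v}$ per choice of $e^\bullet$, with no leftover binomial coefficients. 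This is the main (and only real) obstacle, and it is routine.
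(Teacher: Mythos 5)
Part (1) of your argument is correct: $[\alpha]$ is a singleton, $\Par(e,\alpha)$ has exactly one element (the map sending that singleton to $e$, which is admissible since $|e|\geq 1$ forces $e\neq 0$), and the product collapses to $c^s_e$.

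In part (2), however, the step you yourself single out as ``the main (and only real) obstacle'' is exactly where the argument fails. When $|e^t|=\alpha_t$, the set $\Par(e^t,\alpha_t)$ consists of \emph{ordered} $\alpha_t$-tuples of basis vectors $\bfu^v$ realizing the multiplicity vector $e^t$. Since the summand $\prod_{r}c^t_{\beta^{tr}}$ equals the same monomial $\prod_{v}(c^t_{\bfu^v})^{e^t_v}$ for every such ordering, the inner sum does not ``collapse cleanly to the single monomial'': it equals that monomial \emph{times} the number of orderings, i.e.\ the multinomial coefficient $\alpha_t!/\prod_{v}e^t_v!$. Order-insensitivity of the summand is precisely why the identical terms accumulate rather than merge. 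The correct grouping of (\ref{eq:explicit-C_e(varphi,alpha)}) along the bijection (\ref{eq:bijec-par-alpha}) therefore gives
$$
{\bf C}_e(\varphi_0,\alpha)=\sum_{\substack{\scriptscriptstyle e^t\in\Delta,\ t\in\supp\alpha\\ \scriptscriptstyle |e^t|=\alpha_t,\ |e^\bullet|=e}}\ \prod_{\scriptscriptstyle t\in\supp\alpha}\frac{\alpha_t!}{\prod_{v\in\supp e^t}e^t_v!}\ \prod_{\scriptscriptstyle v\in\supp e^t}\left(c^t_{\bfu^v}\right)^{e^t_v},
$$
the fibre of (\ref{eq:bijec-par-alpha}) over a fixed $e^\bullet$ having cardinality $\prod_t\alpha_t!/\prod_v e^t_v!$, not $1$. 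A concrete check: take $\bft=\{t\}$, $\bfu=\{u_1,u_2\}$, $c^t=c_1u_1+c_2u_2$, $\alpha=2\,\bft^t$ and $e=\bfu^{u_1}+\bfu^{u_2}$; then $\varphi_0(t^2)=(c^t)^2$ has coefficient ${\bf C}_e(\varphi_0,\alpha)=2c_1c_2$ at $\bfu^e$, while ``one monomial per $e^\bullet$'' yields only $c_1c_2$. So your argument does not prove the displayed identity; the identity as printed can only hold when each $e^t$ is concentrated at a single point of $\bfu$ (where the multinomial factor is $1$), and in general the multinomial factors must be accounted for. Asserting ``no leftover binomial coefficients'' without computing the fibre of the bijection is the gap.
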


\begin{proposition} Let $\bft, \bfu$ be sets and $\Delta\subset \N^{(\bfu)}$ a non-empty co-ideal. For each family
$$c=\left\{c^t = \sum_{\substack{\scriptscriptstyle \beta\in \Delta\\ \scriptscriptstyle \beta\neq 0}} c^t_\beta \bfu^\beta \in \fn^A_0(\bfu)/\Delta_A \subset  A[[\bfu]]_\Delta,\ t\in \bft\right\}
$$ 
(we are assuming that $c^t_0 = 0$) satisfying the following property
\begin{equation} \label{eq:cond-fini-c}
 \#\{t\in \bft\ |\ c^t_\beta \neq 0\} < \infty\quad\quad \text{for all\ }\ \beta \in \Delta,
\end{equation}
there is a unique continuous $A$-algebra map $\varphi: A[[\bft]] \xrightarrow{} A[[\bfu]]_\Delta$ such that $\varphi(t) = c^t$ for all $t\in\bft$. Moreover, if $\nabla\subset \N^{(\bft)}$ is a non-empty co-ideal such that $\varphi(\nabla_A) =0$, then $\varphi$ induces a unique continuous $A$-algebra map $A[[\bft]]_\nabla \xrightarrow{} A[[\bfu]]_\Delta$ sending (the class of) each $t\in\bft$ to $c^t$.
\end{proposition}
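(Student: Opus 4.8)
The plan is to pass from polynomials to power series by completion. First, the universal property of the polynomial ring $A[\bft]$, applied to the commutative $A$-algebra $A[[\bfu]]_\Delta$ and the family $(c^t)_{t\in\bft}$, produces a unique $A$-algebra map $\varphi_0\colon A[\bft]\to A[[\bfu]]_\Delta$ with $\varphi_0(t)=c^t$, whose explicit form on a polynomial is the one computed in \ref{nume:explicit-substitution}, namely (\ref{eq:exp-substi}). Since $A[[\bft]]$ is the completion of $A[\bft]$ for the topology with fundamental system of neighbourhoods of $0$ the ideals $\fn^A_\beta(\bft)^{\bf c}$, and since $A[[\bfu]]_\Delta$ is complete and separated, it will be enough to prove that $\varphi_0$ is continuous: it then extends uniquely to a continuous additive map $\varphi\colon A[[\bft]]\to A[[\bfu]]_\Delta$, and this $\varphi$ is automatically a homomorphism of $A$-algebras, because $A[[\bft]]$ and $A[[\bfu]]_\Delta$ are topological rings, so the set of pairs $(x,y)$ with $\varphi(xy)=\varphi(x)\varphi(y)$ is closed and contains the dense subring $A[\bft]$, while $\varphi|_A$ is the structure map. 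Uniqueness of $\varphi$ with $\varphi(t)=c^t$ then follows, as two continuous $A$-algebra maps with this property coincide on the dense subring $A[\bft]$ and the target is separated.

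The key step is the continuity of $\varphi_0$, which is where hypothesis (\ref{eq:cond-fini-c}) is used. Fixing $\gamma\in\N^{(\bfu)}$, I must produce $\beta\in\N^{(\bft)}$ with $\varphi_0\bigl(\fn^A_\beta(\bft)^{\bf c}\bigr)\subseteq\bigl(\fn^A_\gamma(\bfu)+\Delta_A\bigr)/\Delta_A$. By (\ref{eq:exp-substi}) the $\bfu^e$-coefficient of $\varphi_0\bigl(\sum_\alpha a_\alpha\bft^\alpha\bigr)$ is $\sum_{|\alpha|\leq|e|}{\bf C}_e(\varphi_0,\alpha)\,a_\alpha$, so it suffices to arrange $a_\alpha=0$ for every $\alpha$ in
$$B_\gamma:=\bigl\{\alpha\in\N^{(\bft)}\ \big|\ {\bf C}_e(\varphi_0,\alpha)\neq 0\ \text{for some}\ e\in\Delta\ \text{with}\ e\leq\gamma\bigr\}.$$
Given $\alpha\in B_\gamma$, choose such an $e$; then $|\alpha|\leq|e|\leq|\gamma|$ (since ${\bf C}_e(\varphi_0,\alpha)$ is an empty sum when $|\alpha|>|e|$, as $\Par(e,\alpha)=\emptyset$ there), and by the expansion (\ref{eq:explicit-C_e(varphi,alpha)}) there is a $\beta^{\bullet\bullet}\in\Par(e,\alpha)$ with $\prod_{t\in\supp\alpha}\prod_{r=1}^{\alpha_t}c^t_{\beta^{tr}}\neq 0$; hence each factor $c^t_{\beta^{tr}}$ is nonzero, and as each $\beta^{tr}$ is a nonzero element of $\Delta$ with $\beta^{tr}\leq e\leq\gamma$, this forces every $t\in\supp\alpha$ to lie in the set $T_\gamma:=\bigcup_{\beta\in\Delta,\,\beta\leq\gamma}\{t\in\bft\mid c^t_\beta\neq 0\}$. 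Thus $\supp\alpha\subseteq T_\gamma$ and $|\alpha|\leq|\gamma|$ for all $\alpha\in B_\gamma$. Now $\{\beta\in\Delta\mid\beta\leq\gamma\}$ is finite and each $\{t\mid c^t_\beta\neq 0\}$ is finite by (\ref{eq:cond-fini-c}), so $T_\gamma$ is finite, whence $B_\gamma$ is contained in the finite set of $\alpha$ with $\supp\alpha\subseteq T_\gamma$ and $|\alpha|\leq|\gamma|$, and so $B_\gamma$ is finite. Taking $\beta:=\bigvee_{\alpha\in B_\gamma}\alpha\in\N^{(\bft)}$, every $\alpha\in B_\gamma$ satisfies $\alpha\leq\beta$, so any $r\in\fn^A_\beta(\bft)^{\bf c}$ has $a_\alpha=0$ for all $\alpha\in B_\gamma$ and therefore $\varphi_0(r)\in\bigl(\fn^A_\gamma(\bfu)+\Delta_A\bigr)/\Delta_A$. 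This proves the continuity of $\varphi_0$ and, with it, the first assertion.

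For the final assertion, suppose $\nabla\subset\N^{(\bft)}$ is a non-empty co-ideal with $\varphi(\nabla_A)=0$. Since $\nabla_A$ is a closed ideal of $A[[\bft]]$ contained in $\ker\varphi$, the map $\varphi$ factors through a unique $A$-algebra map $\overline\varphi\colon A[[\bft]]_\nabla=A[[\bft]]/\nabla_A\to A[[\bfu]]_\Delta$ sending the class of each $t$ to $c^t$; it is continuous because $A[[\bft]]_\nabla$ carries the quotient topology and $\varphi$ is continuous, and it is the unique continuous $A$-algebra map with this property since $A[[\bft]]\to A[[\bft]]_\nabla$ is surjective.

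The main obstacle is the continuity argument of the second paragraph: this is the only place where the finiteness hypothesis (\ref{eq:cond-fini-c}) is genuinely needed, and it rests on the explicit combinatorial description of the coefficients ${\bf C}_e(\varphi_0,\alpha)$ recorded in \ref{nume:explicit-substitution}. Everything else — the extension to the completion, the fact that $\varphi$ is an algebra map, and the passage to the quotient $A[[\bft]]_\nabla$ — is a formal consequence of completeness and of the universal properties of polynomial rings and quotient topologies.
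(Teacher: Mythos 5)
Your proposal is correct and follows essentially the same route as the paper: both reduce to the finiteness, for each fixed $e\in\Delta$, of the set of $\alpha$ with ${\bf C}_e(\varphi_0,\alpha)\neq 0$, deduced from hypothesis (\ref{eq:cond-fini-c}) via the explicit formula (\ref{eq:explicit-C_e(varphi,alpha)}), and then extend from $A[\bft]$ to the completion. You merely spell out in more detail the continuity verification and the density/closedness argument that the paper leaves implicit.
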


\begin{proof}  
Let us consider the unique $A$-algebra map $\varphi_0: A[\bft] \xrightarrow{} A[[\bfu]]_\Delta$ defined by $\varphi_0(t) = c^t$ for all $t\in\bft$. From (\ref{eq:explicit-C_e(varphi,alpha)}) and  (\ref{eq:exp-substi}) in \ref{nume:explicit-substitution}, we know that  
$$
\varphi_0\left( \sum_{\substack{\scriptscriptstyle\alpha\in\N^{(\bft)}\\ \scriptscriptstyle \text{finite}}} a_\alpha \bft^\alpha \right) = 
\sum_{\scriptscriptstyle e\in \Delta}
\left(  \sum_{\substack{\scriptscriptstyle \alpha\in\N^{(\bft)}\\ \scriptscriptstyle  |\alpha|\leq |e| }} {\bf C}_e(\varphi_0,\alpha) a_\alpha \right) \bfu^e.
$$
Since for a fixed $e\in \N^{(\bfu)}$ the support of the $\alpha\in \N^{(\bft)}$ such that $|\alpha|\leq |e|$ and ${\bf C}_e(\varphi_0,\alpha)\neq 0$ is contained in the finite set
$$ \bigcup_{\substack{\scriptscriptstyle \beta \in \Delta\\ \scriptscriptstyle \beta \leq e}} \{t\in \bft\ |\ c^t_\beta \neq 0\},
$$
the set of such $\alpha$'s is always finite and we deduce that $\varphi_0$ is continuous, and so there is a unique continuous extension $\varphi: A[[\bft]] \xrightarrow{} A[[\bfu]]_\Delta$ such that $\varphi(t) = c^t$ for all $t\in\bft$.

The last part is clear.
\end{proof}

\begin{remark} Let us notice that, after (\ref{eq:condition-supp-substi}),  to get the equality $\varphi(\nabla_A) =0$ in the above proposition it is enough to have for each $\alpha \in \nabla^c$ (actually, it will be enough to consider the $\alpha \in \nabla^c$ minimal with respect to the ordering $\leq$ in $\N^{(\bft)}$):
$$ \sum_{\scriptscriptstyle t\in \supp (\alpha)} \alpha_t \cdot \supp(c^t) \subset \Delta^c.
$$
\end{remark}

\begin{definition}  \label{def:substitution_maps} Let $\nabla\subset \N^{(\bft)}, \Delta\subset \N^{(\bfu)}$ be non-empty co-ideals. An $A$-algebra map $\varphi:A[[\bft]]_\nabla \xrightarrow{} A[[\bfu]]_\Delta$
will be called a {\em substitution map} \index{substitution map} if the following properties hold:
\begin{enumerate}
\item[(1)] $\varphi$ is continuous.
\item[(2)] $\varphi(t)\in \fn^A_0(\bfu)/\Delta_A$ for all $t\in \bft$.
\item[(3)] The family $c=\{\varphi(t), t\in \bft\}$ satisfies property (\ref{eq:cond-fini-c}).
\end{enumerate}
\medskip

The set of substitution maps $A[[\bft]]_\nabla \xrightarrow{} A[[\bfu]]_\Delta$ will be denoted by $\Sub_A(\bft,\bfu;\nabla,\Delta)$.
The {\em trivial} substitution map \index{trivial substitution map} $A[[\bft]]_\nabla \xrightarrow{} A[[\bfu]]_\Delta $ is the one sending any $t\in \bft$ to $0$. It will be denoted by $\mathbf{0}$. 
\end{definition}

\begin{remark}
In the above definition, 
a such $\varphi$ is uniquely determined by the family $c=\{\varphi(t), t\in \bft\}$, and will be called the {\em substitution map associated} with $c$. Namely, the family $c$ can be lifted to $A[[\bfu]]$ by means of the natural $A$-linear scission $A[[\bfu]]_\Delta \hookrightarrow A[[\bfu]]$ and we may consider the unique continuous $A$-algebra map $\psi: A[[\bft]] \to A[[\bfu]]$ such that $\psi(s) = c^s$ for all $s\in\bfs$. Since $\varphi$ is continuous, we have a commutative diagram
$$
\begin{CD}
A[[\bft]] @>{\psi}>>A[[\bfu]] \\
@V{\text{proj.}}VV @VV{\text{proj.}}V\\
A[[\bft]]_\nabla @>{\varphi}>>A[[\bfu]]_\Delta,
\end{CD}
$$
and so $\psi(\nabla_A) \subset \Delta_A$. Then, we may indentify
$$ \Sub_A(\bft,\bfu;\nabla,\Delta) \equiv \left\{\overline{\psi}\in \Sub_A(\bft,\bfu;\N^{(\bft)},\Delta)\ |\ \overline{\psi}(\nabla_A)=0 \right\}.
$$
For $\alpha \in \nabla$ and $e\in \Delta$ with $|\alpha|\leq |e|$ we will write ${\bf C}_e(\varphi,\alpha) := {\bf C}_e(\varphi_0,\alpha)$, where $\varphi_0: A[\bft] \to A[[\bfu]]_\Delta$ is the $A$-algebra map given by $\varphi_0(t) = \varphi(t)$ for all $t\in\bft$ (see (\ref{eq:explicit-C_e(varphi,alpha)}) in \ref{nume:explicit-substitution}).
\end{remark}

\begin{example} For any family of integers $\nu =\{\nu_t\geq 1, t\in\bft\}$, we will denote
$ [\nu]: A[[\bft]]_\nabla \xrightarrow{} A[[\bft]]_{\nu\nabla}$ the substitution map determined by
$[\nu](t) = t^{\nu_t}$ for all $t\in\bft$, where
$$ \nu\nabla := \{\gamma \in \N^{(\bft)}\ |\ \exists \alpha \in\nabla, \gamma\leq \nu \alpha \}.
$$
We obviously have $[\nu  \nu'] = [\nu] \pcirc [\nu']$. 
\end{example}

\begin{lemma} \label{lemma:compos_subst_maps}
The composition of two substitution maps $A[[\bft]]_\nabla \stackrel{\varphi}{\to} A[[\bfu]]_\Delta \stackrel{\psi}{\to} A[[\bfs]]_\Omega$ is a substitution map and we have
$$
{\bf C}_f(\psi \pcirc \varphi,\alpha) = \sum_{\substack{\scriptscriptstyle  e\in \Delta\\ \scriptscriptstyle  |f|\geq |e|\geq |\alpha| }} 
{\bf C}_e(\varphi,\alpha) {\bf C}_f(\psi,e),\quad \forall f\in\Omega, \forall \alpha\in\nabla, |\alpha|\leq |f|.
$$
Moreover, if one of the substitution maps is trivial, then the composition is trivial too.
\end{lemma}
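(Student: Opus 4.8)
The plan is to verify each of the three assertions in turn, working from the explicit formulas in \ref{nume:explicit-substitution}. First I would check that $\psi\pcirc\varphi$ is a substitution map: continuity is automatic since the composite of continuous maps is continuous, and property (2) of Definition \ref{def:substitution_maps} holds because $\varphi(t)\in\fn^A_0(\bfu)/\Delta_A$ and $\psi$ sends $\fn^A_0(\bfu)/\Delta_A$ into $\fn^A_0(\bfs)/\Omega_A$ (as $\psi$ itself satisfies (2), hence $\psi(u)$ has zero constant term for every $u\in\bfu$, and $\psi$ is a continuous $A$-algebra map, so it preserves the augmentation ideals). For property (3), one uses the description of the coefficients: if $c^t=\varphi(t)=\sum_\beta c^t_\beta\bfu^\beta$ and $d^u=\psi(u)=\sum_\gamma d^u_\gamma\bfs^\gamma$, then $(\psi\pcirc\varphi)(t)=\psi(c^t)=\sum_{e\in\Delta}{\bf C}_e(\psi,\,\cdot\,)$-type expansions, and for a fixed $f\in\Omega$ the set of $t$ with nonzero $\bfs^f$-coefficient is controlled by finitely many $e\leq f$ in $\Delta$, each of which (by the finiteness hypothesis on $c$) involves only finitely many $t$; a finite union of finite sets is finite.

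The heart of the matter is the composition formula for the ${\bf C}$-coefficients. The approach is to compute $(\psi\pcirc\varphi)(\bft^\alpha)$ two ways. On one hand, applying (\ref{eq:exp-substi}) to $\varphi$ first gives $\varphi(\bft^\alpha)=\sum_{e\in\Delta,\,|e|\geq|\alpha|}{\bf C}_e(\varphi,\alpha)\,\bfu^e$; then applying $\psi$ termwise (legitimate by continuity of $\psi$) and using $\psi(\bfu^e)=\sum_{f\in\Omega,\,|f|\geq|e|}{\bf C}_f(\psi,e)\,\bfs^f$ yields
$$(\psi\pcirc\varphi)(\bft^\alpha)=\sum_{e\in\Delta,\,|e|\geq|\alpha|}\ \sum_{f\in\Omega,\,|f|\geq|e|}{\bf C}_e(\varphi,\alpha)\,{\bf C}_f(\psi,e)\,\bfs^f,$$
and regrouping by $f$ gives exactly the asserted coefficient of $\bfs^f$, namely $\sum_{e\in\Delta,\,|f|\geq|e|\geq|\alpha|}{\bf C}_e(\varphi,\alpha)\,{\bf C}_f(\psi,e)$. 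On the other hand $(\psi\pcirc\varphi)(\bft^\alpha)=\sum_{f\in\Omega,\,|f|\geq|\alpha|}{\bf C}_f(\psi\pcirc\varphi,\alpha)\,\bfs^f$ by definition of the coefficients for the substitution map $\psi\pcirc\varphi$. Comparing the $\bfs^f$-coefficients (the monomials $\bfs^f$ are a topological basis, so this comparison is valid) gives the formula. One should also note that the inner sum is finite: for fixed $f$ there are only finitely many $e\in\Delta$ with $e\leq f$ contributing to ${\bf C}_f(\psi,e)\neq 0$, so no convergence issue arises beyond what is already built into the fact that $\psi$ is continuous.

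Finally, the triviality claim: if $\varphi=\mathbf{0}$ then $\varphi(t)=0$ for all $t$, so $(\psi\pcirc\varphi)(t)=\psi(0)=0$; if $\psi=\mathbf{0}$ then $(\psi\pcirc\varphi)(t)=\psi(c^t)$, and since $c^t\in\fn^A_0(\bfu)/\Delta_A$ and $\psi$ kills every $u\in\bfu$ hence every element of the augmentation ideal, we get $(\psi\pcirc\varphi)(t)=0$. Either way the composite sends every $t$ to $0$, i.e. it is $\mathbf{0}$.

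**Main obstacle.** The only real point requiring care is the justification for applying $\psi$ term-by-term to the infinite series $\varphi(\bft^\alpha)=\sum_e {\bf C}_e(\varphi,\alpha)\bfu^e$ and then re-collecting the doubly-infinite sum by the exponent $f$; this is exactly where continuity of $\psi$ and the completeness/separatedness of $A[[\bfs]]_\Omega$ (established in Section 1) are used, together with the observation that for each fixed $f$ only finitely many pairs $(e,f)$ contribute. Everything else is bookkeeping with the combinatorial identities already set up in Notation \ref{notacion:Ump}$\,$ff. and formula (\ref{eq:exp-substi}).
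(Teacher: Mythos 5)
Your proposal is correct and follows essentially the same route as the paper: verify condition (3) via the finite union of finite sets $\{t : c^t_\beta\neq 0\}$ indexed by the finitely many $\beta$ with ${\bf C}_f(\psi,\beta)\neq 0$, then obtain the coefficient identity by expanding $(\psi\pcirc\varphi)(\bft^\alpha)$ termwise through $\psi$ and regrouping by $f$. The only nit is the phrase ``finitely many $e\leq f$'': since $e\in\N^{(\bfu)}$ and $f\in\N^{(\bfs)}$ the comparison should read $|e|\leq|f|$ with ${\bf C}_f(\psi,e)\neq 0$, but this does not affect the argument.
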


\begin{proof} 
Properties (1) and (2) in Definition \ref{def:substitution_maps} are clear. Let us see property (3).
For each $t\in\bft$ let us write: 
$$ \varphi(t) =: c^t = 
\sum_{\substack{\scriptscriptstyle \beta\in\Delta\\ \scriptscriptstyle 0<|\beta| }} c^t_\beta \bfu^\beta \in \fn^A_0(\bfu)/\Delta_A  \subset  A[[\bfu]]_\Delta,
$$
and so
\begin{eqnarray*}
&\displaystyle
(\psi \pcirc \varphi)(t) = \psi\left(\sum_{\substack{\scriptscriptstyle \beta\in\Delta\\ \scriptscriptstyle 0<|\beta| }} c^t_\beta \bfu^\beta \right) = \sum_{\substack{\scriptscriptstyle \beta\in\Delta\\ \scriptscriptstyle 0<|\beta| }} c^t_\beta 
\left(
\sum_{\substack{\scriptscriptstyle  f\in \Omega\\ \scriptscriptstyle  |f|\geq |\beta| }} 
{\bf C}_f(\psi,\beta) \bfs^f
\right)=
\sum_{\substack{\scriptscriptstyle  f\in \Omega\\ \scriptscriptstyle  |f|>0 }} 
d^t_f
\bfs^f
\end{eqnarray*}
with
$$ d^t_f=
\sum_{\substack{\scriptscriptstyle \beta\in\Delta\\ \scriptscriptstyle 0<|\beta|\leq |f| }} c^t_\beta {\bf C}_f(\psi,\beta) 
$$
and for a fixed $f\in\Omega$ the set
$$ \{t\in\bft\ |\ d^t_f\neq 0\} \subset \bigcup_{\substack{\scriptscriptstyle  \beta\in\nabla, |\beta|\leq|f|\\
\scriptscriptstyle {\bf C}_f(\psi,\beta) \neq 0}} \{t\in\bft\ |\ c^t_\beta\neq 0\}
$$
is finite.
On the other hand
\begin{eqnarray*}
&\displaystyle
 (\psi\pcirc\varphi)(\bft^\alpha) = \psi\left(
\sum_{\substack{\scriptscriptstyle  e\in \Delta\\ \scriptscriptstyle  |e|\geq |\alpha| }} 
{\bf C}_e(\varphi,\alpha) \bfu^e
\right) = 
\sum_{\substack{\scriptscriptstyle  e\in \Delta\\ \scriptscriptstyle  |e|\geq |\alpha| }} 
{\bf C}_e(\varphi,\alpha) 
\left(
\sum_{\substack{\scriptscriptstyle  f\in \Omega\\ \scriptscriptstyle  |f|\geq |e| }} 
{\bf C}_f(\psi,e) \bfs^f
\right)=
&
\\
&\displaystyle
\sum_{\substack{\scriptscriptstyle  f\in \Omega\\ \scriptscriptstyle  |f|\geq |\alpha| }} 
\left(
\sum_{\substack{\scriptscriptstyle  e\in \Delta\\ \scriptscriptstyle  |f|\geq |e|\geq |\alpha| }} 
{\bf C}_e(\varphi,\alpha) {\bf C}_f(\psi,e)
\right) \bfu^f
\end{eqnarray*}
and so
$$
{\bf C}_f(\psi\pcirc\varphi,\alpha) = \sum_{\substack{\scriptscriptstyle  e\in \Delta\\ \scriptscriptstyle  |f|\geq |e|\geq |\alpha| }} 
{\bf C}_e(\varphi,\alpha) {\bf C}_f(\psi,e),\quad \forall f\in\Omega, \forall \alpha\in\nabla, |\alpha|\leq |f|.
$$
\end{proof}

If $B$ is a commutative $A$-algebra, then any subtitution map $\varphi: A[[\bfs]]_\nabla \to A[[\bft]]_\Delta$ induces a natural substitution map $\varphi_B:B[[\bfs]]_\nabla \to B[[\bft]]_\Delta$ making the following diagram commutative
$$
\begin{CD}
B\widehat{\otimes}_A A[[\bfs]]_\nabla @>{\Id \widehat{\otimes} \varphi}>> B\widehat{\otimes}_A A[[\bft]]_\Delta\\
@V{\text{nat.}}V{\simeq}V @V{\simeq}V{\text{nat.}}V\\
B[[\bfs]]_\nabla @>{\varphi_B}>> B[[\bft]]_\Delta.
\end{CD}
$$

\numero \label{nume:operaciones-con-substitutions} 
For any substitution map $\varphi: A[[\bfs]]_\nabla \xrightarrow{} A[[\bft]]_\Delta$ and for any integer $n\geq 0$ we have $\varphi(\nabla^n_A/\nabla_A) \subset \Delta^n_A/\Delta_A$ and so there are induced substitution maps $\tau_{n}(\varphi):A[[\bfs]]_{\nabla^n}  \to A[[\bft]]_{\Delta^n}$ making commutative the following diagram
$$
\begin{CD}
A[[\bfs]]_\nabla  @>{\varphi}>>  A[[\bft]]_\Delta\\
@V{\text{nat.}}VV @VV{\text{nat.}}V\\
A[[\bfs]]_{\nabla^n}  @>{\tau_{n}(\varphi)}>>  A[[\bft]]_{\Delta^n}.
\end{CD}
$$

Moreover, if $\varphi$ is the substitution map associated with a family $c=\{c^s, s\in \bfs\}$,
$$c^s = \sum_{\scriptscriptstyle \beta\in \Delta} c^s_\beta \bft^\beta \in \fn^A_0(\bft)/\Delta_A\subset  A[[\bft]]_\Delta,
$$ 
then $\tau_{n}(\varphi)$ is the substitution map associated with the family $\tau_{n}(c)=\{\tau_{n}(c)^s, s\in \bfs\}$, with
$$\tau_{n}(c)^s := \sum_{\substack{\scriptscriptstyle \beta\in \Delta\\ \scriptscriptstyle |\beta|\leq n}} c^s_\beta \bft^\beta \in \fn^A_0(\bft)/\Delta^n_A\subset  A[[\bft]]_{\Delta^n}.
$$
So, we have truncations $\tau_n: \Sub_A(\bfs,\bft;\nabla,\Delta)  \longrightarrow  \Sub_A(\bfs,\bft;\nabla^n,\Delta^n)$, for $n\geq 0$.
\medskip

We may also add two substitution maps $\varphi,\varphi':A[[\bfs]] \xrightarrow{} A[[\bft]]_\Delta$ to obtain
a new substitution map $\varphi+\varphi':A[[\bfs]] \xrightarrow{} A[[\bft]]_\Delta$ 
determined by\footnote{Pay attention that $(\varphi+\varphi')(r) \neq \varphi(r) + \varphi'(r)$ for arbitrary $r\in A[[\bfs]]_\nabla$.}:
$$ (\varphi+\varphi')(s) = \varphi(s)+\varphi'(s),\quad \text{for all\ }\ s\in \bfs.
$$
It is clear that $\Sub_A(\bfs,\bft;\N^{(\bfs)},\Delta)$ becomes an abelian group with the addition, the zero element being the trivial substitution map $\mathbf{0}$.
\smallskip

If $\psi: A[[\bft]]_\Delta \xrightarrow{} A[[\bfu]]_\Omega$ is another substitution map, we clearly have
$$\psi \pcirc (\varphi+\varphi') = \psi \pcirc \varphi + \psi \pcirc \varphi'.$$ 
However, if $\psi: A[[\bfu]] \xrightarrow{} A[[\bfs]]$ is a substitution map, we have in general
$$ (\varphi+\varphi')\pcirc \psi \neq \varphi\pcirc \psi +\varphi'\pcirc \psi.
$$

\begin{definition} We say that a substitution map $\varphi: A[[\bft]]_\nabla \xrightarrow{} A[[\bfu]]_\Delta $ has {\em constant coefficients} \index{substitution maps with constant coefficients}
if $c^t_\beta \in k$ for all $t\in \bft$ and all $\beta\in\Delta$, where
$$\varphi(t)=c^t = 
\sum_{\substack{\scriptscriptstyle \beta\in\Delta\\ \scriptscriptstyle 0<|\beta| }} c^t_\beta \bfu^\beta \in \fn^A_0(\bfu)/\Delta_A  \subset  A[[\bfu]]_\Delta.
$$
This is equivalent to saying that
 ${\bf C}_e(\varphi,\alpha)\in k$ for all $e\in\Delta$ and for all $\alpha\in \nabla$ with $0<|\alpha|\leq |e|$. Substitution maps which constant coefficients are induced by substitution maps $k[[\bft]]_\nabla \xrightarrow{} k[[\bfu]]_\Delta$.

We say that a substitution map $\varphi: A[[\bft]]_\nabla \xrightarrow{} A[[\bfu]]_\Delta $ is {\em combinatorial} \index{combinatorial substitution maps} if $\varphi(t) \in \bfu$ for all $t\in\bft$. A combinatorial substitution map has constant coefficients and is determined by (and determines) a map $\bft \to \bfu$, necessarily with finite fibers. If $\iota : \bft \to \bfu$ is such a map, we will also denote by $\iota: A[[\bft]]_\nabla \xrightarrow{} A[[\bfu]]_{\iota_*(\nabla)} $ the corresponding substitution map, with
$$ \iota_*(\nabla):= \{\beta \in \N^{(\bfu)}\ |\ \beta \pcirc \iota \in \nabla\}.
$$
\end{definition}

\numero \label{nume:cont-maps-power-series-rings}
Let $\varphi: A[[\bfs]]_\nabla \xrightarrow{} A[[\bft]]_\Delta $ be a continuous $A$-linear map. It is determined by the family
$ K =\{ K_{e,\alpha}, e\in \Delta, \alpha \in\nabla \} \subset A$, with
$\displaystyle
\varphi( \bfs^\alpha ) = 
\sum_{ \scriptscriptstyle e\in \Delta}    K_{e,\alpha} \bft^e$. We will assume that 
\begin{itemize}
\item $\varphi$ is compatible with the order filtration, i.e. $\varphi (\nabla^n_A/\nabla_A) \subset \Delta^n_A/\Delta_A$ for all $n\geq 0$. 
\item $\varphi$ is compatible with the natural augmentations $A[[\bfs]]_\nabla \to A$ and $A[[\bft]]_\Delta \to A$.
\end{itemize}
These properties are equivalent to the fact that $K_{e,\alpha}=0$ whenever $|\alpha|>|e|$ and $K_{0,0}=1$.
\medskip

Let $ K =\{ K_{e,\alpha}$, $e\in \Delta, \alpha \in\nabla$, $|\alpha|\leq |e| \}$ be a family of elements of $A$ with 
$$\# \{\alpha\in\nabla\ |\ |\alpha|\leq |e|, K_{e,\alpha}\neq 0\} < +\infty,\ \  \forall e\in \Delta,
$$
and $K_{0,0} = 1$, 
and 
let $\varphi: A[[\bfs]]_\nabla \xrightarrow{} A[[\bft]]_\Delta $ be the $A$-linear map given by 
$$ \varphi\left( \sum_{\scriptscriptstyle\alpha\in\nabla} a_\alpha \bfs^\alpha \right) = 
\sum_{\scriptscriptstyle e\in \Delta}
\left(  \sum_{\substack{\scriptscriptstyle \alpha\in\nabla\\ \scriptscriptstyle |\alpha|\leq |e| }} K_{e,\alpha} a_\alpha \right) \bft^e.
$$
It is clearly continuous and since 
$\displaystyle
\varphi( \bfs^\alpha ) = 
\sum_{\substack{ \scriptscriptstyle e\in\Delta\\ \scriptscriptstyle  |\alpha|\leq |e|}}    K_{e,\alpha} \bft^e, 
$
it determines the family $K$.

\begin{proposition} \label{prop:identidad-Cs}
With the above notations, the following properties are equivalent:
\begin{enumerate}
\item[(a)] $\varphi$ is a substitution map.
\item[(b)] For each $\mu,\nu \in\nabla$ and for each $e\in \Delta$ with $|\mu+\nu|\leq |e|$, the following equality holds:
$$ K_{e,\mu+\nu} =
 \sum_{\substack{\scriptscriptstyle \beta+\gamma=e\\ \scriptscriptstyle |\mu|\leq |\beta|, |\nu|\leq |\gamma|  }} K_{\beta,\mu} K_{\gamma,\nu}. 
$$
Moreover, if the above equality holds, then $K_{e,0}=0$ whenever $|e|>0$ and
$\varphi$ is the substitution map determined by
$$  \varphi(u) = \sum_{\substack{\scriptscriptstyle e \in \Delta \\ \scriptscriptstyle 0 <|e| }} K_{e,\bfs^u} \bft^e,\quad u\in \bfs.
$$ 
\end{enumerate}
\end{proposition}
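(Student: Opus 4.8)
The plan is to test both conditions against the monomial basis of $A[[\bfs]]_\nabla$, exploiting that $\varphi$ is already continuous and $A$-linear: being a substitution map then reduces to multiplicativity, and multiplicativity --- by density and continuity --- reduces to multiplicativity on monomials.

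For $(a)\Rightarrow(b)$: a substitution map is in particular an $A$-algebra homomorphism, so $\varphi(\bfs^\mu)\,\varphi(\bfs^\nu)=\varphi(\bfs^\mu\bfs^\nu)$ for $\mu,\nu\in\nabla$, where $\bfs^\mu\bfs^\nu$ is $\bfs^{\mu+\nu}$ if $\mu+\nu\in\nabla$ and is $0$ in $A[[\bfs]]_\nabla$ otherwise (in the latter case one reads $K_{e,\mu+\nu}=0$ by convention, and the stated identity just records $\varphi(\bfs^\mu)\varphi(\bfs^\nu)=0$). In either case, substituting $\varphi(\bfs^\alpha)=\sum_{e\in\Delta}K_{e,\alpha}\bft^e$ and expanding the product by the Cauchy rule in $A[[\bft]]_\Delta$, comparison of the coefficients of $\bft^e$ gives $K_{e,\mu+\nu}=\sum_{\beta+\gamma=e}K_{\beta,\mu}K_{\gamma,\nu}$; since $K_{\beta,\mu}=0$ whenever $|\beta|<|\mu|$ (and symmetrically for $\gamma,\nu$), the range of summation shrinks to the one in (b), and both sides vanish when $|e|<|\mu+\nu|$ --- a purely formal step. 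For the ``moreover'' clause I would, conversely, feed (b) with $\mu=\nu=0$ into an induction on $|e|$: the identity becomes $K_{e,0}=\sum_{\beta+\gamma=e}K_{\beta,0}K_{\gamma,0}$, the inductive hypothesis annihilates every summand with $0<|\beta|<|e|$ (and symmetrically those with $\gamma$), leaving $K_{e,0}=2K_{e,0}$, hence $K_{e,0}=0$ for $|e|>0$; with $K_{0,0}=1$ this gives $\varphi(\bfs^0)=1$.

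For $(b)\Rightarrow(a)$: read backwards, the identity above is exactly $\varphi(\bfs^\mu)\varphi(\bfs^\nu)=\varphi(\bfs^\mu\bfs^\nu)$ for all $\mu,\nu\in\nabla$. To promote this to full multiplicativity I would fix $\nu$ and observe that $a\mapsto\varphi(a\,\bfs^\nu)$ and $a\mapsto\varphi(a)\varphi(\bfs^\nu)$ are continuous $A$-linear maps $A[[\bfs]]_\nabla\to A[[\bft]]_\Delta$ (multiplication by a fixed element of a topological ring is continuous, and $\varphi$ is continuous) that agree on every monomial; since the finitely supported series are dense in $A[[\bfs]]_\nabla$ (Section~1), they coincide, and the same argument in the second variable gives $\varphi(ab)=\varphi(a)\varphi(b)$ throughout. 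Together with $\varphi(1)=1$, $\varphi$ is an $A$-algebra map, and it meets the remaining requirements of Definition \ref{def:substitution_maps}: (1) is the hypothesis; (2) holds because the constant term of $\varphi(\bfs^u)=\sum_{e\in\Delta}K_{e,\bfs^u}\bft^e$ is $K_{0,\bfs^u}=0$, as $|\bfs^u|=1>0$; (3) holds because $\{\,u\in\bfs\mid K_{\beta,\bfs^u}\ne 0\,\}\subset\{\,\alpha\in\nabla\mid|\alpha|\le|\beta|,\ K_{\beta,\alpha}\ne 0\,\}$, which is finite for $|\beta|\ge 1$ and empty for $\beta=0$. Hence $\varphi$ is the substitution map associated with $c=\{\,c^u=\varphi(\bfs^u)=\sum_{e\in\Delta,\,|e|>0}K_{e,\bfs^u}\bft^e\mid u\in\bfs\,\}$.

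The only step requiring genuine care is the density/continuity extension in $(b)\Rightarrow(a)$ --- that multiplication by a fixed element of $A[[\bft]]_\Delta$ is continuous and that polynomials are dense in $A[[\bfs]]_\nabla$, both of which are furnished by the generalities of Section~1. Everything else is bookkeeping with the Cauchy product and the convention $K_{e,\alpha}=0$ for $|\alpha|>|e|$.
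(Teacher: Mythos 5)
Your proof is correct, but it follows a genuinely different route from the paper's in both directions. For (a)\,$\Rightarrow$\,(b) you simply invoke multiplicativity of $\varphi$ on monomials, $\varphi(\bfs^\mu)\varphi(\bfs^\nu)=\varphi(\bfs^{\mu+\nu})$, expand by the Cauchy product and compare coefficients of $\bft^e$; the paper instead works with the explicit partition formula $K_{e,\alpha}={\bf C}_e(\varphi,\alpha)=\sum_{\Par(e,\alpha)}C_{(\cdot)}$ and builds a bijection between $\Par(e,\mu+\nu)$ and the tuples $(\beta,\gamma,\cdot,\cdot)$ with $\beta+\gamma=e$. Your version is shorter and isolates the real content of (b) --- multiplicativity on monomials --- while the paper's combinatorial argument has the side benefit of establishing the identity at the level of the universal quasi-homogeneous polynomials ${\bf C}_e$, but for the proposition as stated both suffice. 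For (b)\,$\Rightarrow$\,(a) you promote monomial multiplicativity to full multiplicativity by a density-plus-continuity argument in each variable separately, relying on the facts from Section~1 that polynomials are dense, that $A[[\bft]]_\Delta$ is separated, and that multiplication by a fixed element is continuous (all of which are indeed available there, since $\supp(ab)\subset\supp(a)+\supp(b)$ and $\fn^A_\beta$ is the complement of an ideal of $\N^{(\bfs)}$); the paper instead verifies $\varphi(ab)=\varphi(a)\varphi(b)$ by a direct reindexing of the double sum on arbitrary series. These are equivalent in substance; yours trades the explicit summation bookkeeping for topological generalities. Your induction giving $K_{e,0}=0$ for $|e|>0$ is exactly the step the paper leaves as ``one easily proves,'' and your verification of conditions (2) and (3) of Definition~\ref{def:substitution_maps} is correct (the injection $u\mapsto\bfs^u$ into the norm-one multi-indices is what makes (3) follow from the finiteness hypothesis on $K$). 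One point where you are actually more careful than the source: the identity in (b) involves $K_{e,\mu+\nu}$ even when $\mu+\nu\notin\nabla$, where the family $K$ is not defined; your explicit convention $K_{e,\mu+\nu}:=0$ in that case is exactly what is needed so that (b) also encodes $\varphi(\bfs^\mu)\varphi(\bfs^\nu)=0$ when $\bfs^{\mu+\nu}$ vanishes in the quotient, a case the paper's computation passes over silently.
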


\begin{proof}
(a) $\Rightarrow$ (b) If $\varphi$ is a substitution map, there is a family
$$c^s = 
\sum_{\scriptscriptstyle \beta\in\Delta} c^s_\beta \bft^\beta \in   A[[\bft]]_\Delta,\ \  s\in \bfs,
$$
such that $\varphi(s) = c^s$. So, from (\ref{eq:exp-substi}), we deduce
$$ K_{e,\alpha}= {\bf C}_e(\varphi,\alpha)=  \sum_{\scriptstyle \mathcal{f}^{\bullet\bullet} \in \Par(e,\alpha)}
C_{\mathcal{f}^{\bullet\bullet}}
 \quad \text{for\ }\ |\alpha| \leq |e|,
$$
with
$ \displaystyle C_{\mathcal{f}^{\bullet\bullet}} = \prod_{\scriptstyle s\in \supp \alpha} \prod_{\scriptstyle r=1}^{\scriptstyle \alpha_s}  c^s_{\mathcal{f}^{sr}}.
$
\smallskip

For each ordered pair $(r,s)$ of non-negative integers there are natural injective maps
$$ i \in [r] \mapsto i \in [r+s],\quad i \in [s] \mapsto r+i \in [r+s]
$$
inducing a natural bijection $[r]\sqcup [s] \longleftrightarrow [r+s]$. Consequently, for $(\mu,\nu) \in\N^{(\bfs)}\times \N^{(\bfs)}$ there are natural injective maps
$ [\mu]  \hookrightarrow [\mu+\nu]  \hookleftarrow [\nu] 
$
inducing a natural bijection
$[\mu]\sqcup [\nu] \longleftrightarrow [\mu+\nu]$.
 So, for each $e\in \N^{(\bft)}$ and each $\mathcal{f}^{\bullet\bullet} \in \Par(e,\mu + \nu)$, we can consider the restrictions $\mathcal{g}^{\bullet\bullet} = \mathcal{f}^{\bullet\bullet}|_{[\mu]} \in \Par(\beta,\mu)$, $\mathcal{h}^{\bullet\bullet} = \mathcal{f}^{\bullet\bullet}|_{[\nu]} \in \Par(\gamma,\nu)$, with $\beta = | \mathcal{g}^{\bullet\bullet} |$ and $\gamma = | \mathcal{h}^{\bullet\bullet} |$, $\beta+\gamma=e$. The correspondence $ \mathcal{f}^{\bullet\bullet } \longmapsto (\beta,\gamma,\mathcal{g}^{\bullet\bullet}, \mathcal{h}^{\bullet\bullet})$
 establishes a bijection between $\Par(e,\mu + \nu) $ and the set of 
 $(\beta,\gamma,\mathcal{g}^{\bullet\bullet}, \mathcal{h}^{\bullet\bullet})$ with $\beta,\gamma\in \N^{(\bft)}$, $\mathcal{g}^{\bullet\bullet} \in \Par(\beta,\mu)$, 
$\mathcal{h}^{\bullet\bullet} \in \Par(\gamma,\nu)$ and $|\beta|\geq |\mu|, |\gamma|\geq |\nu|,\beta + \gamma =e$.
\inhibe
$$ \{ (\beta,\gamma,\mathcal{g}^{\bullet\bullet}, \mathcal{h}^{\bullet\bullet})\ |\ \beta,\gamma\in \N^{(\bft)}, \mathcal{g}^{\bullet\bullet} \in \Par(\beta,\mu), 
\mathcal{h}^{\bullet\bullet} \in \Par(\gamma,\nu), |\beta|\geq |\mu|, |\gamma|\geq |\nu|,\beta + \gamma =e \},
$$
\endinhibe
Moreover, under this bijection we have
$C_{\mathcal{f}^{\bullet\bullet}} = C_{\mathcal{g}^{\bullet\bullet}} C_{\mathcal{h}^{\bullet\bullet}}$ and we deduce
\begin{eqnarray*}
& 
\displaystyle K_{e,\mu+\nu}={\bf C}_e(\varphi,\mu + \nu)= \sum_{\scriptstyle \mathcal{f}^{\bullet\bullet} } C_{\mathcal{f}^{\bullet\bullet}} = \sum_{\substack{\scriptscriptstyle \beta+\gamma=e\\ \scriptscriptstyle |\mu|\leq |\beta|\\ \scriptscriptstyle  |\nu|\leq |\gamma|  }} \sum_{\substack{\scriptscriptstyle \mathcal{g}^{\bullet\bullet}, \mathcal{h}^{\bullet\bullet} }} C_{\mathcal{g}^{\bullet\bullet}} C_{\mathcal{h}^{\bullet\bullet}} =&\\
&
\displaystyle 
 \sum_{\substack{\scriptscriptstyle \beta+\gamma=e\\ \scriptscriptstyle |\mu|\leq |\beta|\\ \scriptscriptstyle  |\nu|\leq |\gamma|  }} \left( \sum_{\scriptscriptstyle \mathcal{g}^{\bullet\bullet} } C_{\mathcal{g}^{\bullet\bullet}} \right)  \left( \sum_{\scriptscriptstyle \mathcal{h}^{\bullet\bullet}} C_{\mathcal{h}^{\bullet\bullet}} \right) = \sum_{\substack{\scriptscriptstyle \beta+\gamma=e\\ \scriptscriptstyle |\mu|\leq |\beta|\\ \scriptscriptstyle  |\nu|\leq |\gamma|  }} {\bf C}_\beta(\varphi,\mu )
 {\bf C}_\gamma(\varphi,\nu ) = \sum_{\substack{\scriptscriptstyle \beta+\gamma=e\\ \scriptscriptstyle |\mu|\leq |\beta|\\ \scriptscriptstyle  |\nu|\leq |\gamma|  }}
 K_{\beta,\mu} K_{\gamma,\nu}.
\end{eqnarray*}
where $\mathcal{f}^{\bullet\bullet}\in \Par(e,\mu + \nu)$, $\mathcal{g}^{\bullet\bullet} \in \Par(\beta,\mu)$ and $\mathcal{h}^{\bullet\bullet} \in \Par(\gamma,\nu)$.
\medskip

\noindent
(b) $\Rightarrow$ (a) First, one easily proves by induction on $|e|$ that $K_{e,0}=0$ whenever $|e|>0$, and so $\varphi(1) = \varphi(\bfs^0) = K_{0,0}=1$.
Let 
$ a = \sum_\alpha a_\alpha \bfs^\alpha, b = \sum_\alpha b_\alpha \bfs^\alpha $
be elements in $A[[t]]_\Delta$, and $c=ab=\sum_\alpha c_\alpha \bfs^\alpha$ with 
$c_\alpha= \sum_{\mu+\nu=\alpha} a_{\mu} b_{\nu}$. We have:
\begin{eqnarray*}
&
\displaystyle \varphi (ab) = \varphi (c) = \sum_{\scriptscriptstyle e\in\Delta} \left( \sum_{\substack{\scriptscriptstyle \alpha\in\nabla  \\ \scriptscriptstyle |\alpha|\leq |e|}}
K_{e,\alpha} c_\alpha\right) \bft^e = \sum_{\scriptscriptstyle e\in\Delta} \left( \sum_{\substack{ \scriptscriptstyle \mu,\nu\in\nabla\\  \scriptscriptstyle |\mu+\nu|\leq |e|}}
K_{e,\mu+\nu} a_{\mu} b_{\nu}\right) \bft^e =&\\
&
\displaystyle 
\sum_e \left( \sum_{\scriptscriptstyle |\mu+\nu|\leq |e|}
\sum_{\substack{\scriptscriptstyle \beta+\gamma=e\\ \scriptscriptstyle |\mu|\leq |\beta|, |\nu|\leq |\gamma|  }} K_{\beta,\mu} K_{\gamma,\nu}
a_{\mu} b_{\nu}\right) \bft^e = \dots = \varphi (a) \varphi (b).
\end{eqnarray*}
We conclude that $\varphi$ is a (continuous) $A$-algebra map determined by the images
$$
\varphi(u) = \varphi\left(\bfs^{\bfs^u}\right)=\sum_{\substack{\scriptscriptstyle e \in \Delta \\ \scriptscriptstyle 0 <|e| }} K_{e,\bfs^u} \bft^e,\quad u\in \bfs,
$$
(remember that $\{\bfs^u\}_{u\in\bfs}$ is the canonical basis of $\N^{(\bfs)}$) and so it is a substitution map.
\end{proof}

\begin{definition} \label{defi:tensor-prod-of-substi}
The {\em tensor product} \index{tensor product (of substitution maps)} of two substitution maps $\varphi:A[[\bfs]]_\nabla \to A[[\bft]]_\Delta$, $\psi:A[[\bfu]]_{\nabla'} \to A[[\bfv]]_{\Delta'}$ is the unique substitution map
$$ \varphi \otimes \psi: A[[\bfs\sqcup \bfu]]_{\nabla \times \nabla'} \longrightarrow A[[\bft\sqcup \bfv]]_{\Delta \times \Delta'}
$$
making commutative the following diagram
$$
\begin{CD}
A[[\bfs]]_\nabla @>>> A[[\bfs\sqcup \bfu]]_{\nabla \times \nabla'} @<<< A[[\bfu]]_{\nabla'} \\
@VV{\varphi}V @VV{\varphi \otimes \psi}V @VV{\psi}V \\
A[[\bft]]_\Delta @>>> A[[\bft\sqcup \bfv]]_{\Delta \times \Delta'} @<<< A[[\bfv]]_{\Delta'},
\end{CD}
$$
where the horizontal arrows are the combinatorial substitution maps induced by the inclusions $\bfs, \bfu \hookrightarrow \bfs\sqcup \bfu$, $\bft, \bfv \hookrightarrow \bft\sqcup \bfv$\footnote{Let us notice that there are canonical continuous isomorphisms of $A$-algebras $A[[\bfs\sqcup \bfu]]_{\nabla \times \nabla'} \simeq A[[\bfs]]_\nabla \widehat{\otimes}_A A[[\bfu]]_{\nabla'}$,  
$A[[\bfs\sqcup \bfu]]_{\Delta \times \Delta'} \simeq A[[\bfs]]_\Delta \widehat{\otimes}_A A[[\bfu]]_{\Delta'}$.}.
\end{definition}

For all $(\alpha,\beta)\in \nabla \times \nabla' \subset  \N^{(\bfs)} \times \N^{(\bfu)} \equiv \N^{(\bfs\sqcup \bfu)}$ we have
$$ (\varphi \otimes \psi)(\bfs^\alpha \bfu^\beta) =  \varphi (\bfs^\alpha) \psi(\bfu^\beta) =
\cdots = 
\sum_{\substack{\scriptscriptstyle  e\in \Delta, f\in \Delta'\\ \scriptscriptstyle  |e|\geq |\alpha| \\ \scriptscriptstyle |f|\geq |\beta|  }} 
{\bf C}_e(\varphi,\alpha) {\bf C}_f(\psi,\beta) \bft^e \bfv^f
$$
and so, for all $(e,f)\in \Delta \times \Delta'$ and all $(\alpha,\beta) \in \nabla \times \nabla'$
 with $ |e|+|f|= |(e,f)| \geq |(\alpha,\beta)| = |\alpha|+|\beta|$ we have
$$
{\bf C}_{(e,f)}(\varphi\otimes \psi,(\alpha,\beta)) = \left\{ 
\begin{array}{ll}
{\bf C}_e(\varphi,\alpha) {\bf C}_f(\psi,\beta) & \text{if\ }\ |\alpha|\leq |e|\ \text{and\ }\ |\beta|\leq |f|, \\
0 & \text{otherwise}.
\end{array}  \right.
$$


\section{The action of substitution maps} \label{sec:action-substi}

In this section $k$ will be a commutative ring, $A$ a commutative $k$-algebra,
$M$ an $(A;A)$-bimodule, $\bfs$ and $\bft$ sets and $\nabla\subset\N^{(\bfs)}$, $\Delta\subset \N^{(\bft)}$ non-empty co-ideals.
\medskip

Any $A$-linear continuous map $\varphi: A[[\bfs]]_\nabla \to A[[\bft]]_\Delta$ satisfying the assumptions in \ref{nume:cont-maps-power-series-rings} induces 
$(A;A)$-linear maps  
$$\varphi_M := \varphi \widehat{\otimes} \Id_M : M[[\bfs]]_\nabla \equiv A[[\bfs]]_\Delta \widehat{\otimes}_A M \longrightarrow M[[\bft]]_\Delta \equiv A[[\bft]]_\Delta \widehat{\otimes}_A M
$$ 
and 
$$\sideset{_M}{}\opvarphi := \Id_M \widehat{\otimes} \varphi  : M[[\bfs]]_\nabla \equiv M \widehat{\otimes}_A A[[\bfs]]_\nabla \longrightarrow M[[\bft]]_\Delta \equiv M \widehat{\otimes}_A A[[\bft]]_\Delta.
$$
If $\varphi$ is determined by the family
$ K =\{ K_{e,\alpha}, e\in \nabla, \alpha \in\Delta, |\alpha|\leq |e| \} \subset A$, with
$\displaystyle
\varphi( \bfs^\alpha ) = 
\sum_{\substack{\scriptscriptstyle e\in \Delta\\ \scriptscriptstyle |e|\geq |\alpha| }}    K_{e,\alpha} \bft^e$, then
\begin{eqnarray*}
&\displaystyle \varphi_M \left(\sum_{ \scriptscriptstyle \alpha\in\nabla} m_\alpha \bfs^\alpha \right) = \sum_{\scriptscriptstyle \alpha\in\nabla} \varphi( \bfs^\alpha ) m_\alpha = \sum_{ \scriptscriptstyle e\in \Delta}   \left( \sum_{\substack{\scriptscriptstyle \alpha\in\nabla\\ \scriptscriptstyle  |\alpha|\leq |e| } }
K_{e,\alpha} m_\alpha \right) \bft^e,\quad m\in M[[\bfs]]_\nabla,
&\\
&\displaystyle 
\sideset{_M}{}\opvarphi \left(\sum_{ \scriptscriptstyle \alpha\in\nabla} m_\alpha \bfs^\alpha \right) = \sum_{\scriptscriptstyle \alpha\in\nabla}  m_\alpha \varphi( \bfs^\alpha ) = \sum_{ \scriptscriptstyle e\in\Delta }   \left( \sum_{ \substack{\scriptscriptstyle \alpha\in\nabla\\ \scriptscriptstyle  |\alpha|\leq |e| } } 
m_\alpha K_{e,\alpha}  \right) \bft^e,\quad m\in M[[\bfs]]_\nabla.&
\end{eqnarray*}
If $\varphi': A[[\bft]]_\Delta \to A[[\bfu]]_\Omega$ is another $A$-linear continuous map satisfying the assumptions in \ref{nume:cont-maps-power-series-rings} and $\opvarphi'' = \opvarphi\pcirc \opvarphi'$, we have
$\opvarphi''_M = \varphi_M \pcirc \varphi'_M$, $\sideset{_M}{}\opvarphi'' = \sideset{_M}{}\opvarphi \pcirc \sideset{_M}{}\opvarphi'$. 
\bigskip

If $\varphi: A[[\bfs]]_\nabla \to A[[\bft]]_\Delta$ is a substitution map and $m\in M[[\bfs]]_\nabla$,  $a \in A[[\bfs]]_\nabla$, we have
$$ \varphi_M (a m) = \varphi(a) \varphi_M(m),\ \sideset{_M}{}\opvarphi (m a) =  \sideset{_M}{}\opvarphi (m) \varphi(a),
$$
i.e. $\varphi_M$ is $(\varphi;A)$-linear and $\sideset{_M}{}\opvarphi$ is $(A;\varphi)$-linear. Moreover, $\varphi_M$ and $\sideset{_M}{}\opvarphi$ are compatible with the augmentations, i.e.
\begin{equation} \label{eq:compat-augment}
\varphi_M(m) \equiv m_0,\sideset{_M}{}\opvarphi(m) \equiv m_0
\!\!\!\!\mod \fn^M_0(\bft)/\Delta_M,\quad m\in M[[\bfs]]_\nabla.
\end{equation}
If $\varphi$ is the trivial substitution map (i.e. $\varphi(s)=0$ for all $s\in\bfs$), then $\varphi_M : M[[\bfs]]_\nabla \to M[[\bft]]_\Delta$ and $\sideset{_M}{}\opvarphi: M[[\bfs]]_\nabla \to M[[\bft]]_\Delta$ are also trivial, i.e.
$$ \varphi_M (m) = \sideset{_M}{}\opvarphi
(m) = m_0,\ m\in M[[\bfs]]_\nabla.
$$

\numero \label{nume:def-sbullet}
The above constructions apply in particular to the case of any $k$-algebra $R$ over $A$, for which we have two induced continuous maps,
$\varphi_R= \varphi \widehat{\otimes} \Id_R : R[[\bfs]]_\nabla \to R[[\bft]]_\Delta$, which is $(A;R)$-linear, and  $\sideset{_R}{}\opvarphi= \Id_R \widehat{\otimes} \varphi  : R[[\bfs]]_\nabla \to R[[\bft]]_\Delta$, which is $(R;A)$-linear. 
\medskip

For $r\in R[[\bfs]]_\nabla$ we will denote
$$ \varphi \sbullet r := \varphi_R(r),\quad r \sbullet \varphi := \sideset{_R}{}\opvarphi(r).
$$
Explicitely, if $r=\sum_\alpha
r_\alpha \bfs^\alpha$ with $\alpha\in\nabla$, then
\begin{equation} \label{eq:explicit-bullet}
\varphi \sbullet r
 = 
\sum_{\scriptscriptstyle e \in\Delta}
\left( \sum_{\substack{\scriptscriptstyle \alpha\in\nabla\\ \scriptscriptstyle  |\alpha|\leq |e|}} {\bf C}_e(\varphi,\alpha) r_\alpha \right) \bft^e,\quad
 r \sbullet \varphi 
 = 
\sum_{\scriptscriptstyle e \in\Delta}
\left( \sum_{\substack{\scriptscriptstyle \alpha\in\nabla\\ \scriptscriptstyle  |\alpha|\leq |e|}} r_\alpha {\bf C}_e(\varphi,\alpha)  \right) \bft^e.
\end{equation}
From (\ref{eq:compat-augment}), we deduce that 
$\varphi_R(\U^\bfs(R;\nabla)) \subset \U^\bft(R;\Delta)$ and $\sideset{_R}{}\opvarphi(\U^\bfs(R;\nabla)) \subset \U^\bft(R;\Delta)$. 
We also have $\varphi \sbullet 1 = 1 \sbullet \varphi = 1$. 
\medskip

\noindent
If $\varphi$ is a substitution map with \underline{constant coefficients}, then $\varphi_R = \sideset{_R}{}\opvarphi$ is a ring homomorphism over $\varphi$. 
In particular, $\varphi \sbullet r = r \sbullet \varphi$ and $\varphi \sbullet (rr') = (\varphi \sbullet r) (\varphi \sbullet r')$.
\medskip

\noindent 
If $\varphi = \mathbf{0}: A[[\bfs]]_\nabla \to A[[\bft]]_\Delta$ is the trivial substitution map, then 
$\mathbf{0} \sbullet r = r\sbullet \mathbf{0} = r_0$ for all $r\in R[[\bfs]]_\nabla$. In particular,
$\mathbf{0} \sbullet r = r \sbullet \mathbf{0} = 1$ for all $r\in \U^\bfs(R;\nabla)$.
\medskip

\noindent 
If $\psi:R[[\bft]]_\Delta \to R[[\bfu]]_\Omega$ is another substitution map, one has
$$ \psi \sbullet (\varphi \sbullet r) = (\psi \pcirc \varphi) \sbullet r,\quad
(r \sbullet \varphi) \sbullet \psi  =  r \sbullet (\psi \pcirc \varphi).
$$
Since $\left(R[[\bfs]]_\nabla\right)^{\text{opp}} = R^{\text{opp}}[[\bfs]]_\nabla$, for any substitution map $\varphi: A[[\bfs]]_\nabla \to A[[\bft]]_\Delta$  we have $\left( \varphi_R \right)^{\text{opp}} = \sideset{_{R^{\text{opp}}}}{}\opvarphi$ and 
$\left( \sideset{_R}{}\opvarphi \right)^{\text{opp}} = \varphi_{R^{\text{opp}}}$. 
\medskip

The proof of the following lemma is straighforward and it is left to the reader.

\begin{lemma} \label{lemma:phi-linearity-of-phi_R}
If $\varphi: A[[\bfs]]_\nabla \to A[[\bft]]_\Delta$ is a substitution map, then:
\begin{enumerate}
\item[(i)] $\varphi_R$ is left $\varphi$-linear, i.e. $\varphi_R(ar) = \varphi(a) \varphi_R(r)$ for all $a\in A[[\bfs]]_\nabla$ and for all $r\in R[[\bfs]]_\nabla$.
\item[(ii)] $\sideset{_R}{}\opvarphi$ is right $\varphi$-linear, i.e. $\sideset{_R}{}\opvarphi(ra)= \sideset{_R}{}\opvarphi(r) \varphi(a)$  for all $a\in A[[\bfs]]_\nabla$ and for all $r\in R[[\bfs]]_\nabla$.
\end{enumerate}
\end{lemma}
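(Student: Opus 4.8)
The plan is to deduce both assertions from the functoriality of $\varphi_R = \varphi\,\widehat{\otimes}\,\Id_R$, reducing (ii) to (i) by passing to the opposite ring. Recall (Section \ref{sec:action-substi}) that under the canonical identification $R[[\bfs]]_\nabla \equiv A[[\bfs]]_\nabla\,\widehat{\otimes}_A R$ of (\ref{eq:A[[s]]wotM}) one has $\varphi_R = \varphi\,\widehat{\otimes}\,\Id_R$, and that this identification carries left multiplication by $\overline{\iota}(a)$ on $R[[\bfs]]_\nabla$ (for $a\in A[[\bfs]]_\nabla$) to the operator $L_a\,\widehat{\otimes}\,\Id_R$, where $L_a$ denotes multiplication by $a$ on $A[[\bfs]]_\nabla$; this is immediate by unwinding the isomorphism on elementary tensors. (Thus the lemma is literally the case $M=R$ of the $(\varphi;A)$- and $(A;\varphi)$-linearity of $\varphi_M$ and $\sideset{_M}{}\opvarphi$ recorded in Section \ref{sec:action-substi}; what follows gives the short argument.)

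For (i): since $\varphi$ is a ring homomorphism, $\varphi\pcirc L_a = L_{\varphi(a)}\pcirc\varphi$ as continuous $A$-linear maps $A[[\bfs]]_\nabla\to A[[\bft]]_\Delta$. Applying the functor $(-)\,\widehat{\otimes}_A R$ and invoking functoriality of the completed tensor product yields $\varphi_R\pcirc(L_a\,\widehat{\otimes}\,\Id_R) = (L_{\varphi(a)}\,\widehat{\otimes}\,\Id_R)\pcirc\varphi_R$, which is exactly $\varphi_R(ar) = \varphi(a)\,\varphi_R(r)$ for $a\in A[[\bfs]]_\nabla$, $r\in R[[\bfs]]_\nabla$. (Equivalently: by continuity of $\varphi_R$ and of the multiplication of $R[[\bft]]_\Delta$ together with density of the polynomials, one reduces to monomials $a=a_\alpha\bfs^\alpha$, $r=r_\beta\bfs^\beta$ with $a_\alpha\in A$, $r_\beta\in R$, $\alpha,\beta\in\nabla$; since the $\bfs^\gamma$ are central, $ar=\overline{\iota}(a_\alpha)r_\beta\bfs^{\alpha+\beta}$, and both sides reduce, via (\ref{eq:explicit-bullet}), the multiplicativity of $\varphi$, and the commutativity of $A[[\bft]]_\Delta$, to $\overline{\iota}\bigl(a_\alpha\varphi(\bfs^{\alpha+\beta})\bigr)r_\beta$ — both vanishing if $\alpha+\beta\notin\nabla$.)

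For (ii): apply (i) to the $k$-algebra $R^{\text{opp}}$ over $A$ (legitimate, since $A$ is commutative, so $\iota\colon A\to R^{\text{opp}}$ is still a $k$-algebra map). By \ref{nume:def-sbullet} we have $\sideset{_R}{}\opvarphi = \varphi_{R^{\text{opp}}}$ as maps of underlying sets, $R^{\text{opp}}[[\bfs]]_\nabla$ being $(R[[\bfs]]_\nabla)^{\text{opp}}$; and the product $ra$ of the statement is precisely the left action of $a\in A[[\bfs]]_\nabla$ on $r$ computed inside $R^{\text{opp}}[[\bfs]]_\nabla$. Hence part (i) for $R^{\text{opp}}$ gives $\varphi_{R^{\text{opp}}}(ra)=\varphi(a)\,\varphi_{R^{\text{opp}}}(r)$, which, read back in $R[[\bft]]_\Delta$, is $\sideset{_R}{}\opvarphi(ra)=\sideset{_R}{}\opvarphi(r)\,\varphi(a)$, as desired.

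There is no genuine obstacle here; the content is purely bookkeeping with the various module structures. The only two points meriting a moment's care are: that the isomorphism (\ref{eq:A[[s]]wotM}) really intertwines left multiplication by $\overline{\iota}(a)$ with $L_a\,\widehat{\otimes}\,\Id_R$ (immediate on elementary tensors, using centrality of the $\bfs^\gamma$); and, in the monomial computation, that it is exactly the commutativity of $A$ — equivalently of $A[[\bft]]_\Delta$, into which $\varphi$ maps — that lets the scalar $a_\alpha$ be moved across $\varphi(\bfs^{\alpha+\beta})$. Neither is a real difficulty.
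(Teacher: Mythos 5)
Your proof is correct. The paper gives no argument for this lemma (it is explicitly left to the reader), so there is nothing to compare against; your derivation of (i) from the identity $\varphi\pcirc L_a=L_{\varphi(a)}\pcirc\varphi$ (multiplicativity of $\varphi$) transported through $(-)\,\widehat{\otimes}_A R$, with the monomial computation as a cross-check, and your reduction of (ii) to (i) via the identity $\left(\sideset{_R}{}\opvarphi\right)^{\text{opp}}=\varphi_{R^{\text{opp}}}$ recorded in \ref{nume:def-sbullet}, is exactly the intended ``straightforward'' bookkeeping.
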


Let us assume again that $\varphi: A[[\bfs]]_\nabla \to A[[\bft]]_\Delta$ is an $A$-linear continuous map satisfying the assumptions in \ref{nume:cont-maps-power-series-rings}. 
We define the $(A;A)$-linear map
$$ \varphi_*: f\in \Hom_k(A,A[[\bfs]]_\nabla) \longmapsto \varphi_*(f)= \varphi \pcirc f \in \Hom_k(A,A[[\bft]]_\Delta)
$$
which induces another one $\overline{\varphi_*}: \End_{k[[\bfs]]_\nabla}^{\text{\rm top}}(A[[\bfs]]_\nabla) \longrightarrow  \End_{k[[\bft]]_\Delta}^{\text{\rm top}}(A[[\bft]]_\Delta)$ defined by
$$  \overline{\varphi_*}(f):=
\left(\varphi_*\left(f|_A\right)\right)^e =
\left(\varphi \pcirc f|_A\right)^e,\quad f\in \End_{k[[\bfs]]_\nabla}^{\text{\rm top}}(A[[\bfs]]_\nabla) .
$$
More generally, for a given left $A$-module $E$ (which will be considered as a trivial $(A;A)$-bimodule) we have $(A;A)$-linear maps
\begin{eqnarray*}
&\displaystyle
 (\varphi_{E})_*: f\in \Hom_k(E,E[[\bfs]]_\nabla) \mapsto (\varphi_{E})_*(f)= \varphi_E \pcirc f \in \Hom_k(E,E[[\bft]]_\Delta),
&\\
&\displaystyle
\overline{(\varphi_E)_*}: \End_{k[[\bfs]]_\nabla}^{\text{\rm top}}(E[[\bfs]]_\nabla) \rightarrow  \End_{k[[\bft]]_\Delta}^{\text{\rm top}}(E[[\bft]]_\Delta),\quad \overline{(\varphi_E)_*}(f):=\left(\varphi_E \pcirc f|_A\right)^e.
\end{eqnarray*}
Let us denote $R=\End_k(E)$. For each $r\in R[[\bfs]]_\nabla$ and for each $e\in E$ we have
$$ \widetilde{\varphi_R(r)}(e) = \varphi_E\left(\widetilde{r}(e) \right),
$$
or more graphically, the following diagram is commutative (see (\ref{eq:comple_formal_1})):
\begin{equation*} 
\begin{CD}
R[[\bfs]]_\nabla @>{\sim}>{r\mapsto \widetilde{r}}> \End_{k[[\bfs]]_\nabla}^{\text{\rm top}}(E[[\bfs]]_\nabla) @>{\sim}>{\text{rest.}}>
\Hom_k(E,E[[\bfs]]_\nabla) \\
 @V{\varphi_R}VV @VV{\overline{(\varphi_E)_*}}V @V{(\varphi_{E})_*}VV\\
R[[\bft]]_\Delta @>{\sim}>{r\mapsto \widetilde{r}}> \End_{k[[\bft]]_\Delta}^{\text{\rm top}}(E[[\bft]]_\Delta) @>{\sim}>{\text{rest.}}>
\Hom_k(E,E[[\bft]]_\Delta).
\end{CD}
\end{equation*}
In order to simplify notations, we will also write
$$ \varphi \sbullet f := \overline{(\varphi_E)_*}(f)\quad \forall f \in \End_{k[[\bfs]]_\nabla}^{\text{\rm top}}(E[[\bfs]]_\nabla)
$$
and so have
$ \widetilde{\varphi \sbullet r} = \varphi \sbullet \widetilde{r}$ for all $r\in R[[\bfs]]_\nabla$. 
Let us notice that $(\varphi \sbullet f)(e) = (\varphi_E \pcirc f)(e)$ for all $e\in E$, i.e.  
\begin{equation} \label{eq:atencion-bullet}
\framebox{$(\varphi \sbullet f)|_E = (\varphi_E \pcirc f)|_E$, but in general $\varphi \sbullet f \neq  \varphi_E \pcirc f$.}
\end{equation}
\medskip

If $\varphi$ is the trivial substitution map, then $(\varphi_E)_*$ (resp. $\overline{(\varphi_E)_*}$) is also trivial in the sense that if $f=\sum_\alpha f_\alpha \bfs^\alpha \in \Hom_k(E,E[[\bfs]]_\nabla)$ (resp. $f=\sum_\alpha f_\alpha \bfs^\alpha \in\End_k(E)[[\bfs]]_\nabla \equiv \End^{\text{\rm top}}_{k[[\bfs]]_\nabla}(E[[\bfs]]_\nabla)$), then $(\varphi_E)_*(f)= f_0 \in \End_k(E) \subset \Hom_k(E,E[[\bfs]]_\nabla)$ (resp. $\overline{(\varphi_E)_*}(f)= f_0^e \in \End^{\text{\rm top}}_{k[[\bfs]]_\nabla}(E[[\bfs]]_\nabla)$, with $f_0^e(\sum_\alpha e_\alpha \bfs^\alpha) = \sum_\alpha f_0(e_\alpha) \bfs^\alpha$).
\bigskip

If $\varphi: A[[\bfs]]_\nabla \to A[[\bft]]_\nabla$ is a substitution map, we have
\begin{eqnarray*}
&\displaystyle
(\varphi_{E})_*(af) = \varphi(a) (\varphi_{E})_*(f) \quad \forall a\in A[[\bfs]]_\nabla, \forall f\in \Hom_k(E,E[[\bfs]]_\nabla)
\end{eqnarray*}
and so
\begin{eqnarray*}
&\displaystyle
\overline{(\varphi_{E})_*}(af) = \varphi(a) \overline{(\varphi_{E})_*}(f) \quad \forall a\in A[[\bfs]]_\nabla, \forall f\in \End_{k[[\bfs]]_\nabla}^{\text{\rm top}}(E[[\bfs]]_\nabla).
\end{eqnarray*}
Moreover, the following inclusions hold
\begin{eqnarray*}
&(\varphi_E)_*(\Hom_k^\pcirc(E,M[[\bfs]]_\nabla)) \subset \Hom_k^\pcirc(E,E[[\bft]]_\Delta),
&\\
&\overline{(\varphi_E)_*} \left( \Aut_{k[[\bfs]]_\nabla}^\pcirc(E[[\bfs]]_\nabla) \right) \subset
\Aut_{k[[\bft]]_\Delta}^\pcirc(E[[\bft]]_\Delta),
\end{eqnarray*} 
and so we have a commutative diagram:
\begin{equation}   \label{eq:diag-funda}
\begin{CD}
 \U^\bfs(R;\nabla) @>{\sim}>{r\mapsto \widetilde{r}}> \Aut_{k[[\bfs]]_\nabla}^\pcirc(E[[\bfs]]_\nabla) @>{\sim}>{\text{rest.}}>
\Hom_k^\pcirc(E,E[[\bfs]]_\nabla) \\
 @V{\varphi_R}VV @VV{\overline{(\varphi_E)_*}}V @V{(\varphi_{E})_*}VV\\
 \U^\bft(R;\Delta) @>{\sim}>{r\mapsto \widetilde{r}}> \Aut_{k[[\bft]]_\Delta}^\pcirc(E[[\bft]]_\Delta) @>{\sim}>{\text{rest.}}>
\Hom_k^\pcirc(E,E[[\bft]]_\Delta).
\end{CD}
\end{equation}

\begin{lemma} \label{lemma:const_coeff_subst_map}
With the notations above, 
if $\varphi: k[[\bfs]]_\nabla \to k[[\bft]]_\Delta$ is a substitution map with constant coefficients, then
$$
\langle   \varphi \sbullet r,\varphi_E(e)\rangle =  \varphi_E(\langle r,e\rangle),\quad \forall r \in R[[\bfs]]_\nabla, \forall e\in E[[\bfs]]_\nabla.
$$
\end{lemma}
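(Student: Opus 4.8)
The plan is to unwind both sides of the desired identity by using the description of $\widetilde{(-)}$ and of $\varphi_E$, and to reduce everything to the already-established multiplicativity property of $\varphi_R$ in the constant-coefficient case. Recall from \ref{nume:def-sbullet} that $\varphi$ having constant coefficients means $\varphi_R = \sideset{_R}{}\opvarphi$ is a ring homomorphism over $\varphi$, so in particular $\varphi_R$ is multiplicative; the same holds for $\varphi_E$ viewed as a left $A[[\bfs]]_\nabla$-module map that is $\varphi$-semilinear, and for the compatibility $\varphi_R \sbullet r$ with the $\widetilde{(-)}$ isomorphism, namely $\widetilde{\varphi \sbullet r} = \varphi \sbullet \widetilde{r}$, established just before the statement.

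First I would recall that $\langle r, e\rangle := \widetilde{r}(e)$, so the left-hand side is $\widetilde{\varphi \sbullet r}\bigl(\varphi_E(e)\bigr)$, and by the boxed/commutative-diagram relation $\widetilde{\varphi \sbullet r} = \varphi \sbullet \widetilde{r} = \overline{(\varphi_E)_*}(\widetilde{r})$. So the identity to prove becomes
$$ \overline{(\varphi_E)_*}(\widetilde{r})\bigl(\varphi_E(e)\bigr) = \varphi_E\bigl(\widetilde{r}(e)\bigr), \quad \forall e \in E[[\bfs]]_\nabla. $$
Second, I would expand both sides coefficientwise. Writing $r = \sum_\beta r_\beta \bfs^\beta$ with $r_\beta \in \End_k(E)$ and $e = \sum_\gamma e_\gamma \bfs^\gamma$ with $e_\gamma \in E$, the right-hand side is $\varphi_E$ applied to $\sum_\alpha \bigl(\sum_{\beta+\gamma=\alpha} r_\beta(e_\gamma)\bigr)\bfs^\alpha$, which by $\varphi$-semilinearity and continuity of $\varphi_E$ becomes $\sum_\alpha \bigl(\sum_{\beta+\gamma=\alpha}\varphi_E(r_\beta(e_\gamma)\,\bfs^\alpha)\bigr)$, i.e.\ $\sum_\alpha \sum_{\beta+\gamma=\alpha} r_\beta(e_\gamma)\,\varphi(\bfs^\alpha)$ since $r_\beta(e_\gamma)\in E$ is a ``constant'' and the coefficients of $\varphi$ lie in $k$. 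For the left-hand side, one uses the explicit formula for $\overline{(\varphi_E)_*}$ (it sends a power-series endomorphism to the extension $(\varphi_E\pcirc f|_E)^e$), evaluates on $\varphi_E(e) = \sum_\gamma e_\gamma\,\varphi(\bfs^\gamma)$, and again uses that $\varphi_E$ is $\varphi$-semilinear with constant ($k$-valued) transition coefficients ${\bf C}_e(\varphi,\alpha)$. Both sides then collapse to the same double sum $\sum_{\beta,\gamma} r_\beta(e_\gamma)\,\varphi(\bfs^\beta)\varphi(\bfs^\gamma) = \sum_{\beta,\gamma} r_\beta(e_\gamma)\,\varphi(\bfs^{\beta+\gamma})$, the last equality because $\varphi$ is an $A$-algebra homomorphism.

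The main obstacle is bookkeeping rather than anything conceptual: one must be careful that $\varphi_E$ is only $\varphi$-semilinear (not $A[[\bfs]]_\nabla$-linear) and that moving scalars of the form $r_\beta(e_\gamma)\in E$ past the $\varphi$-images of monomials is legitimate precisely because $\varphi$ has constant coefficients, so that $\varphi_E$ restricted to the ``coefficient'' copy of $E$ is just the identity and all the combinatorics sit in $k$. In other words, the whole point of the constant-coefficient hypothesis is that $\varphi_E = \sideset{_E}{}\opvarphi$ and the two sides of the pairing ``see'' the same scalar-extension map; once that is invoked, convergence of the series is automatic (all sums are locally finite by the co-ideal hypotheses and the finiteness condition \eqref{eq:cond-fini-c}), and the identity follows by matching coefficients of $\bft^e$ for each $e\in\Delta$. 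I would present it by first reducing to the boxed relation $\widetilde{\varphi\sbullet r}=\varphi\sbullet\widetilde r$, then doing the coefficient expansion in one short display.
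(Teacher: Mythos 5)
Your proposal is correct and follows essentially the same route as the paper: expand $r$ and $e$ coefficientwise, write $\widetilde{\varphi\sbullet r}=\sum_\alpha\varphi(\bfs^\alpha)\widetilde{r_\alpha}$ and $\varphi_E(e)=\sum_\beta\varphi(\bfs^\beta)e_\beta$, move the $k[[\bft]]_\Delta$-valued scalars $\varphi(\bfs^\beta)$ past the $k[[\bft]]_\Delta$-linear maps $\widetilde{r_\alpha}$ (this is exactly where constant coefficients are used), and then invoke $\varphi(\bfs^\alpha)\varphi(\bfs^\beta)=\varphi(\bfs^{\alpha+\beta})$ to regroup into $\varphi_E(\widetilde{r}(e))$. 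This matches the paper's computation step for step, so no further comment is needed.
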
 
\begin{proof}
Let us write $r=\sum_\alpha r_\alpha \bfs^\alpha$, $r_\alpha \in R = \End_k(E)$ and $e=\sum_\alpha e_\alpha \bfs^\alpha$, $e_\alpha \in E$. We have
\begin{eqnarray*}
& \displaystyle
\langle   \varphi \sbullet r,\varphi_E(e)\rangle = 
(\widetilde{\varphi \sbullet r})(\varphi_E(e)) =
\left( \sum_\alpha \varphi(\bfs^\alpha) \widetilde{r_\alpha} \right)
\left( \sum_\alpha \varphi(\bfs^\alpha) e_\alpha \right) =&
\\
& \displaystyle 
\sum_{\alpha,\beta}\varphi(\bfs^\alpha) \widetilde{r_\alpha} \left(\varphi(\bfs^\beta) e_\beta\right)=
\sum_{\alpha,\beta}\varphi(\bfs^\alpha)\varphi(\bfs^\beta) \widetilde{r_\alpha} \left(e_\beta\right)=
\sum_{\alpha,\beta}\varphi(\bfs^{\alpha+\beta}) \widetilde{r_\alpha} (e_\beta)= &
\\
& \displaystyle
\sum_{\gamma} \varphi(\bfs^\gamma) \left(\sum_{\alpha+\beta=\gamma} \widetilde{r_\alpha} (e_\beta)\right) =
\varphi_E \left( \sum_{\gamma}  \left(\sum_{\alpha+\beta=\gamma} \widetilde{r_\alpha} (e_\beta)\right)
\bfs^\gamma \right) 
&\\
& \displaystyle 
= \varphi_E\left(\widetilde{r}(e)\right) =\varphi_E(\langle r,e\rangle).
\end{eqnarray*}
\end{proof}

Notice that if $\varphi: k[[\bfs]]_\nabla \to k[[\bft]]_\Delta$ is a substitution map with constant coefficients, we already pointed out that $\sideset{_R}{}\opvarphi = \varphi_R$, and indeed, $\varphi \sbullet r = r \sbullet \varphi$ for all $r\in R[[\bfs]]_\nabla$.
\bigskip

\numero \label{nume:properties-external-x} 
Let us denote $\iota: A[[\bfs]]_\nabla \to A[[\bfs \sqcup \bft]]_{\nabla \times \Delta}$, $\kappa: A[[\bft]]_\Delta \to A[[\bfs \sqcup \bft]]_{\nabla \times \Delta}$ the combinatorial substitution maps given by the inclusions $\bfs \hookrightarrow \bfs \sqcup \bft$, $\bft \hookrightarrow \bfs \sqcup \bft$.

Let us notice that for $r\in R[[\bfs]]_\nabla$ and $r'\in R[[\bft]]_\Delta$, we have (see Definition \ref{defi:external-x})
$ r\btimes r' = (\iota\sbullet r) (\kappa\sbullet r') \in R[[\bfs \sqcup \bft]]_{\nabla \times \Delta}$. 

If $\nabla'\subset\nabla\subset\N^{(\bfs)}$, $\Delta'\subset\Delta\subset \N^{(\bft)}$ are non-empty co-ideals, we have 
$$\tau_{\nabla \times \Delta,\nabla' \times \Delta'}(r\btimes r') = \tau_{\nabla,\nabla'}(r) \btimes \tau_{\Delta,\Delta'}(r').
$$
If we denote by $\Sigma: R[[\bfs \sqcup \bfs]]_{\nabla\times\nabla}  \rightarrow R[[\bfs]]_\nabla$ the combinatorial substitution map given by the co-diagonal map $\bfs \sqcup \bfs \to \bfs$, it is clear that for each $r,r'\in R[[\bfs]]_\nabla$ we have
\begin{equation} \label{eq:prod-in-terms-x}
r r' = \Sigma\sbullet (r \btimes r').
\end{equation}
If $\varphi:A[[\bfs]]_\nabla \to A[[\bfu]]_\Omega$ and $\psi:A[[\bft]]_\Delta \to A[[\bfv]]_{\Omega'}$ are substitution maps, we have new substitution maps 
$\varphi \otimes \Id:  A[[\bfs \sqcup \bft]]_{\nabla\times\Delta} \to A[[\bfu \sqcup \bft]]_{\Omega\times\Delta}$ and $\Id \otimes \psi: A[[\bfs \sqcup \bft]]_{\nabla\times\Delta} \to A[[\bfs \sqcup \bfv]]_{\nabla\times\Omega'}$ (see Definition \ref{defi:tensor-prod-of-substi}) taking part in the following commutative diagrams of $(A;A)$-bimodules
$$
\begin{CD}
R[[\bfs]]_\nabla \otimes_R R[[\bft]]_\Delta @>{\varphi_R \otimes\Id }>> R[[\bfu]]_\Omega \otimes_R R[[\bft]]_\Delta\\
@V{\text{can.}}VV @VV{\text{can.}}V\\
R[[\bfs \sqcup \bft]]_{\nabla\times\Delta}  @>{\left(\varphi\otimes\Id\right)_R}>> R[[\bfu \sqcup \bft]]_{\Omega\times\Delta}
\end{CD}
$$
and
$$
\begin{CD}
R[[\bfs]]_\nabla \otimes_R R[[\bft]]_\Delta @>{\Id\otimes\psi }>> R[[\bfs]]_\nabla \otimes_R R[[\bfv]]_{\Omega'}\\
@V{\text{can.}}VV @VV{\text{can.}}V\\
R[[\bfs \sqcup \bft]]_{\nabla\times\Delta}  @>{\left(\Id\otimes\varphi\right)_R}>> R[[\bfs \sqcup \bfv]]_{\nabla\times\Omega'}.
\end{CD}
$$
So $(\varphi\sbullet r) \btimes r'=
(\varphi\otimes \Id)\sbullet (r\btimes r')$ and $r\btimes (r'\sbullet \psi)= (r\btimes r')\sbullet (\Id \otimes \psi)$.


\section{Multivariate Hasse-Schmidt derivations}
\label{section:HS}

In this section we study multivariate (possibly $\infty$-variate) Hasse--Schmidt 
deri\-vations. The original reference for 1-variate Hasse--Schmidt derivations is \cite{has37}. This notion has been studied and developed in \cite[\S 27]{mat_86} (see also \cite{vojta_HS} and \cite{nar_2012}). In \cite{hoff_kow_2016} the authors study ``finite dimensional'' Hasse--Schmidt derivations, which correspond in our terminology to $p$-variate Hasse--Schmidt derivations.
\medskip

From now on $k$ will be a commutative ring, $A$ a commutative $k$-algebra, $\bfs$ a set and $\Delta \subset \N^{(\bfs)}$ a non-empty co-ideal.

\begin{definition} 
A {\em $(\bfs,\Delta)$-variate Hasse-Schmidt derivation}\index{$(\bfs,\Delta)$-variate Hasse-Schmidt derivation}, or 
a {\em $(\bfs,\Delta)$-variate HS-derivation} for short\index{$(\bfs,\Delta)$-variate HS-derivation}, of $A$ over $k$  
 is a family $D=(D_\alpha)_{\alpha\in \Delta}$ 
 of $k$-linear maps $D_\alpha:A
\longrightarrow A$, satisfying the following Leibniz type identities: $$ D_0=\Id_A, \quad
D_\alpha(xy)=\sum_{\beta+\gamma=\alpha}D_\beta(x)D_\gamma(y) $$ for all $x,y \in A$ and for all
$\alpha\in \Delta$. We denote by
$\HS^{\bfs}_k(A;\Delta)$ the set of all $(\bfs,\Delta)$-variate HS-derivations of $A$ over
$k$ and $\HS^{\bfs}_k(A)=$ for $\Delta = \N^{(\bfs)}$. In the case where $\bfs = \{1,\dots,p\}$, a $(\bfs,\Delta)$-variate HS-derivation will be simply called a {\em $(p,\Delta)$-variate HS-derivation}  \index{$(p,\Delta)$-variate HS-derivation} and we denote $\HS^p_k(A;\Delta):= \HS^{\bfs}_k(A;\Delta)$ and $\HS^p_k(A) := \HS^{\bfs}_k(A)$. For $p=1$, a $1$-variate HS-derivation will be simply called a {\em Hasse--Schmidt derivation} \index{Hasse--Schmidt derivation}  (a HS-derivation for short\index{HS-derivation}), or a {\em higher derivation}\footnote{This terminology is used for instance in \cite{mat_86}.\index{higher derivation}}, and we will simply write $\HS_k(A;m):= \HS^1_k(A;\Delta)$ for $\Delta=\{q\in\N\ |\ q\leq m\}$\footnote{These HS-derivations are called of length $m$ in \cite{nar_2012}.} and $\HS_k(A) := \HS^1_k(A)$.
\end{definition}

\noindent \numero \label{nume:leibniz-HS} The above Leibniz identities for $D\in \HS^{\bfs}_k(A;\Delta)$ can be written as
\begin{equation} \label{eq:leibniz-HS}
D_\alpha x=\sum_{\beta+\gamma=\alpha}D_\beta(x)D_\gamma,\quad \forall x\in A, \forall \alpha\in \Delta.
\end{equation}

Any $(\bfs,\Delta)$-variate HS-derivation $D$ of $A$ over $k$  can be understood as a power series 
$$ \sum_{\scriptscriptstyle \alpha\in\Delta} D_\alpha \bfs^\alpha \in \End_k(A)[[\bfs]]_\Delta$$
and so we consider $\HS^\bfs_k(A;\Delta) \subset \End_k(A)[[\bfs]]_\Delta$.

\begin{proposition} Let $D\in \HS^{\bfs}_k(A;\Delta)$ be a HS-derivation. Then, for each $\alpha\in\Delta$, the component $D_\alpha:A \to A$ is a $k$-linear differential operator or order $\leq |\alpha|$ vanishing on $k$. In particular, if $|\alpha|=1$ then $D_\alpha:A \to A$ is a $k$-derivation.
\end{proposition}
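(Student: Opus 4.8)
The plan is to induct on the norm $|\alpha|$, treating separately the two assertions: that $D_\alpha$ annihilates the image of $k$, and that $D_\alpha$ lies in the order-$\le|\alpha|$ part of $\DD_{A/k}$. Throughout I use only that each $D_\alpha$ is $k$-linear and that the set $\{\beta\in\N^{(\bfs)}\ |\ \beta\le\alpha\}$ is finite.

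\textbf{Vanishing on $k$.} First I would check that $D_\alpha(1_A)=0$ for every $\alpha\in\Delta$ with $\alpha\neq 0$. Applying the Leibniz identity to $x=y=1_A$ and using $D_0=\Id_A$, the two summands with $\beta=0$ or $\gamma=0$ each equal $D_\alpha(1_A)$, whence
$$D_\alpha(1_A)=-\sum_{\substack{\beta+\gamma=\alpha\\ \beta\neq 0,\ \gamma\neq 0}}D_\beta(1_A)\,D_\gamma(1_A).$$
For $|\alpha|=1$ the sum is empty; for $|\alpha|>1$ every index occurring has norm strictly between $0$ and $|\alpha|$, so the right-hand side vanishes by the inductive hypothesis. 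Since $D_\alpha$ is $k$-linear, $D_\alpha(c\cdot 1_A)=c\,D_\alpha(1_A)=0$ for all $c\in k$, i.e. $D_\alpha$ vanishes on $k$ in the sense that $D_\alpha|_{k\cdot 1_A}=0$ (for $\alpha=0$ one has instead $D_0=\Id_A$).

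\textbf{Order estimate.} The key point is that the Leibniz identity \eqref{eq:leibniz-HS} is a commutator formula: viewing $D_\alpha$ and multiplication by $x\in A$ as elements of $\End_k(A)$, the $\beta=0$ term of $\sum_{\beta+\gamma=\alpha}D_\beta(x)D_\gamma$ is exactly $x\,D_\alpha$, so it cancels and one obtains the \emph{finite} identity
$$[D_\alpha,x]=\sum_{\substack{\beta+\gamma=\alpha\\ \beta\neq 0}}D_\beta(x)\cdot D_\gamma,$$
in which every $\gamma$ satisfies $\gamma<\alpha$, hence $|\gamma|\le|\alpha|-1$. Now I induct on $|\alpha|$: the case $|\alpha|=0$ is $D_0=\Id_A$, a differential operator of order $0$. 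For $|\alpha|\ge 1$, each $D_\gamma$ in the sum is, by the inductive hypothesis, a $k$-linear differential operator of order $\le|\alpha|-1$; since left multiplication by an element of $A$ preserves the order filtration and a finite sum of operators of order $\le|\alpha|-1$ again has order $\le|\alpha|-1$, we get that $[D_\alpha,x]$ has order $\le|\alpha|-1$ for every $x\in A$. By the recursive definition of the order filtration of $\DD_{A/k}$, this says precisely that $D_\alpha$ is a differential operator of order $\le|\alpha|$.

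\textbf{The case $|\alpha|=1$ and the expected difficulty.} When $|\alpha|=1$, i.e. $\alpha=\bfs^t$, the operator $D_\alpha$ has order $\le 1$ and satisfies $D_\alpha(1_A)=0$; unwinding $[[D_\alpha,x],y]=0$ yields the Leibniz rule $D_\alpha(xy)=x D_\alpha(y)+y D_\alpha(x)$, so $D_\alpha\in\Der_k(A)$, which is the last claim. I do not anticipate a genuine obstacle here: the only step demanding care is the bookkeeping that converts the quadratic Leibniz identity into the linear commutator identity displayed above (finiteness of the index set, cancellation of the $\beta=0$ term), together with the elementary stability properties of the order filtration of $\DD_{A/k}$, all of which are standard.
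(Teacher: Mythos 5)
Your proof is correct and follows exactly the route the paper intends: the paper's proof is the single line ``induction on $|\alpha|$ from the operator form $D_\alpha x=\sum_{\beta+\gamma=\alpha}D_\beta(x)D_\gamma$ of the Leibniz identity,'' and your argument is a faithful, complete expansion of that induction (cancelling the $\beta=0$ term to get the commutator identity, then invoking the recursive definition of the order filtration). The treatment of $D_\alpha(1_A)=0$ and the $|\alpha|=1$ case are likewise the standard details the paper leaves implicit.
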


\begin{proof} The proof follows by induction on $|\alpha|$ from \ref{eq:leibniz-HS}.
\end{proof}

The map
\begin{equation} \label{eq:HS1-Der}
D\in \HS^\bfs_k(A;\ft_1(\bfs)) \mapsto \{D_\alpha\}_{|\alpha|=1} \in \Der_k(A)^\bfs \end{equation} is clearly a bijection.
\medskip

The proof of the following proposition is straightforward and it is left to the reader (see Notation \ref{notacion:Ump} and \ref{nume:tilde}).

\begin{proposition} \label{prop:caracter_HS}
Let us denote $R=\End_k(A)$ and
let $D= \sum_\alpha D_\alpha \bfs^\alpha \in R[[\bfs]]_\Delta$ be a power series. The following properties are equivalent:
\begin{enumerate}
\item[(a)] $D$ is a $(\bfs,\Delta)$-variate HS-derivation of $A$ over $k$.
\item[(b)] The map $\widetilde{D}: A[[\bfs]]_\Delta \to A[[\bfs]]_\Delta$ is a (continuous) $k[[\bfs]]_\Delta$-algebra homomorphism compatible with the natural augmentation $A[[\bfs]]_\Delta \to A$.
\item[(c)] $D\in\U^\bfs(R;\Delta)$ and for all $a\in A[[\bfs]]_\Delta$ we have $D a  = \widetilde{D}(a) D$.
\item[(d)] $D\in\U^\bfs(R;\Delta)$ and for all  $a\in A$ we have $D a  = \widetilde{D}(a) D$.
\end{enumerate}
Moreover, in such a case $\widetilde{D}$ is a bi-continuous $k[[\bfs]]_\Delta$-algebra automorphism of $A[[\bfs]]_\Delta$.
\end{proposition}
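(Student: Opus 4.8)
The statement to prove is Proposition \ref{prop:caracter_HS}, characterizing when a power series $D = \sum_\alpha D_\alpha \bfs^\alpha \in R[[\bfs]]_\Delta$ with $R = \End_k(A)$ is an $(\bfs,\Delta)$-variate HS-derivation. The plan is to prove the cycle of implications (a) $\Rightarrow$ (b) $\Rightarrow$ (c) $\Rightarrow$ (d) $\Rightarrow$ (a), and then derive the final sentence from the equivalences together with results already established in Section 1.

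First, for (a) $\Rightarrow$ (b): assume $D$ satisfies the Leibniz identities. Recall from \ref{nume:tilde} that $\widetilde{D}$ is automatically a continuous $k[[\bfs]]_\Delta$-linear map. The condition $D_0 = \Id_A$ gives, via the preceding lemma characterizing when $\widetilde{r}$ is augmentation-compatible, that $\widetilde{D}$ is compatible with the augmentation $A[[\bfs]]_\Delta \to A$. To check $\widetilde{D}$ is multiplicative, it suffices by continuity and $k[[\bfs]]_\Delta$-linearity to verify $\widetilde{D}(xy) = \widetilde{D}(x)\widetilde{D}(y)$ for $x,y \in A$; expanding $\widetilde{D}(x) = \sum_\beta D_\beta(x)\bfs^\beta$ and multiplying out, the coefficient of $\bfs^\alpha$ in $\widetilde{D}(x)\widetilde{D}(y)$ is exactly $\sum_{\beta+\gamma=\alpha} D_\beta(x)D_\gamma(y)$, which equals $D_\alpha(xy)$ by hypothesis. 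For (b) $\Rightarrow$ (c): since $\widetilde{D}$ is an augmentation-compatible ring homomorphism, its $0$-degree part is $\Id_A$, so $D_0 = \Id_A$ and hence $D \in \U^\bfs(R;\Delta)$ by Lemma \ref{lemma:unit-psr}; the relation $Da = \widetilde{D}(a)D$ in $R[[\bfs]]_\Delta$ for $a \in A[[\bfs]]_\Delta$ is just the statement (\ref{eq:product-HomEE[[s]]}) unwound, saying that left multiplication by the element $a$ inside the $R$-module structure, composed with $D$, equals $D$ composed with multiplication by $\widetilde{D}(a)$ — concretely, comparing coefficients, $\sum_{\beta+\gamma=\alpha}(D_\beta \pcirc a_\gamma) = \sum_{\beta+\gamma=\alpha}(\widetilde{D}(a)_\beta \pcirc D_\gamma)$, and $\widetilde{D}(a)_\beta = \sum_{\mu+\nu=\beta}D_\mu(a_\nu)$, so both sides match when $\widetilde D$ is multiplicative. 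The implication (c) $\Rightarrow$ (d) is trivial (restrict from $A[[\bfs]]_\Delta$ to the constants $A$).

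The main work, and the step I expect to be the real obstacle, is (d) $\Rightarrow$ (a). Here one must recover the full Leibniz identities from the single relation $Da = \widetilde{D}(a)D$ for $a \in A$ together with $D \in \U^\bfs(R;\Delta)$. Comparing $\bfs^\alpha$-coefficients in $Da = \widetilde D(a) D$ gives
$$ \sum_{\beta+\gamma=\alpha} D_\beta \pcirc (a\cdot) = \sum_{\beta+\gamma=\alpha}\Big(\textstyle\sum_{\mu+\nu=\beta} D_\mu(a)\cdot\Big)\pcirc D_\gamma $$
as endomorphisms of $A$; evaluating both sides at $1 \in A$ and using $D_\gamma(1) = \delta_{\gamma 0}$ (which follows from $D_0 = \Id$ and the fact, proved along the way, that each $D_\gamma$ kills $1$ for $\gamma \neq 0$ — this itself needs a small induction on $|\gamma|$ using the relation at $a = 1$), the left side becomes $D_\alpha(a)$ and the right side becomes $\sum_{\beta+\gamma=\alpha}\sum_{\mu+\nu=\beta}D_\mu(a)D_\nu(1)\cdots$; one then evaluates at a general $y \in A$ instead of $1$ to extract $D_\alpha(ay) = \sum_{\beta+\gamma=\alpha}D_\beta(a)D_\gamma(y)$, which is precisely the Leibniz identity. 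The bookkeeping with the double sums is the fiddly part, but it is purely formal.

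Finally, the last sentence: once (b) holds, $\widetilde{D}$ is a continuous $k[[\bfs]]_\Delta$-algebra endomorphism with $\widetilde D = \widetilde{r}$ for $r = D$ having $r_0 = \Id$; by the lemma immediately preceding this proposition (the one asserting that $\widetilde r$ is a bi-continuous automorphism when $r_0 = \Id$), $\widetilde{D}$ is a bi-continuous $k[[\bfs]]_\Delta$-linear automorphism, and since it is also a ring homomorphism it is an algebra automorphism, with inverse $\widetilde{D^{-1}}$ (also a ring homomorphism by symmetry of the argument). This completes the proof; as the proposition's statement indicates, the routine verifications are left to the reader, and I would only spell out the (d) $\Rightarrow$ (a) direction in any detail.
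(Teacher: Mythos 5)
The paper gives no proof of this proposition (it is explicitly left to the reader, with a pointer to Notation \ref{notacion:Ump} and \ref{nume:tilde}), so there is no official argument to compare against; your cycle (a)$\Rightarrow$(b)$\Rightarrow$(c)$\Rightarrow$(d)$\Rightarrow$(a) is the natural route and it does work. Two remarks. First, in (a)$\Rightarrow$(b) you should also record that $\widetilde{D}(1)=1$, which requires the small induction showing $D_\alpha(1)=0$ for $\alpha\neq 0$ (take $x=y=1$ in the Leibniz identity). Second, your displayed coefficient identity in (d)$\Rightarrow$(a) is miswritten: for a \emph{constant} series $a\in A$ one has $a_\gamma=a\delta_{\gamma 0}$, hence $(Da)_\alpha=D_\alpha\pcirc(a\,\cdot)$ is a single term, and $\widetilde{D}(a)_\beta=D_\beta(a)$; the sums $\sum_{\beta+\gamma=\alpha}D_\beta\pcirc(a\,\cdot)$ and $\sum_{\mu+\nu=\beta}D_\mu(a)$ that you wrote carry spurious multiplicities. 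Once corrected, the coefficient identity reads
$$ D_\alpha\pcirc(a\,\cdot)\;=\;\sum_{\beta+\gamma=\alpha}\bigl(D_\beta(a)\,\cdot\bigr)\pcirc D_\gamma , $$
which evaluated at an arbitrary $y\in A$ is verbatim the Leibniz rule $D_\alpha(ay)=\sum_{\beta+\gamma=\alpha}D_\beta(a)D_\gamma(y)$ (this is exactly the reformulation (\ref{eq:leibniz-HS}) recorded in \ref{nume:leibniz-HS}); no detour through $a=1$ and no induction on $|\gamma|$ is needed, so (d)$\Rightarrow$(a) is in fact the easiest implication rather than ``the real obstacle''. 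Everything else, including the final bicontinuity claim via the unnumbered lemma characterizing $r_0=\Id$, is fine.
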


\begin{corollary} Under the above hypotheses, $\HS^\bfs_k(A;\Delta)$ is a (multiplicative) sub-group of $\U^\bfs(R;\Delta)$.
\end{corollary}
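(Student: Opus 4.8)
The statement to prove is that $\HS^\bfs_k(A;\Delta)$ is a multiplicative subgroup of $\U^\bfs(R;\Delta)$, where $R=\End_k(A)$. The plan is to leverage the characterization in Proposition~\ref{prop:caracter_HS}, especially the translation (a) $\Leftrightarrow$ (b) between HS-derivations and continuous $k[[\bfs]]_\Delta$-algebra endomorphisms of $A[[\bfs]]_\Delta$ compatible with the augmentation. Under the algebra isomorphism $R[[\bfs]]_\Delta \xrightarrow{\sim} \End_{k[[\bfs]]_\Delta}^{\text{top}}(A[[\bfs]]_\Delta)$, $r\mapsto \widetilde{r}$ (Lemma~\ref{lema:tilde-map}(1)), the group $\U^\bfs(R;\Delta)$ corresponds precisely to $\Aut_{k[[\bfs]]_\Delta}^\pcirc(A[[\bfs]]_\Delta)$ via the isomorphism (\ref{eq:U-iso-pcirc}). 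So it suffices to show that the subset of $\Aut_{k[[\bfs]]_\Delta}^\pcirc(A[[\bfs]]_\Delta)$ consisting of those automorphisms that are, in addition, $k[[\bfs]]_\Delta$-\emph{algebra} homomorphisms, is a subgroup — and that this subset is exactly the image of $\HS^\bfs_k(A;\Delta)$.

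\textbf{Step 1.} First I would recall from Proposition~\ref{prop:caracter_HS} that $\HS^\bfs_k(A;\Delta)\subset\U^\bfs(R;\Delta)$, so the candidate subset is genuinely contained in the ambient group; this is immediate from (a) $\Rightarrow$ (c), which asserts $D\in\U^\bfs(R;\Delta)$.

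\textbf{Step 2.} The identity element. The unit of $\U^\bfs(R;\Delta)$ is the power series $1$, i.e.\ $D_0=\Id_A$ and $D_\alpha=0$ for $\alpha\neq 0$; this trivially satisfies the Leibniz identities, so $1\in\HS^\bfs_k(A;\Delta)$. Equivalently, $\widetilde{1}=\Id_{A[[\bfs]]_\Delta}$ is a $k[[\bfs]]_\Delta$-algebra automorphism compatible with the augmentation.

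\textbf{Step 3.} Closure under products. Let $D,E\in\HS^\bfs_k(A;\Delta)$, and consider the product $DE$ taken in $R[[\bfs]]_\Delta$. By Lemma~\ref{lema:tilde-map}(1), $\widetilde{DE}=\widetilde{D}\pcirc\widetilde{E}$. Since $\widetilde{D}$ and $\widetilde{E}$ are each continuous $k[[\bfs]]_\Delta$-algebra homomorphisms of $A[[\bfs]]_\Delta$ compatible with the augmentation (by (a) $\Rightarrow$ (b)), their composite $\widetilde{D}\pcirc\widetilde{E}=\widetilde{DE}$ is again such — composition of algebra homomorphisms is an algebra homomorphism, composition of continuous maps is continuous, and compatibility with the augmentation is preserved under composition. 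By (b) $\Rightarrow$ (a), we conclude $DE\in\HS^\bfs_k(A;\Delta)$.

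\textbf{Step 4.} Closure under inverses. Let $D\in\HS^\bfs_k(A;\Delta)$. Since $D_0=\Id_A$ is a unit of $R$, Lemma~\ref{lemma:unit-psr} gives an inverse $D^{-1}\in\U^\bfs(R;\Delta)$. By the last assertion of Proposition~\ref{prop:caracter_HS}, $\widetilde{D}$ is a \emph{bi-continuous} $k[[\bfs]]_\Delta$-algebra automorphism of $A[[\bfs]]_\Delta$; hence $\widetilde{D}^{-1}$ is also a continuous $k[[\bfs]]_\Delta$-algebra automorphism, and since $\widetilde{D}^{-1}=\widetilde{D^{-1}}$ (the isomorphism $r\mapsto\widetilde{r}$ of Lemma~\ref{lema:tilde-map}(1) is a group isomorphism on units), the map $\widetilde{D^{-1}}$ is a continuous $k[[\bfs]]_\Delta$-algebra homomorphism. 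It is compatible with the augmentation because $\widetilde{D}$ is an automorphism over the augmentation: applying the augmentation $A[[\bfs]]_\Delta\to A$ to $\widetilde{D}(\widetilde{D^{-1}}(a))=a$ and using that $\widetilde{D}$ preserves it shows $\widetilde{D^{-1}}$ does too. Alternatively and more concretely, $(D^{-1})_0=\Id_A$ by the formula in Lemma~\ref{lemma:unit-psr}, which via Proposition~\ref{prop:caracter_HS}(d) combined with $\widetilde{D^{-1}}=\widetilde{D}^{-1}$ gives the Leibniz identities directly. Either way, $D^{-1}\in\HS^\bfs_k(A;\Delta)$ by (b) $\Rightarrow$ (a).

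\textbf{Main obstacle.} There is no genuine obstacle here: the corollary is a formal consequence of Proposition~\ref{prop:caracter_HS} and the fact that $r\mapsto\widetilde{r}$ is an isomorphism of $k[[\bfs]]_\Delta$-algebras. The one point deserving care is the inverse step — one must invoke that $\widetilde{D}$ is a bi-continuous automorphism (not merely a homomorphism), which is exactly the ``moreover'' clause of Proposition~\ref{prop:caracter_HS}, so that $\widetilde{D}^{-1}$ is available as a continuous algebra map; the identification $\widetilde{D}^{-1}=\widetilde{D^{-1}}$ then transports this back to $R[[\bfs]]_\Delta$.
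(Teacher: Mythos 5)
Your proof is correct and follows essentially the route the paper intends: the corollary is stated without proof precisely because it is the formal consequence of Proposition \ref{prop:caracter_HS} (via the algebra isomorphism $r\mapsto\widetilde{r}$ and the identification of $\U^\bfs(R;\Delta)$ with $\Aut^\pcirc_{k[[\bfs]]_\Delta}(A[[\bfs]]_\Delta)$) that you spell out. Your care on the inverse step — invoking the bi-continuity clause so that $\widetilde{D}^{-1}=\widetilde{D^{-1}}$ is again a continuous algebra automorphism compatible with the augmentation — is exactly the right point to isolate.
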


If $\Delta' \subset \Delta \subset \N^{(\bfs)}$ are non-empty co-ideals, we obviously have group homomorphisms $\tau_{\Delta \Delta'}: \HS^\bfs_k(A;\Delta) \longrightarrow \HS^\bfs_k(A;\Delta')$.
 Since any $D\in\HS_k^\bfs(A;\Delta)$ is determined by its finite truncations, we have a natural group isomorphism
$$ \HS_k^\bfs(A) =  \lim_{\stackrel{\longleftarrow}{\substack{\scriptscriptstyle \Delta' \subset \Delta\\ \scriptscriptstyle  \sharp \Delta'<\infty}}} \HS_k^\bfs(A;\Delta').$$

In the case $\Delta' = \Delta^1 = \Delta\cap \ft_1(\bfs)$, since $\HS_k^\bfs(A;\Delta^1)\simeq \Der_k(A)^{\Delta^1}$, we can think on $\tau_{\Delta \Delta^1}$ as a group homomorphism $\tau_{\Delta\Delta^1}:\HS_k^\bfs(A;\Delta)\to \Der_k(A)^{\Delta^1}$ whose kernel is the normal subgroup of $\HS_k^\bfs(A;\Delta)$ consisting of HS-derivations $D$ with $D_\alpha = 0$ whenever $|\alpha|=1$.
\medskip

In the case $\Delta'= \Delta^n = \Delta\cap \ft_n(\bfs)$, for $n\geq 1$, we will simply write $\tau_n = \tau_{\Delta,\Delta^n}: \HS^\bfs_k(A;\Delta) \longrightarrow \HS^\bfs_k(A;\Delta^n)$.

\begin{remark} \label{nota:HS-as-Rees}
Since for any $D\in \HS^\bfs_k(A;\Delta)$ we have $D_{\alpha}\in \diff_{A/k}^{|\alpha|}(A)$, we may also think on $D$ as an element in a generalized Rees ring of the ring of differential operators:
$$ \widehat{\Rees}^\bfs\left(\DD_{A/k}(A);\Delta\right) := \left\{ \sum_{\scriptscriptstyle\alpha\in\Delta} r_\alpha \bfs^\alpha\in
\DD_{A/k}(A)[[\bfs]]_\Delta \ |\ r_\alpha \in \diff_{A/k}^{|\alpha|}(A)\right\}.
$$
\end{remark}

The group operation in $\HS^{\bfs}_k(A;\Delta)$ is explicitely given by
$$ (D,E) \in \HS^{\bfs}_k(A;\Delta) \times \HS^{\bfs}_k(A;\Delta) \longmapsto D \pcirc E \in \HS^{\bfs}_k(A;\Delta)$$
with
$$ (D \pcirc E)_\alpha = \sum_{\scriptscriptstyle \beta+\gamma=\alpha} D_\beta \pcirc E_\gamma,
$$
and the identity element of $\HS^{\bfs}_k(A;\Delta)$ is $\mathbb{I}$ with $\mathbb{I}_0 = \Id$ and 
$\mathbb{I}_\alpha = 0$ for all $\alpha \neq 0$. The inverse of a $D\in \HS^\bfs_k(A;\Delta)$ will be denoted by $D^*$. 
\medskip

\begin{proposition} Let $D\in \HS^{\bfs}_k(A;\Delta)$, $E \in \HS^{\bft}_k(A;\nabla)$  be HS-derivations. Then their external product $D\btimes E$ (see Definition \ref{defi:external-x}) is a $(\bfs\sqcup \bft,\nabla\times\Delta)$-variate HS-derivation. \index{external product (of HS-derivations)}
\end{proposition}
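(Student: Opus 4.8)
The plan is to verify directly that the power series $D\btimes E$ satisfies the defining Leibniz identities of a Hasse--Schmidt derivation. Recall that, under the canonical identification $\N^{(\bfs\sqcup\bft)}=\N^{(\bfs)}\times\N^{(\bft)}$, the set $\Delta\times\nabla$ is a non-empty co-ideal of $\N^{(\bfs\sqcup\bft)}$, so the statement makes sense, and by Definition~\ref{defi:external-x} applied with $R=\End_k(A)$ the $(\alpha,\beta)$-component of $D\btimes E\in\End_k(A)[[\bfs\sqcup\bft]]_{\Delta\times\nabla}$ is the composite $D_\alpha\pcirc E_\beta$. In particular its $0$-component is $D_0\pcirc E_0=\Id_A\pcirc\Id_A=\Id_A$, which is the first required identity; equivalently, one knows a priori from the last Corollary and Notation~\ref{notacion:Ump} that $D\btimes E\in\U^{\bfs\sqcup\bft}(\End_k(A);\Delta\times\nabla)$, so only the Leibniz rule is in question.

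For the Leibniz identity, fix $(\alpha,\beta)\in\Delta\times\nabla$ and $x,y\in A$. I would compute
$(D_\alpha\pcirc E_\beta)(xy)=D_\alpha\bigl(E_\beta(xy)\bigr)$ by first expanding $E_\beta(xy)=\sum_{\beta'+\beta''=\beta}E_{\beta'}(x)E_{\beta''}(y)$ using the Leibniz identities for $E$ (legitimate since $\beta',\beta''\le\beta$ lie in $\nabla$), and then expanding $D_\alpha$ on each product $E_{\beta'}(x)E_{\beta''}(y)$ via the Leibniz identities for $D$ (again legitimate since $\alpha',\alpha''\le\alpha$ lie in $\Delta$). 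Regrouping the resulting double sum over $\{\beta'+\beta''=\beta\}$ and $\{\alpha'+\alpha''=\alpha\}$ into a single sum over decompositions $(\alpha',\beta')+(\alpha'',\beta'')=(\alpha,\beta)$ in $\N^{(\bfs\sqcup\bft)}$ gives exactly
$\sum (D_{\alpha'}\pcirc E_{\beta'})(x)\,(D_{\alpha''}\pcirc E_{\beta''})(y)=\sum (D\btimes E)_{(\alpha',\beta')}(x)\,(D\btimes E)_{(\alpha'',\beta'')}(y)$,
which is the Leibniz identity for $D\btimes E$ at $(\alpha,\beta)$. This completes the argument.

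There is no genuine obstacle here: the verification is a direct bookkeeping exercise with the definitions, and the one point requiring a moment's care is the combinatorial identification of the set of decompositions of $(\alpha,\beta)$ in $\N^{(\bfs\sqcup\bft)}$ with the product of the sets of decompositions of $\alpha$ in $\N^{(\bfs)}$ and of $\beta$ in $\N^{(\bft)}$, which is precisely what makes the two nested Leibniz expansions collapse into the single Leibniz sum. Alternatively, one could phrase the same computation conceptually through Proposition~\ref{prop:caracter_HS} and Lemma~\ref{lemma:tilde-otimes}: since $\widetilde{D\btimes E}=\widetilde D\btimes\widetilde E$ and $r\mapsto\widetilde r$ is an isomorphism of $k[[\bfs\sqcup\bft]]_{\Delta\times\nabla}$-algebras (Lemma~\ref{lema:tilde-map}), it suffices to check that $\widetilde D\btimes\widetilde E$ is a continuous $k[[\bfs\sqcup\bft]]_{\Delta\times\nabla}$-algebra endomorphism of $A[[\bfs\sqcup\bft]]_{\Delta\times\nabla}$ compatible with the augmentation, and its restriction to $A$ is an algebra map by exactly the computation above; but this route ultimately contains the same calculation, so I would present the direct one.
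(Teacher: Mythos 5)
Your proof is correct, but it takes a more elementary route than the paper. The paper's proof is a one-liner: it invokes Lemma \ref{lemma:tilde-otimes} ($\widetilde{D\btimes E}=\widetilde{D}\btimes\widetilde{E}$) and then concludes by the characterization of HS-derivations in Proposition \ref{prop:caracter_HS} (an element of $\U^{\bfs\sqcup\bft}(R;\Delta\times\nabla)$ is an HS-derivation iff its tilde is an augmentation-compatible algebra endomorphism), so the whole content is absorbed into previously established machinery. You instead verify the Leibniz identities by hand, which works: the key point you correctly isolate is that decompositions $(\alpha',\beta')+(\alpha'',\beta'')=(\alpha,\beta)$ in $\N^{(\bfs\sqcup\bft)}$ are exactly pairs of a decomposition of $\alpha$ in $\N^{(\bfs)}$ and one of $\beta$ in $\N^{(\bft)}$, so the two nested Leibniz expansions (first for $E_\beta$ on $xy$, then for $D_\alpha$ on each product $E_{\beta'}(x)E_{\beta''}(y)$) regroup into the single Leibniz sum for $D\btimes E$. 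Your direct argument is self-contained and makes the combinatorics explicit; the paper's argument is shorter and reuses the dictionary between HS-derivations and algebra automorphisms, which is the organizing principle of the whole section. You in fact note the conceptual route yourself at the end, so the only real difference is the choice of which version to foreground. One cosmetic remark: the proposition's statement writes $\nabla\times\Delta$ while Definition \ref{defi:external-x} places $D\btimes E$ in $R[[\bfs\sqcup\bft]]_{\Delta\times\nabla}$; your use of $\Delta\times\nabla$ is the one consistent with the definition, and the discrepancy in the statement is an inessential transposition.
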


\begin{proof} From Lemma \ref{lemma:tilde-otimes} we know that $\widetilde{D\btimes E} = \widetilde{D}\btimes \widetilde{E}$ and we conclude by Proposition \ref{prop:caracter_HS}.
\end{proof}

\begin{definition}  \label{defi:sbullet-0}
For each $a\in A^\bfs$ and for each $D\in \HS_k^\bfs(A;\Delta)$, we define $a \sbullet D$ as
$$ (a\sbullet D)_\alpha := a^\alpha D_\alpha,\quad \forall \alpha\in \Delta.
$$
It is clear that $a \sbullet D\in \HS_k^\bfs(A;\Delta)$, $a'\sbullet (a\sbullet D) = (a'a)\sbullet D$, $1 \sbullet D=D$ and $0 \sbullet D=\mathbb{I}$.
\end{definition}

If $\Delta' \subset \Delta \subset \N^{(\bfs)}$ are non-empty co-ideals,
we have $\tau_{\Delta \Delta'}(a\sbullet D)=a\sbullet \tau_{\Delta \Delta'}(D)$. Hence, 
in the case $\Delta' = \Delta^1 = \Delta\cap \ft_1(\bfs)$, since $\HS_k^\bfs(A;\Delta^1)\simeq \Der_k(A)^{\Delta^1}$, the image of $\tau_{\Delta\Delta^1}:\HS_k^\bfs(A;\Delta)\to \Der_k(A)^{\Delta^1}$
is an $A$-submodule.
\medskip

The following lemma provides a dual way to express the Leibniz identity (\ref{eq:leibniz-HS}), \ref{nume:leibniz-HS}.

\begin{lemma} \label{lema:dual-leibniz-HS}
For each $D\in \HS^\bfs_k(A;\Delta)$ and for each $\alpha\in \Delta$, we have
$$
x D_\alpha =\sum_{\beta+\gamma=\alpha}D_\beta\, D^*_\gamma(x),\quad \forall x\in A.
$$
\end{lemma}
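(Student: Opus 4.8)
The identity to be proved, $x D_\alpha = \sum_{\beta+\gamma=\alpha} D_\beta\, D^*_\gamma(x)$, is the ``dual'' counterpart of the Leibniz identity $D_\alpha x = \sum_{\beta+\gamma=\alpha} D_\beta(x) D_\gamma$ recorded in \ref{nume:leibniz-HS}. The natural approach is to work inside the ring $R[[\bfs]]_\Delta$ with $R=\End_k(A)$, where $D\in\HS^\bfs_k(A;\Delta)\subset\U^\bfs(R;\Delta)$ by the corollary to Proposition \ref{prop:caracter_HS}, and to exploit the characterization (c) of that proposition together with the fact that $D^*$ is the inverse of $D$ in the group $\HS^\bfs_k(A;\Delta)$.

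First I would assemble the Leibniz identities for all $\alpha$ into a single power-series identity. Multiplying $D_\alpha x = \sum_{\beta+\gamma=\alpha}D_\beta(x)D_\gamma$ by $\bfs^\alpha$ and summing over $\alpha\in\Delta$, and recalling (Proposition \ref{prop:caracter_HS}(c)) that $Da = \widetilde D(a)D$ for all $a\in A[[\bfs]]_\Delta$, one gets in $R[[\bfs]]_\Delta$ the relation $D\cdot x = \widetilde D(x)\cdot D$ for $x\in A$, where on the left $x$ denotes the multiplication-by-$x$ operator viewed as an element of $R$, and $\widetilde D(x)=\sum_\gamma D_\gamma(x)\bfs^\gamma\in A[[\bfs]]_\Delta$ is viewed as a central multiplication operator. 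Now multiply this identity on the left by $D^*$ and on the right by $D^*$: since $D^*D = DD^* = \mathbb I$ (the unit of $R[[\bfs]]_\Delta$), we obtain $x\cdot D^* = D^*\cdot \widetilde D(x)$, equivalently $x\cdot D^* = D^*\cdot \widetilde{D}(x)$ as elements of $R[[\bfs]]_\Delta$. Reading off the coefficient of $\bfs^\alpha$ on both sides gives $x D^*_\alpha = \sum_{\beta+\gamma=\alpha} D^*_\beta\, D_\gamma(x)$ — which is the statement with the roles of $D$ and $D^*$ swapped. Since $D^*$ is itself an arbitrary element of $\HS^\bfs_k(A;\Delta)$ and $(D^*)^* = D$, substituting $D^*$ for $D$ throughout yields exactly the asserted identity $x D_\alpha = \sum_{\beta+\gamma=\alpha} D_\beta\, D^*_\gamma(x)$.

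Alternatively, and perhaps more transparently, one can argue directly with the defining relation $\sum_{\beta+\gamma=\alpha}D_\beta\pcirc D^*_\gamma = \mathbb{I}_\alpha$ (which is just $D\pcirc D^* = \mathbb I$ spelled out componentwise): apply both sides to a product $x\cdot(-)$, use $D^*_\gamma(xy)=\sum_{\gamma'+\gamma''=\gamma}D^*_{\gamma'}(x)D^*_{\gamma''}(y)$, and compare with $D^*$ applied after multiplication; then an induction on $|\alpha|$ isolating the term $\beta=0$ (where $D_0=\Id$) produces the claim. I would favour the first, computation-free route.

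The only point requiring a little care — and hence the ``main obstacle'', though it is a mild one — is bookkeeping about where the multiplication operators live: $x$ and the $D_\alpha$ are non-commuting elements of $R=\End_k(A)$, while the $\bfs^\alpha$ are central in $R[[\bfs]]_\Delta$, so one must be scrupulous about left versus right multiplication when passing between the operator identity in $R[[\bfs]]_\Delta$ and its coefficientwise form, and about the fact that $\widetilde{D}(x)$ enters as a coefficient series with values in $A\subset R$ rather than as $\widetilde D$ composed with anything. Once the identity $D\cdot x = \widetilde D(x)\cdot D$ is correctly set up via Proposition \ref{prop:caracter_HS}(c), the rest is formal cancellation using invertibility of $D$ in $\U^\bfs(R;\Delta)$, and extracting coefficients.
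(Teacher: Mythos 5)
Your proof is correct and uses the same two ingredients as the paper's own argument: the Leibniz identity (packaged as $Dx=\widetilde D(x)D$ via Proposition \ref{prop:caracter_HS}) and the relation $D\pcirc D^*=\mathbb{I}$. The paper just runs the equivalent computation coefficientwise — expanding $D_\beta\,D^*_\gamma(x)$ by Leibniz and collapsing the inner sum to $\mathbb{I}_e(x)$ — whereas you cancel $D$ against $D^*$ in $R[[\bfs]]_\Delta$ and then swap $D\leftrightarrow D^*$; the difference is purely presentational.
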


\begin{proof} We have
\begin{eqnarray*}
&\displaystyle \sum_{\beta+\gamma=\alpha}D_\beta\, D^*_\gamma(x) = \sum_{\beta+\gamma=\alpha} \sum_{\mu+\nu=\beta} D_\mu(D^*_\gamma(x)) D_\nu =
&\\
&\displaystyle  \sum_{e+\nu=\alpha} \left(
\sum_{\mu+\gamma=e} D_\mu(D^*_\gamma(x))\right) D_\nu = x D_\alpha.
\end{eqnarray*} 
\end{proof}

It is clear that the map
(\ref{eq:HS1-Der}) is an isomorphism of groups (with the addition on $\Der_k(A)$ as internal operation) and so $\HS_k^\bfs(A;\ft_1(\bfs))$ is abelian.
\medskip

\begin{notation} \label{notacion:pcirc-HS}
Let us denote
\begin{eqnarray*}
&\displaystyle  \Hom_{k-\text{\rm alg}}^\pcirc(A,A[[\bfs]]_\Delta) :=
&\\
&\displaystyle  \left\{ f \in \Hom_{k-\text{\rm alg}}(A,A[[\bfs]]_\Delta)\ |\ f(a) \equiv a\!\!\!\!\mod \fn^A_0(\bfs)/\Delta_A\  \forall a\in A \right\}, &\\
&\displaystyle  \Aut_{k[[\bfs]]_\Delta-\text{\rm alg}}^\pcirc(A[[\bfs]]_\Delta) :=
&\\
&\displaystyle 
 \left\{ f \in \Aut_{k[[\bfs]]_\Delta-\text{\rm alg}}^{\text{\rm top}}(A[[\bfs]]_\Delta)\ |\ f(a) \equiv a_0\!\!\!\!\mod \fn^A_0(\bfs)/\Delta_A\ \forall a\in A[[\bfs]]_\Delta \right\}.
\end{eqnarray*}
\end{notation}
It is clear that (see Notation \ref{notacion:pcirc}) $\Hom_{k-\text{\rm alg}}^\pcirc(A,A[[\bfs]]_\Delta) \subset \Hom_k^\pcirc(A,A[[\bfs]]_\Delta)$ and $\Aut_{k[[\bfs]]_\Delta-\text{\rm alg}}^\pcirc(A[[\bfs]]_\Delta) \subset \Aut_{k[[\bfs]]_\Delta}^\pcirc(A[[\bfs]]_\Delta)$  are subgroups and
we have  group isomorphisms (see (\ref{eq:Aut-iso-pcirc}) and  (\ref{eq:U-iso-pcirc})):
\begin{equation} \label{eq:HS-funda}
\begin{CD}
\HS^\bfs_k(A;\Delta) @>{D \mapsto \widetilde{D}}>{\simeq}> \Aut_{k[[\bfs]]_\Delta-\text{\rm alg}}^\pcirc(A[[\bfs]]_\Delta)
@>{\text{restriction}}>{\simeq}> \Hom_{k-\text{\rm alg}}^\pcirc(A,A[[\bfs]]_\Delta).
\end{CD}
\end{equation}
The composition of the above isomorphisms is given by
\begin{equation} \label{eq:HS-iso-Hom(A,A[[s]])}
 D\in \HS^\bfs_k(A;\Delta) \stackrel{\sim}{\longmapsto} \Phi_D := \left[a\in A \mapsto \sum_{\scriptscriptstyle \alpha\in\Delta} D_\alpha(a) \bfs^\alpha\right] \in \Hom_{k-\text{\rm alg}}^\pcirc(A,A[[\bfs]]_\Delta).
\end{equation}
For each HS-derivation $D\in \HS^\bfs_k(A;\Delta)$ we have
$$ \widetilde{D}\left( \sum_{\scriptscriptstyle \alpha\in\Delta} a_\alpha \bfs^\alpha \right) = 
\sum_{\scriptscriptstyle \alpha\in\Delta} \Phi_D(a_\alpha) \bfs^\alpha,
$$
for all $\sum_\alpha a_\alpha \bfs^\alpha \in A[[\bfs]]_\Delta$, and for any $E\in \HS^\bfs_k(A;\Delta)$ we have $\Phi_{D \smallcirc E} = \widetilde{D} \pcirc \Phi_E$. 
If $\Delta'\subset \Delta$ is another non-empty co-ideal and
 we denote by  $\pi_{\Delta \Delta'}: A[[\bfs]]_\Delta \to A[[\bfs]]_{\Delta'}$ the projection, one has $\Phi_{\tau_{\Delta \Delta'} (D)} = \pi_{\Delta \Delta'}\pcirc \Phi_D$. 

\begin{definition} \label{def:ell-new} For each HS-derivation
$E\in\HS^\bfs_k(A;\Delta)$, we denote 
$$ \elln(E) := \min \{r\geq 1 \ |\ \exists \alpha\in \Delta, |\alpha|= r, E_\alpha \neq 0 \} \geq 1$$
if $E\neq \mathbb{I}$ and $ \elln(E) = \infty$ if $E= \mathbb{I}$. In other words, $\ell (E) = \ord (E-\mathbb{I})$. 
Clearly, if $\Delta$ is bounded, then $\elln(E)>\max\{|\alpha|\ |\ \alpha\in\Delta\}  \Longleftrightarrow \elln(E)=\infty \Longleftrightarrow E = \mathbb{I}$.
\end{definition}

We obviously have $\elln(E\pcirc E') \geq \min \{\elln(E),\elln(E')\}$ and $\elln(E^*) = \elln(E)$. Moreover, if $\elln(E') > \elln(E)$, then $\elln(E\pcirc E')=\elln(E)$:
$$ \elln(E\pcirc E') = \ord (E\pcirc E' - \mathbb{I}) = \ord( E \pcirc (E' - \mathbb{I}) + (E - \mathbb{I}))
$$
and since $\ord (E \pcirc (E' - \mathbb{I})) \geq\footnote{Actually, here an equality holds since the $0$-term of $E$ (as a series) is $1$.} \ord ( E' - \mathbb{I}) = \elln(E') > \elln(E) =  \ord ( E - \mathbb{I})$ we obtain 
$$ \elln(E\pcirc E') = \cdots = \ord( E \pcirc (E' - \mathbb{I}) + (E - \mathbb{I})) = \ord (E - \mathbb{I}) = \elln(E).
$$

\begin{proposition}  \label{prop:ell-new}
For each $D\in \HS^{\bfs}_k(A;\Delta)$ we have that $D_\alpha$ is a $k$-linear differential operator or order $\leq \lfloor \frac{|\alpha|}{\elln(D)}\rfloor$
for all $\alpha\in \Delta$. In particular,  $D_\alpha$ is a $k$-derivation if $|\alpha|=\elln(D)$, whenever $\elln(D) < \infty$ (  $ \Leftrightarrow D\neq \mathbb{I}$). 
\end{proposition}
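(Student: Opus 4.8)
The plan is to prove, by induction on $|\alpha|$, the sharpened statement that $D_\alpha\in\diff_{A/k}^{\lfloor|\alpha|/\elln(D)\rfloor}(A)$ for every $\alpha\in\Delta$; the two assertions of the proposition are then immediate. We may assume $D\neq\mathbb{I}$, so that $\elln:=\elln(D)<\infty$, the case $D=\mathbb{I}$ being trivial ($D_0=\Id_A$, $D_\alpha=0$ for $\alpha\neq0$, and $\lfloor|\alpha|/\elln(D)\rfloor=0$). The base of the induction covers the indices with $|\alpha|<\elln$, for which $\lfloor|\alpha|/\elln\rfloor=0$: either $\alpha=0$ and $D_0=\Id_A\in\diff^0_{A/k}(A)$, or $0<|\alpha|<\elln$ and $D_\alpha=0$ by the very definition of $\elln(D)$; in both cases $D_\alpha\in\diff^0_{A/k}(A)$.

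For the inductive step, fix $\alpha$ with $|\alpha|\geq\elln$, set $n:=\lfloor|\alpha|/\elln\rfloor\geq1$, and assume the claim for all multi-indices of smaller norm. Denoting a multiplication operator by the same symbol as the element, I would use the Leibniz identity in the form (\ref{eq:leibniz-HS}) to compute, for each $x\in A$,
$$[D_\alpha,x]=D_\alpha\,x-x\,D_\alpha=\sum_{\substack{\beta+\gamma=\alpha\\ \beta\neq0}}D_\beta(x)\,D_\gamma,$$
the $\beta=0$ summand cancelling $x\,D_\alpha$. The decisive observation is that in any summand with $D_\beta(x)\neq0$ one has $D_\beta\neq0$ and $|\beta|\geq1$, hence $|\beta|\geq\elln$, and therefore $|\gamma|=|\alpha|-|\beta|\leq|\alpha|-\elln<|\alpha|$. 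The induction hypothesis thus applies to $D_\gamma$, and with the elementary floor identity $\lfloor(|\alpha|-\elln)/\elln\rfloor=n-1$ it gives $D_\gamma\in\diff_{A/k}^{n-1}(A)$. Since $\diff_{A/k}^{n-1}(A)$ is stable under left multiplication by elements of $A$, each $D_\beta(x)\,D_\gamma$, and hence $[D_\alpha,x]$, lies in $\diff_{A/k}^{n-1}(A)$. As this holds for all $x\in A$, the inductive characterization of the order filtration on $\DD_{A/k}$ yields $D_\alpha\in\diff_{A/k}^{n}(A)$, completing the induction.

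For the last assertion, if $\elln(D)<\infty$ and $|\alpha|=\elln(D)$ then $n=1$, so $D_\alpha\in\diff^1_{A/k}(A)$; moreover, in the Leibniz identity $D_\alpha(xy)=\sum_{\beta+\gamma=\alpha}D_\beta(x)D_\gamma(y)$ every term with both $\beta,\gamma\neq0$ vanishes (then $0<|\beta|<\elln(D)$ forces $D_\beta=0$), leaving only $D_\alpha(xy)=x\,D_\alpha(y)+D_\alpha(x)\,y$, so $D_\alpha$ is a $k$-derivation. The only point requiring care in the whole argument is the bookkeeping of the commutator identity above — specifically, recognising that a nonvanishing summand forces $|\beta|\geq\elln(D)$, which is exactly what upgrades the crude bound $D_\gamma\in\diff_{A/k}^{|\gamma|}(A)$ to the sharp $D_\gamma\in\diff_{A/k}^{n-1}(A)$ needed to close the induction; the floor identity, the $A$-linearity of the order filtration, and its inductive definition by iterated commutators with multiplications are all routine.
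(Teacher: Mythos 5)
Your proof is correct and follows essentially the same route as the paper: both isolate the commutator $[D_\alpha,x]=\sum_{\beta+\gamma=\alpha,\,\beta\neq 0}D_\beta(x)D_\gamma$ from the Leibniz identity, observe that a nonvanishing summand forces $|\beta|\geq\elln(D)$ and hence $\lfloor|\gamma|/\elln(D)\rfloor\leq\lfloor|\alpha|/\elln(D)\rfloor-1$, and close the induction via the inductive characterization of the order filtration (the paper inducts on $q_\alpha=\lfloor|\alpha|/\elln(D)\rfloor$ rather than on $|\alpha|$, an immaterial difference). Your explicit verification of the Leibniz rule for the components with $|\alpha|=\elln(D)$ is a small welcome addition the paper leaves implicit.
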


\begin{proof} We may assume $D\neq \mathbb{I}$. Let us call $n:=\elln(D) <\infty$ and, for each $\alpha\in \Delta$,  $q_\alpha := \lfloor \frac{|\alpha|}{n}\rfloor$ and $r_\alpha := |\alpha| - q_\alpha n $, 
 $0\leq r_\alpha < n$. We proceed by induction on $q_\alpha$. If $q_\alpha=0$, then $|\alpha|< n$, $D_\alpha =0$ and the result is clear. Assume that the order of $D_\beta$ is less or equal than $q_\beta$ whenever $0\leq q_\beta \leq q$. Now take $\alpha\in \Delta$ with $q_\alpha = q+1$. For any $a\in A$ we have
$$ [D_\alpha,a] = \sum_{\substack{\scriptscriptstyle \gamma+\beta=\alpha\\ \scriptscriptstyle |\gamma|>0}} D_\gamma(a) D_\beta = \sum_{\substack{\scriptscriptstyle \gamma+\beta=\alpha\\ \scriptscriptstyle |\gamma|\geq n}} D_\gamma(a) D_\beta,
$$
but any $\beta$ in the index set of the above sum must have norm $\leq |\alpha|-n$ and so $q_\beta < q_\alpha =q+1$ and $D_\beta$ has order $\leq q_\beta$. Hence $[D_\alpha,a]$ has order $\leq q$ for any $a\in A$ and $D_\alpha$ has order $\leq q+1 =q_\alpha$.
\end{proof}

The following example shows that the group structure on HS-derivations takes into account the Lie bracket on usual derivations.

\begin{example} If $D,E\in \HS^{\bfs}_k(A;\Delta)$, then we may apply the above proposition to $[D,E] = D\pcirc E\pcirc D^*\pcirc E^*$ to deduce that $[D,E]_\alpha \in \Der_k(A)$ whenever $|\alpha|=2$.
Actually, for $|\alpha|=2$ we have:
$$[D,E]_\alpha = \left\{ 
\begin{array}{lcl}
[D_{\bfs^t},E_{\bfs^t}] & \text{if} & \alpha= 2 \bfs^t \\  
 {[}D_{\bfs^t},E_{\bfs^u}{]} + [D_{\bfs^u},E_{\bfs^t}] & \text{if} & \alpha= \bfs^t + \bfs^u,\ \text{with}\ t\neq u.
\end{array}
\right.
$$
\end{example}

\begin{proposition} \label{prop:ell-corchete}
For any $D,E\in \HS^{\bfs}_k(A;\Delta)$ we have $ \elln([D,E]) \geq \elln(D) + \elln(E)$.
\end{proposition}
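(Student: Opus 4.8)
The plan is to unwind the group commutator $[D,E] = D\pcirc E\pcirc D^*\pcirc E^*$ inside the ambient ring and thereby reduce the estimate to the (essentially trivial) analogous estimate for the associative commutator. First I would recall, from Proposition~\ref{prop:caracter_HS} and the corollary just after it, that $\HS^\bfs_k(A;\Delta)$ is a subgroup of the unit group $\U^\bfs(R;\Delta)$ of the ring $R[[\bfs]]_\Delta$, where $R = \End_k(A)$, that the group operation $\pcirc$ is precisely the multiplication of that ring, and that $D^*$ is the ring-theoretic inverse $D^{-1}$. Writing $D = \mathbb{I} + D'$ and $E = \mathbb{I} + E'$ with $D' := D - \mathbb{I}$, $E' := E - \mathbb{I}$, by definition of $\elln$ one has $\ord(D') = \elln(D)$ and $\ord(E') = \elln(E)$ (with the convention $\ord(0) = \infty$, which takes care of the cases $D = \mathbb{I}$ or $E = \mathbb{I}$ automatically).

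The key step is the identity obtained by right-multiplying $[D,E] = D\,E\,D^{-1}\,E^{-1}$ by $E\,D$, namely $[D,E]\,(E\,D) = D\,E$, which yields
\[ [D,E] - \mathbb{I} = (D\,E - E\,D)\,(E\,D)^{-1} = (D\,E - E\,D)\,D^{-1}\,E^{-1}. \]
Expanding $D = \mathbb{I} + D'$ and $E = \mathbb{I} + E'$, all the terms involving $\mathbb{I}$ or only one of $D',E'$ cancel, leaving the familiar "Lie bracket to first order" expression $D\,E - E\,D = D'E' - E'D'$.

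To conclude I would invoke two elementary facts about the order filtration on $R[[\bfs]]_\Delta$: it is super-additive under multiplication, $\ord(xy) \geq \ord(x) + \ord(y)$ (the coefficient of $\bfs^\gamma$ in $xy$ with $|\gamma| < \ord(x) + \ord(y)$ vanishes for degree reasons), and $\ord(D^{-1}) = \ord(E^{-1}) = 0$, since these are units with $0$-degree coefficient $\Id$. Combining these with $\ord(x-y) \geq \min\{\ord(x),\ord(y)\}$,
\[ \elln([D,E]) = \ord\bigl([D,E] - \mathbb{I}\bigr) \geq \ord(D'E' - E'D') \geq \ord(D') + \ord(E') = \elln(D) + \elln(E), \]
which is the assertion.

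I do not expect any genuine obstacle here: the only point that needs a word of justification is the translation between the group structure of $\HS^\bfs_k(A;\Delta)$ and the ring structure of $R[[\bfs]]_\Delta$ — in particular that $D^*$ really is the ring inverse $D^{-1}$ — and this has already been set up in the preceding section. Everything else is the standard observation that a group commutator of units unwinds, modulo higher order, to the associative commutator, transported verbatim to the setting of the order filtration.
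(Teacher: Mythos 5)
Your proof is correct, and it takes a genuinely different route from the paper's. The paper proves the proposition by brute force: it expands $[D,E]_\alpha$ as the fourfold convolution sum $\sum_{\beta+\gamma+\lambda+\mu=\alpha} D_\beta\pcirc E_\gamma\pcirc D^*_\lambda\pcirc E^*_\mu$ for $0<|\alpha|<\elln(D)+\elln(E)$, splits it according to which of $\beta,\lambda$ vanish, kills the low-order terms using $D_\beta=D^*_\beta=0$ for $0<|\beta|<\elln(D)$ (and similarly for $E$), and finally collapses what remains to $\sum_{\beta+\lambda=\alpha}D_\beta\pcirc D^*_\lambda=0$. You instead work structurally inside $R[[\bfs]]_\Delta$: the identity $[D,E]-\mathbb{I}=(DE-ED)(ED)^{-1}$ together with $DE-ED=D'E'-E'D'$ reduces everything to the superadditivity of $\ord$ under multiplication and the fact that $\ord((ED)^{-1})=0$, all of which is established in Section 1 (and the identification of $\pcirc$ with ring multiplication and of $D^*$ with the ring inverse is Proposition \ref{prop:caracter_HS} and its corollary, as you note). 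Your argument is shorter and makes the cancellation mechanism transparent, and it is entirely in the spirit of the computation the paper itself performs just before the proposition, where $\elln(E\pcirc E')$ is analysed via $\ord\bigl(E\pcirc(E'-\mathbb{I})+(E-\mathbb{I})\bigr)$; what the paper's componentwise proof buys in exchange is an explicit formula for which terms survive at each multi-index, but for the stated inequality your route is the cleaner one. The only point worth making explicit in a write-up is that superadditivity of $\ord$ survives the passage to the quotient $R[[\bfs]]_\Delta$ (it does: the coefficient of $\bfs^\alpha$ in a product, for $\alpha\in\Delta$, is the same truncated convolution sum, and dropping the terms with index outside $\Delta$ can only increase the order).
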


\begin{proof} We may assume $D,E\neq \mathbb{I}$. Let us write $m=\elln(D)=\ell(D^*)$,  $n=\elln(E)=\elln(E^*)$.  We have $D_\beta=D^*_\beta = 0$ whenever $0<|\beta| < m$
and
$E_\gamma=E^*_\gamma = 0$ whenever $0<|\gamma| < n$.

Let $\alpha\in\Delta$ be with $0<|\alpha|<m+n$. If $|\alpha|< m$ or $|\alpha|<n$ it is clear that $[D,E]_\alpha=0$. Assume that $m,n\leq |\alpha| < m+n$:
\begin{eqnarray*}
& \displaystyle
 [D,E]_\alpha = \sum_{\scriptscriptstyle \beta + \gamma + \lambda + \mu =\alpha} D_\beta \pcirc E_\gamma \, D^*_\lambda \, E^*_\mu =
\sum_{\scriptscriptstyle \gamma  + \mu =\alpha}  \ E_\gamma  \, E^*_\mu +
&\\
& \displaystyle
 \sum_{\substack{\scriptscriptstyle \beta + \gamma + \lambda + \mu =\alpha\\ \scriptscriptstyle |\beta+\lambda|> 0}} D_\beta \, E_\gamma \, D^*_\lambda \, E^*_\mu = 
0 +
\sum_{\substack{\scriptscriptstyle \gamma + \lambda + \mu =\alpha\\ \scriptscriptstyle |\lambda|>0}}   E_\gamma \, D^*_\lambda \, E^*_\mu +
\sum_{\substack{\scriptscriptstyle \beta +\gamma  + \mu =\alpha\\ \scriptscriptstyle |\beta|>0}}  D_\beta\, E_\gamma  \, E^*_\mu +
&\\
&\displaystyle
\sum_{\substack{\scriptscriptstyle \beta + \gamma + \lambda + \mu =\alpha\\ \scriptscriptstyle
|\beta|, |\lambda| >0 }} D_\beta \, E_\gamma \, D^*_\lambda \, E^*_\mu =
\sum_{\substack{\scriptscriptstyle \gamma + \lambda + \mu =\alpha\\ \scriptscriptstyle |\lambda|\geq m}}   E_\gamma \, D^*_\lambda \, E^*_\mu +
\sum_{\substack{\scriptscriptstyle \beta +\gamma  +
 \mu =\alpha\\ \scriptscriptstyle |\beta|\geq m}}  D_\beta\, E_\gamma  \, E^*_\mu +
&\\
&\displaystyle
\sum_{\substack{\scriptscriptstyle \beta + \gamma + \lambda + \mu =\alpha\\ \scriptscriptstyle
|\beta|, |\lambda| \geq m }} D_\beta \, E_\gamma \, D^*_\lambda \, E^*_\mu =
D^*_\alpha+
\sum_{\substack{\scriptscriptstyle \gamma + \lambda + \mu =\alpha\\ \scriptscriptstyle |\lambda|\geq m, |\gamma+\mu|>0}}   E_\gamma \, D^*_\lambda \, E^*_\mu +
D_\alpha+
&\\
&\displaystyle
\sum_{\substack{\scriptscriptstyle \beta   +
 \mu =\alpha\\ \scriptscriptstyle |\beta|\geq m\\ \scriptscriptstyle  |\gamma  +
 \mu|>0 }}  D_\beta\, E_\gamma  \, E^*_\mu +
\sum_{\substack{\scriptscriptstyle \beta  + \lambda  =\alpha\\ \scriptscriptstyle
|\beta|, |\lambda| \geq m }} D_\beta \, D^*_\lambda  +
\sum_{\substack{\scriptscriptstyle \beta + \gamma + \lambda + \mu =\alpha\\ \scriptscriptstyle
|\beta|, |\lambda| \geq m\\ \scriptscriptstyle |\gamma + \mu|>0  }} D_\beta \, E_\gamma \, D^*_\lambda \, E^*_\mu =
&\\
&\displaystyle
D^*_\alpha+ 0 +
D_\alpha + 0 +
\sum_{\substack{\scriptscriptstyle \beta  + \lambda  =\alpha\\ \scriptscriptstyle
|\beta|, |\lambda| > 0 }} D_\beta \, D^*_\lambda  + 0 = \sum_{\scriptscriptstyle \beta  + \lambda  =\alpha} D_\beta \, D^*_\lambda = 0.
\end{eqnarray*}
So, $ \elln([D,E]) \geq \elln(D) + \elln(E)$.
\end{proof}

\begin{corollary} Assume that $\Delta$ is bounded and let $m$ be the $\max$ of $|\alpha|$ with $\alpha\in\Delta$. Then, the group $\HS^{\bfs}_k(A;\Delta)$ is nilpotent of nilpotent class $\leq m$, where a central series is\footnote{Let us notice that 
$\{E\in\HS^\bfs_k(A;\Delta)\ |\ \elln(E) > r\} = \ker \tau_{\Delta,\Delta_r}$.}
$$ \{\mathbb{I}\} = \{E|\ \elln(E)  >m \} \vartriangleleft \{E|\ \elln(E) \geq m\} \vartriangleleft \cdots \vartriangleleft \{E|\ \elln(E) \geq 1\} = \HS^{\bfs}_k(A;\Delta).
$$
\end{corollary}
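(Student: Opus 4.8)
The plan is to show that the displayed chain of subsets is a \emph{central series} of the group $G:=\HS^{\bfs}_k(A;\Delta)$ in the usual group-theoretic sense, whence nilpotency of class $\leq m$ is immediate. For each integer $r\geq 1$ put $G_r:=\{E\in G\ |\ \elln(E)\geq r\}$, so that the proposed series is $\{\mathbb{I}\}=G_{m+1}\vartriangleleft G_m\vartriangleleft\cdots\vartriangleleft G_1=G$. Here $G_1=G$ because $\elln(E)\geq 1$ for every $E\in G$, and $G_{m+1}=\{\mathbb{I}\}$ because $\Delta$ is bounded by $m$, so that (as noted in Definition \ref{def:ell-new}) $\elln(E)>m$ forces $E=\mathbb{I}$.

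First I would verify that each $G_r$ is a normal subgroup of $G$. For $r=1$ this is trivial; for $r\geq 2$ one uses the observation recorded in the footnote above: $G_r=\{E\in G\ |\ \elln(E)>r-1\}=\ker\bigl(\tau_{\Delta,\Delta^{r-1}}\colon\HS^{\bfs}_k(A;\Delta)\to\HS^{\bfs}_k(A;\Delta^{r-1})\bigr)$, and the kernel of a group homomorphism is a normal subgroup. (That $G_r$ is a subgroup also follows directly from $\elln(E\pcirc E')\geq\min\{\elln(E),\elln(E')\}$ and $\elln(E^{*})=\elln(E)$, but the kernel description additionally gives normality for free.) The inclusions $G_{r+1}\subseteq G_r$ are clear from the definition of $\elln$.

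The one substantive point is the centrality of the successive quotients, and here Proposition \ref{prop:ell-corchete} does all the work: for $D\in G$ and $E\in G_r$ one has $\elln([D,E])\geq\elln(D)+\elln(E)\geq 1+r$, hence $[D,E]\in G_{r+1}$. Since $G_{r+1}$ is a subgroup of $G$ containing every such commutator, it contains the subgroup $[G,G_r]$ generated by them; equivalently, $G_r/G_{r+1}$ lies in the centre of $G/G_{r+1}$. Therefore $\{\mathbb{I}\}=G_{m+1}\vartriangleleft\cdots\vartriangleleft G_1=G$ is a central series of length $m$, so $G$ is nilpotent of nilpotency class $\leq m$. There is no real obstacle beyond invoking the already-proved Proposition \ref{prop:ell-corchete} and the elementary remark that the level sets of $\elln$ are truncation kernels; the remainder is just the bookkeeping in the definition of a central series.
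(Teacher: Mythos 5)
Your proof is correct and is exactly the argument the paper intends (the corollary is stated without proof, immediately after Proposition \ref{prop:ell-corchete} and the footnote identifying the level sets of $\elln$ as truncation kernels, which are precisely the two ingredients you use). The verification of normality via $G_r=\ker\tau_{\Delta,\Delta^{r-1}}$ and of centrality via $\elln([D,E])\geq\elln(D)+\elln(E)$ is complete and accurate.
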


\begin{proposition} \label{prop:expresion-inverse-multi-HS}
For each $D\in \HS_k^\bfs(A;\Delta)$, its inverse $D^*$ is given by $D^*_0=\Id$ and
$$ 
D^*_\alpha = \sum_{d=1}^{|\alpha|} (-1)^d \sum_{\alpha^\bullet \in \Par(\alpha,d)}
D_{\alpha^1}\pcirc \cdots \pcirc D_{\alpha^d},\quad \alpha\in\Delta.$$
Moreover, $\sigma_{|\alpha|}(D^*_\alpha) = (-1)^{|\alpha|}
\sigma_{|\alpha|}(D_\alpha)$. 
\end{proposition}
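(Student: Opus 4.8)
For the first assertion I would simply invoke Lemma~\ref{lemma:unit-psr}. Writing $R=\End_k(A)$, Proposition~\ref{prop:caracter_HS} puts $D$ inside $\U^\bfs(R;\Delta)$, and the group law of HS-derivations, read inside $R[[\bfs]]_\Delta$, is precisely the ring multiplication there; so $D^*$ is the ring-theoretic inverse of $D$, and since $D_0=\Id$, Lemma~\ref{lemma:unit-psr} applied to the ring $R$ delivers the stated formula verbatim, the product $r_{\alpha^1}\cdots r_{\alpha^d}$ in $R$ being composition of endomorphisms.

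For the symbol identity the plan is to avoid computing directly with the explicit inverse formula and instead to package the top-order symbols of an HS-derivation into a single power series. Setting $\mathcal G:=\gr\DD_{A/k}$ (a commutative graded ring) I would define, for $D\in\HS_k^\bfs(A;\Delta)$, the element $\sigma(D):=\sum_{\alpha\in\Delta}\sigma_{|\alpha|}(D_\alpha)\,\bfs^\alpha\in\mathcal G[[\bfs]]_\Delta$, which makes sense because $D_\alpha\in\diff_{A/k}^{|\alpha|}(A)$. The first step is to check that $\sigma$ is a group homomorphism $\HS_k^\bfs(A;\Delta)\to\U^\bfs(\mathcal G;\Delta)$: from $\diff_{A/k}^{m}(A)\pcirc\diff_{A/k}^{n}(A)\subseteq\diff_{A/k}^{m+n}(A)$ and the multiplicativity of principal symbols one gets $\sigma_{|\alpha|}\big((D\pcirc E)_\alpha\big)=\sum_{\beta+\gamma=\alpha}\sigma_{|\beta|}(D_\beta)\,\sigma_{|\gamma|}(E_\gamma)$, that is $\sigma(D\pcirc E)=\sigma(D)\,\sigma(E)$; since $\sigma(D)$ has constant term $\sigma_0(\Id)=1$ it is a unit of $\mathcal G[[\bfs]]_\Delta$ (Lemma~\ref{lemma:unit-psr}), hence lies in $\U^\bfs(\mathcal G;\Delta)$. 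In particular $\sigma(D^*)=\sigma(D)^{-1}$.

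The remaining point is to identify $\sigma(D)^{-1}$. Here I would use the scaling of Definition~\ref{defi:sbullet-0} with the constant family $\mathbf{c}=(-1)_{s\in\bfs}\in A^\bfs$: the HS-derivation $E:=\mathbf{c}\sbullet D$ has $E_\alpha=(-1)^{|\alpha|}D_\alpha$, so $\sigma(E)=\sum_\alpha(-1)^{|\alpha|}\sigma_{|\alpha|}(D_\alpha)\,\bfs^\alpha$, and the claim becomes $\sigma(E)=\sigma(D)^{-1}$, i.e. $\sigma(D\pcirc E)=1$. To see this, observe that $D\pcirc E$ has vanishing linear part: $(D\pcirc E)_{\bfs^t}=E_{\bfs^t}+D_{\bfs^t}=-D_{\bfs^t}+D_{\bfs^t}=0$ for every $t\in\bfs$, so $\elln(D\pcirc E)\ge2$; then Proposition~\ref{prop:ell-new} forces each $(D\pcirc E)_\alpha$ to have order $\le\lfloor|\alpha|/2\rfloor\le|\alpha|-1$ whenever $|\alpha|\ge1$, whence $\sigma_{|\alpha|}\big((D\pcirc E)_\alpha\big)=0$ for $\alpha\ne0$ and $\sigma(D\pcirc E)=1$. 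Comparing coefficients of $\bfs^\alpha$ in $\sigma(D^*)=\sigma(D)^{-1}=\sigma(E)$ then gives $\sigma_{|\alpha|}(D^*_\alpha)=(-1)^{|\alpha|}\sigma_{|\alpha|}(D_\alpha)$. (The same argument incidentally shows $\sigma(D)$ depends only on the derivations $D_{\bfs^t}$.)

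The only genuinely delicate ingredient is the compatibility of $\sigma$ with composition — the inclusion $\diff^m\pcirc\diff^n\subseteq\diff^{m+n}$ together with $\sigma_{m+n}(P\pcirc Q)=\sigma_m(P)\sigma_n(Q)$, which is exactly what gives $\gr\DD_{A/k}$ its commutative ring structure; granted that, everything else is formal manipulation in $\mathcal G[[\bfs]]_\Delta$ plus the single appeal to Proposition~\ref{prop:ell-new}. By contrast, attacking the symbol identity head-on from the explicit inverse formula is awkward: the $d=|\alpha|$ summand contributes $(-1)^{|\alpha|}$ times a sum of products of linear symbols, but the lower summands interact in a characteristic-sensitive way, so routing the proof through the group homomorphism $\sigma$ is what keeps it short.
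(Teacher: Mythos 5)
Your proposal is correct and follows essentially the same route as the paper: the first formula is read off from Lemma~\ref{lemma:unit-psr} via $\HS_k^\bfs(A;\Delta)\subset\U^\bfs(\End_k(A);\Delta)$, and the symbol identity rests on the same two ingredients the paper uses, namely the scaled derivation $(\mathbf{-1})\sbullet D$ and Proposition~\ref{prop:ell-new} applied to the product $D\pcirc((\mathbf{-1})\sbullet D)$, which has $\elln\geq 2$. The only difference is organizational: you package the top-order symbols into a group homomorphism $\sigma$ into $\U^\bfs(\gr\DD_{A/k};\Delta)$ and conclude $\sigma(D^*)=\sigma(D)^{-1}=\sigma((\mathbf{-1})\sbullet D)$, whereas the paper writes $D^*=((\mathbf{-1})\sbullet D)\pcirc E^*$ with $E=D\pcirc((\mathbf{-1})\sbullet D)$ and kills the extra terms directly — the same computation in slightly different clothing.
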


\begin{proof} The first assertion is a straightforward consequence of Lemma \ref{lemma:unit-psr}.
For the second assertion, first we have $D^*_\alpha = - D_\alpha$ for all $\alpha$ with $|\alpha|=1$, and if we denote by $\mathbf{-1} \in A^\bfs$ the constant family $-1$ and $E= D \pcirc ((\mathbf{-1}) \sbullet D)$, we have $\elln(E)> 1$. So, $D^* = ((\mathbf{-1}) \sbullet D) \pcirc E^*$ and $$D^*_\alpha = \sum_{\scriptscriptstyle \beta+\gamma=\alpha} (-1)^{|\beta|} D_\beta E^*_\gamma = (-1)^{|\alpha|} D_\alpha +
\sum_{\substack{\scriptscriptstyle \beta+\gamma=\alpha\\ \scriptscriptstyle |\gamma|>0}} (-1)^{|\beta|} D_\beta E^*_\gamma.
$$
From Proposition \ref{prop:ell-new}, we know that $E^*_\gamma$ is a differential operator of order strictly less than $|\gamma|$ and so 
$\sigma_{|\alpha|}(D^*_\alpha) = (-1)^{|\alpha|}
\sigma_{|\alpha|}(D_\alpha)$.
\end{proof}

\section{The action of substitution maps on HS-derivations}

In this section, $k$ will be a commutative ring, $A$ a commutative $k$-algebra, $R=\End_k(A)$, $\bfs$, $\bft$ sets and $\Delta \subset \N^{(\bfs)}$, $\nabla \subset \N^{(\bft)}$ non-empty co-ideals.
\medskip

We are going to extend the operation $(a,D) \in A^\bfs \times \HS^{\bfs}_k(A;\Delta)  \mapsto a\sbullet D \in \HS^{\bfs}_k(A;\Delta)$ (see Definition \ref{defi:sbullet-0}) by means of the constructions in section 
\ref{sec:action-substi}. 

\begin{proposition} \label{prop:equiv-action-subs-HS}
For any substitution map $\varphi:A[[\bfs]]_\Delta \to A[[\bft]]_\nabla$, we have:
\begin{enumerate}
\item[(1)] $\varphi_*\left(\Hom_{k-\text{\rm alg}}^\pcirc(A,A[[\bfs]]_\Delta)\right) \subset \Hom_{k-\text{\rm alg}}^\pcirc(A,A[[\bft]]_\nabla)$,
\item[(2)] $\varphi_R \left( \HS^\bfs_k(A;\Delta)\right) \subset \HS^\bft_k(A;\nabla)$,
\item[(3)] $ \overline{\varphi_*} \left( \Aut_{k[[\bfs]]_\Delta-\text{\rm alg}}^\pcirc(A[[\bfs]]_\Delta) \right) \subset
\Aut_{k[[\bft]]_\nabla-\text{\rm alg}}^\pcirc(A[[\bft]]_\nabla)$.
\end{enumerate}
\end{proposition}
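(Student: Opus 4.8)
The plan is to prove (1) directly and then obtain (2) and (3) as formal consequences. The mechanism is the chain of canonical isomorphisms (\ref{eq:HS-funda})--(\ref{eq:HS-iso-Hom(A,A[[s]])}) together with the commutative diagram of Section \ref{sec:action-substi}, specialised to $E=A$ and $R=\End_k(A)$, which intertwines $\varphi_R$, $\overline{\varphi_*}$ and $\varphi_*$ with the bijections $D\mapsto\widetilde{D}$ and $f\mapsto f|_A$. That diagram (see (\ref{eq:diag-funda})) already records the ``non-algebra'' inclusions $\overline{\varphi_*}(\Aut_{k[[\bfs]]_\Delta}^\pcirc(A[[\bfs]]_\Delta))\subset\Aut_{k[[\bft]]_\nabla}^\pcirc(A[[\bft]]_\nabla)$ and $\varphi_*(\Hom_k^\pcirc(A,A[[\bfs]]_\Delta))\subset\Hom_k^\pcirc(A,A[[\bft]]_\nabla)$, as well as $\varphi_R(\U^\bfs(R;\Delta))\subset\U^\bft(R;\nabla)$. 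Since $\HS^\bfs_k(A;\Delta)$, $\Aut_{k[[\bfs]]_\Delta-\text{\rm alg}}^\pcirc(A[[\bfs]]_\Delta)$ and $\Hom_{k-\text{\rm alg}}^\pcirc(A,A[[\bfs]]_\Delta)$ are exactly the loci of ``algebra-compatible'' elements inside these three corresponding groups, and the vertical maps of the diagram are conjugate under the horizontal bijections, it suffices to check that $\varphi_*$ preserves the property of being a $k$-algebra homomorphism.

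This check is immediate: by Definition \ref{def:substitution_maps} a substitution map is an $A$-algebra map, hence a $k$-algebra map, so for $f\in\Hom_{k-\text{\rm alg}}(A,A[[\bfs]]_\Delta)$ the composite $\varphi_*(f)=\varphi\pcirc f$ is again a $k$-algebra map $A\to A[[\bft]]_\nabla$. Together with the inclusion $\varphi_*(\Hom_k^\pcirc)\subset\Hom_k^\pcirc$ from (\ref{eq:diag-funda}) --- which also follows from $\varphi|_A=\Id_A$ and $\varphi(\fn^A_0(\bfs)/\Delta_A)\subset\fn^A_0(\bft)/\nabla_A$, the latter because $\varphi(s)\in\fn^A_0(\bft)/\nabla_A$ for all $s\in\bfs$, cf.\ (\ref{eq:compat-augment}) --- this proves (1).

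To spell out the passage to (3) and (2): given $D\in\HS^\bfs_k(A;\Delta)$, the diagram of Section \ref{sec:action-substi} gives $\widetilde{\varphi_R(D)}\,|_A=\varphi_*(\widetilde{D}|_A)=\varphi\pcirc\Phi_D$, which lies in $\Hom_{k-\text{\rm alg}}^\pcirc(A,A[[\bft]]_\nabla)$ by (1). Now $\widetilde{\varphi_R(D)}=\overline{\varphi_*}(\widetilde{D})$ belongs to $\End_{k[[\bft]]_\nabla}^{\text{\rm top}}(A[[\bft]]_\nabla)$, and the restriction map $\End_{k[[\bft]]_\nabla}^{\text{\rm top}}(A[[\bft]]_\nabla)\to\Hom_k(A,A[[\bft]]_\nabla)$ is injective (Lemma \ref{lema:tilde-map}(2)) and by (\ref{eq:HS-funda}) carries $\Aut_{k[[\bft]]_\nabla-\text{\rm alg}}^\pcirc(A[[\bft]]_\nabla)$ onto $\Hom_{k-\text{\rm alg}}^\pcirc(A,A[[\bft]]_\nabla)$; hence $\widetilde{\varphi_R(D)}\in\Aut_{k[[\bft]]_\nabla-\text{\rm alg}}^\pcirc(A[[\bft]]_\nabla)$, which is (3), and therefore $\varphi_R(D)\in\HS^\bft_k(A;\nabla)$ by Proposition \ref{prop:caracter_HS}, which is (2).

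I do not expect a real obstacle here; the only point requiring care is the bookkeeping --- verifying that the commutative diagram of Section \ref{sec:action-substi} restricts compatibly to the three ``$k$-algebra'' subgroups and that the various identifications line up --- while the genuinely new ingredient, that post-composition with $\varphi$ preserves $k$-algebra homomorphisms, is trivial since $\varphi$ itself is an algebra map.
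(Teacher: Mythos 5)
Your proposal is correct and follows essentially the same route as the paper: prove inclusion (1) directly (post-composition with $\varphi$ preserves $k$-algebra maps because $\varphi$ is an $A$-algebra map, and preserves the ``$\pcirc$'' congruence because $\varphi(\fn^A_0(\bfs)/\Delta_A)\subset\fn^A_0(\bft)/\nabla_A$ and $\varphi|_A=\Id_A$), then transfer to (2) and (3) via diagram (\ref{eq:diag-funda}) and the isomorphisms (\ref{eq:HS-funda}). The paper's proof is just a terser version of the same argument, leaving the bookkeeping for (2) and (3) implicit.
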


\begin{proof} By using diagram (\ref{eq:diag-funda}) and (\ref{eq:HS-funda}), it is enough to prove 
the first inclusion, but if $f\in \Hom_{k-\text{\rm alg}}^\pcirc(A,A[[\bfs]]_\Delta)$, it is clear that $\varphi_*(f)= \varphi \pcirc f: A \to A[[\bft]]_\nabla$ is a $k$-algebra map. Moreover, since 
$\varphi(\ft^A_0(\bfs)/\Delta_A) \subset \ft^A_0(\bft)/\nabla_A$ (see \ref{nume:operaciones-con-substitutions}) and 
$f(a) \equiv a \!\mod \ft^A_0(\bfs)/\Delta_A$ for all $a\in A$, we deduce that
$\varphi(f(a)) \equiv \varphi(a) \!\mod \ft^A_0(\bft)/\nabla_A$ for all $a\in A$, but $\varphi$ is an $A$-algebra map and $\varphi(a) = a$. So $\varphi_*(f)\in\Hom_{k-\text{\rm alg}}^\pcirc(A,A[[\bft]]_\nabla)$.
\end{proof}

As a consequence of the above proposition and diagram (\ref{eq:diag-funda}) we have a commutative diagram:
\begin{equation} \label{eq:diag-funda-HS}
\begin{CD}
\Hom_{k-\text{\rm alg}}^\pcirc(A,A[[\bfs]]_\Delta)  @<{\sim}<{\Phi_D \mapsfrom D}<
\HS^\bfs_k(A;\Delta) @>{\sim}>> \Aut_{k[[\bfs]]_\Delta-\text{\rm alg}}^\pcirc(A[[\bfs]]_\Delta)\\
@V{\varphi_*}VV @V{\varphi_R}VV @VV{\overline{\varphi_*}}V\\
\Hom_{k-\text{\rm alg}}^\pcirc(A,A[[\bft]]_\nabla)  @<{\sim}<{\Phi_D \mapsfrom D}<
\HS^\bft_k(A;\nabla) @>{\sim}>> \Aut_{k[[\bft]]_\nabla-\text{\rm alg}}^\pcirc(A[[\bft]]_\nabla).
\end{CD}
\end{equation}
The inclusion 2) in Proposition  \ref{prop:equiv-action-subs-HS} can be rephrased by saying that for any substitution map $\varphi:A[[\bfs]]_\Delta \to A[[\bft]]_\nabla$ and for any HS-derivation $D\in \HS^\bfs_k(A;\Delta)$ we have $\varphi \sbullet D\in \HS^\bft_k(A;\nabla)$ (see \ref{nume:def-sbullet}). Moreover
$ \Phi_{\varphi \sbullet D} = \varphi \pcirc \Phi_D$.
\medskip

It is clear that for any co-ideals $\Delta' \subset \Delta$ and $\nabla' \subset \nabla$ with $\varphi \left( \Delta'_A/\Delta_A \right) \subset \nabla'_A/\nabla_A$
we have
\begin{equation} \label{eq:trunca_bullet}
\tau_{\nabla \nabla'}(\varphi \sbullet D) = \varphi' \sbullet \tau_{\Delta \Delta'}(D),
\end{equation}
where $\varphi': A[[\bfs]]_{\Delta'} \to A[[\bft]]_{\nabla'}$ is the substitution map induced by $\varphi$.
\medskip

Let us notice that any  $a\in A^\bfs$ gives rise to a substitution map $\varphi: A[[\bfs]]_\Delta  \to A[[\bfs]]_\Delta$ given by $\varphi(s)= a_s s$ for all $s\in\bfs$, and one has $a \sbullet D = \varphi \sbullet D$.
\bigskip

\numero
\label{nume:def-sbullet-HS} Let $\varphi\in\Sub_A(\bfs,\bft;\nabla,\Delta)$, $\psi\in\Sub_A(\bft,\bfu;\Delta,\Omega)$ be substitution maps and $D,D'\in\HS_k^\bfs(A;\nabla)$ HS-derivations. From \ref{nume:def-sbullet} we deduce the following properties:\medskip

\noindent -) If we denote $E:= \varphi \sbullet D \in\HS_k^\bft(A;\Delta)$,  we have
\begin{equation} \label{eq:expression_phi_D} 
E_0=\Id,\quad  E_e = \sum_{\substack{\scriptstyle \alpha\in\nabla\\ \scriptstyle |\alpha|\leq |e| }} {\bf C}_e(\varphi,\alpha) D_\alpha,\quad \forall e\in \Delta.
\end{equation}
-) If $\varphi$ has \underline{constant coefficients}, then $\varphi \sbullet (D\pcirc D') = (\varphi \sbullet D) \pcirc (\varphi \sbullet D')$. The general case will be treated in Proposition \ref{prop:varphi-D-main}.
\medskip

\noindent 
-) If $\varphi = \mathbf{0}$ is the trivial substitution map or if $D=\mathbb{I}$, then 
$\varphi \sbullet D = \mathbb{I}$.
\medskip

\noindent 
-) $ \psi \sbullet (\varphi \sbullet D) = (\psi \pcirc \varphi) \sbullet D$.

\begin{remark} We recall that a HS-derivation $D\in\HS_k(A)$ is called {\em iterative} \index{iterative (HS-derivation)} (see \cite[pg. 209]{mat_86}) if
$$  D_i \pcirc D_j = \binom{i+j}{i} D_{i+j}\quad \forall i,j\geq 0.
$$
This notion makes sense for $\bfs$-variate HS-derivations of any length. Actually, iterativity may be understood through the action of substitution maps. Namely, if we denote by $\iota,\iota': s \hookrightarrow s \sqcup s$ the two canonical inclusions and $\iota+\iota': A[[\bfs]] \to A[[\bfs \sqcup \bfs]]$ is the substitution map determined by
$$ (\iota+\iota') (s) = \iota(s) + \iota'(s),\quad \forall s\in \bfs,
$$
then a HS-derivation $D\in \HS^\bfs_k(A)$ is iterative if and only if 
$$ (\iota+\iota')\sbullet D = (\iota\sbullet D) \pcirc (\iota' \sbullet D).
$$
A similar remark applies for any formal group law instead of $\iota+\iota'$ (cf. \cite{hoff_kow_2015}).
\end{remark}

\begin{proposition} \label{prop:varphi-D-main}
Let $\varphi:A[[\bfs]]_\nabla  \to A[[\bft]]_\Delta$ be a substitution map. Then, 
the following assertions hold:
\begin{enumerate}
\item[(i)] For each $D\in\HS_k^\bfs(A;\nabla)$ there is a unique substitution map
$\varphi^D: A[[\bfs]]_\nabla  \to A[[\bft]]_\Delta$ such that 
$\left(\widetilde{\varphi \sbullet D}\right) \pcirc \varphi^D =  \varphi \pcirc \widetilde{D}$. 
Moreover, $\left(\varphi\sbullet D\right)^* = \varphi^D \sbullet D^*$ and $\varphi^{\mathbb{I}} = \varphi$.
\item[(ii)] For each $D,E\in\HS_k^\bfs(A;\nabla)$, we have $\varphi \sbullet (D \pcirc E) = (\varphi \sbullet D) \pcirc (\varphi^D \sbullet E)$ and 
 $\left(\varphi^D\right)^E = \varphi^{D \pcirc E}$. 
 In particular, $\left(\varphi^D\right)^{D^*} = \varphi$. 
\item[(iii)] If $\psi$ is another composable substitution map, then $(\varphi \pcirc \psi)^D = \varphi^{\psi\sbullet D} \pcirc \psi^D$.
\item[(iv)] $\tau_{n}(\varphi^D) = \tau_{n}(\varphi)^{\tau_{n}(D)}$, for all $n\geq 1$.
\item[(v)] If $\varphi$ has constant coefficients then $\varphi^D = \varphi$.
\end{enumerate}
\end{proposition}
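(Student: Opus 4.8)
The plan is to leverage the fundamental isomorphisms (\ref{eq:HS-funda}) and the commutative diagram (\ref{eq:diag-funda-HS}), which translate everything into statements about $k$-algebra maps $A \to A[[\bft]]_\Delta$ and continuous automorphisms of power series rings. For part (i): given $D\in\HS_k^\bfs(A;\nabla)$, the automorphism $\widetilde D$ of $A[[\bfs]]_\nabla$ is a bi-continuous $k[[\bfs]]_\nabla$-algebra automorphism (Proposition \ref{prop:caracter_HS}), and similarly $\widetilde{\varphi\sbullet D}$ is a bi-continuous $k[[\bft]]_\Delta$-algebra automorphism of $A[[\bft]]_\Delta$. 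So I would simply \emph{define} $\varphi^D$ to be the composite $\left(\widetilde{\varphi\sbullet D}\right)^{-1}\pcirc \varphi\pcirc \widetilde D$, viewed as a map $A[[\bfs]]_\nabla\to A[[\bft]]_\Delta$. The content is then to check this composite is a substitution map: it is manifestly continuous and an $A$-algebra map (each factor is; for $\varphi$ this uses $\varphi$ is $A$-algebra, for the $\widetilde{\,\cdot\,}$'s that they fix $A$ up to the augmentation, but more carefully we need it to be $A$-linear on the nose — this follows since on $a\in A\subset A[[\bfs]]_\nabla$ we have $\widetilde D(a)=\Phi_D(a)\equiv a$ and $\varphi(a)=a$ and $\widetilde{\varphi\sbullet D}^{-1}(a) = a$, because $\varphi\sbullet D$ restricted to $A$ is $\Phi_{\varphi\sbullet D}$ which reduces to the identity mod the augmentation and whose inverse does too — actually cleaner: $\widetilde{\varphi\sbullet D}$ is $k[[\bft]]_\Delta$-linear hence $A$-linear, wait $A$ maps into $A[[\bft]]_\Delta$ not $k[[\bft]]_\Delta$; so I must argue $\widetilde{\varphi\sbullet D}(a)=a$ for $a\in A$, which holds since $\Phi_{\varphi\sbullet D}=\varphi\pcirc\Phi_D$ and $\Phi_D(a)\equiv a$, $\varphi(a)=a$... this needs a short computation but is routine). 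Finally one checks $\varphi^D(t)\in\fn^A_0(\bft)/\Delta_A$ and the finiteness condition (\ref{eq:cond-fini-c}); both reduce to the corresponding properties of $\varphi$ since the $\widetilde{\,\cdot\,}$ factors preserve the augmentation ideal and induce the identity on the associated graded in degree giving the leading term.

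For the identity $\left(\varphi\sbullet D\right)^*=\varphi^D\sbullet D^*$: translate via $\Phi$. We have $\Phi_{\varphi^D\sbullet D^*}=\varphi^D\pcirc\Phi_{D^*}$, and $\widetilde{\varphi^D\sbullet D^*}=\widetilde{\varphi^D\sbullet D}{}^{\,?}$... better to work with the automorphisms: I want to show $\widetilde{(\varphi\sbullet D)^*}=\widetilde{\varphi^D\sbullet D^*}$, equivalently (since $\widetilde{\,\cdot\,}$ is a group anti-homomorphism or homomorphism depending on convention — it is a homomorphism onto $\Aut^\pcirc$) that $\widetilde{\varphi\sbullet D}^{-1}=\widetilde{\varphi^D\sbullet D^*}$. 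Now $\varphi^D\sbullet D^*$ corresponds to $\varphi^D\pcirc\Phi_{D^*}$; composing I would use the defining relation of $\varphi^D$ and the fact that $\widetilde{D}^{-1}=\widetilde{D^*}$ and $\varphi\sbullet D^* $ ... the cleanest route: apply the defining relation for $\varphi^{D}$, namely $\widetilde{\varphi\sbullet D}\pcirc\varphi^D=\varphi\pcirc\widetilde D$, then substitute and simplify using $\widetilde{D}\pcirc\widetilde{D^*}=\Id$. The claim $\varphi^{\mathbb I}=\varphi$ is immediate since $\widetilde{\mathbb I}=\Id$ and $\varphi\sbullet\mathbb I=\mathbb I$.

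Parts (ii)–(iv) are formal consequences once (i) is in place. For (ii): write out $\widetilde{\varphi\sbullet(D\pcirc E)}=\widetilde{\varphi\sbullet D}\pcirc\widetilde{\varphi^D\sbullet E}$ — wait, that is what we want to prove; instead start from the defining relations. We have $\Phi_{\varphi\sbullet(D\pcirc E)}=\varphi\pcirc\Phi_{D\pcirc E}=\varphi\pcirc\widetilde D\pcirc\Phi_E$ (using $\Phi_{D\pcirc E}=\widetilde D\pcirc\Phi_E$, stated after (\ref{eq:HS-iso-Hom(A,A[[s]])})) $=\widetilde{\varphi\sbullet D}\pcirc\varphi^D\pcirc\Phi_E=\widetilde{\varphi\sbullet D}\pcirc\Phi_{\varphi^D\sbullet E}=\Phi_{(\varphi\sbullet D)\pcirc(\varphi^D\sbullet E)}$, whence equality by injectivity of $D\mapsto\Phi_D$. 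For $\left(\varphi^D\right)^E=\varphi^{D\pcirc E}$: use the defining relation twice — $\widetilde{\varphi^D\sbullet E}\pcirc(\varphi^D)^E=\varphi^D\pcirc\widetilde E$ and $\widetilde{\varphi\sbullet D}\pcirc\varphi^D=\varphi\pcirc\widetilde D$ — and combine with the relation just proved $\widetilde{\varphi\sbullet D}\pcirc\widetilde{\varphi^D\sbullet E}=\widetilde{\varphi\sbullet(D\pcirc E)}$ to get $\widetilde{\varphi\sbullet(D\pcirc E)}\pcirc(\varphi^D)^E=\varphi\pcirc\widetilde D\pcirc\widetilde E=\varphi\pcirc\widetilde{D\pcirc E}$, which is exactly the defining relation characterizing $\varphi^{D\pcirc E}$; uniqueness finishes it. Setting $E=D^*$ gives $(\varphi^D)^{D^*}=\varphi^{\mathbb I}=\varphi$. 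Part (iii) is the same game with the extra substitution $\psi$: $\widetilde{(\varphi\pcirc\psi)\sbullet D}=\widetilde{\varphi\sbullet(\psi\sbullet D)}$ (by $\psi\sbullet(\varphi\sbullet\cdot)$-type associativity, \ref{nume:def-sbullet-HS}), then expand both sides of the defining relation. Part (iv) follows from (\ref{eq:trunca_bullet}) and the compatibility of $\tau_n$ with $\widetilde{\,\cdot\,}$ and composition, applying $\tau_n$ to the defining equation and invoking uniqueness at level $\Delta^n$.

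For (v): if $\varphi$ has constant coefficients then by \ref{nume:def-sbullet-HS} $\varphi\sbullet(D\pcirc E)=(\varphi\sbullet D)\pcirc(\varphi\sbullet E)$ for all $D,E$; taking $E$ arbitrary this forces (by uniqueness in (ii), comparing with $\varphi\sbullet(D\pcirc E)=(\varphi\sbullet D)\pcirc(\varphi^D\sbullet E)$, and using that $\HS_k^\bfs(A;\nabla)$ separates — or more directly via the defining relation: $\varphi$ with constant coefficients gives $\varphi_R=\sideset{_R}{}\opvarphi$ a ring homomorphism over $\varphi$, so $\varphi$ commutes appropriately with $\widetilde D$) $\varphi^D=\varphi$. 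The most direct argument: from Lemma \ref{lemma:const_coeff_subst_map} (with $E=A$), $\langle\varphi\sbullet D,\varphi_A(a)\rangle=\varphi_A(\langle D,a\rangle)$, i.e. $\widetilde{\varphi\sbullet D}\pcirc\varphi_A=\varphi_A\pcirc\widetilde D$ on $A$, hence on all of $A[[\bfs]]_\nabla$ by continuity and $k[[\bft]]_\Delta$-linearity; but $\varphi_A=\varphi$ here (it is an $A$-algebra, so $\varphi_A=\varphi$ as a map on the underlying power series ring — indeed for $A$ viewed as the trivial bimodule over itself $\varphi_A=\varphi$), so $\widetilde{\varphi\sbullet D}\pcirc\varphi=\varphi\pcirc\widetilde D$, which is precisely the defining relation with $\varphi^D=\varphi$, and uniqueness gives the claim.

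I expect the main obstacle to be purely bookkeeping: verifying carefully that the composite defining $\varphi^D$ really lands in $\Sub_A(\bft,\bfu;\nabla,\Delta)$ — in particular the $A$-algebra property (needs $\widetilde{\varphi\sbullet D}$ and $\widetilde D$ to fix $A$ pointwise, which is true but must be spelled out) and the finiteness condition (\ref{eq:cond-fini-c}), and then being consistent about whether $D\mapsto\widetilde D$ is a group homomorphism or anti-homomorphism when manipulating inverses. Once the defining relation and these checks are set up, parts (ii)–(v) are formal diagram chases with no real difficulty.
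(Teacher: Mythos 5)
Your proposal follows essentially the same route as the paper: define $\varphi^D := \bigl(\widetilde{\varphi\sbullet D}\bigr)^{-1}\pcirc\varphi\pcirc\widetilde{D}$, verify that this composite is a substitution map (continuity, $A$-linearity, the finiteness condition (\ref{eq:cond-fini-c}) via $\varphi^D(s)=\widetilde{(\varphi\sbullet D)^*}(\varphi(s))$), and then obtain (ii)--(v) formally from the defining relation and its uniqueness, with (v) via Lemma \ref{lemma:const_coeff_subst_map} -- exactly as in the paper. One small correction: for $A$-linearity the claim you need is not $\widetilde{\varphi\sbullet D}(a)=a$ (false in general, since $\widetilde{\varphi\sbullet D}(a)=\Phi_{\varphi\sbullet D}(a)$ is only congruent to $a$ modulo the augmentation ideal) but rather $\varphi(\widetilde{D}(a))=\widetilde{\varphi\sbullet D}(a)$, which is precisely the identity $\varphi\pcirc\Phi_D=\Phi_{\varphi\sbullet D}$ that you already cite; applying $\bigl(\widetilde{\varphi\sbullet D}\bigr)^{-1}$ then gives $\varphi^D(a)=a$.
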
 

\begin{proof} (i) We know that 
$$\widetilde{D}\in \Aut_{k[[\bfs]]_\nabla-\text{\rm alg}}^\pcirc(A[[\bfs]]_\nabla)\quad \text{and}\quad
\widetilde{\varphi \sbullet D}\in 
\Aut_{k[[\bft]]_\Delta-\text{\rm alg}}^\pcirc(A[[\bft]]_\Delta).
$$ 
The only thing to prove is that 
$$\varphi^D := 
\left(\widetilde{\varphi \sbullet D}\right)^{-1} \pcirc \varphi \pcirc \widetilde{D}
$$ 
is a substitution map $A[[\bfs]]_\nabla  \to A[[\bft]]_\Delta$ (see Definition \ref{def:substitution_maps}). 
Let start by proving that $\varphi^D$ is an $A$-algebra map. Let us write $E=\varphi \sbullet D$. For each $a\in A$ we have
\begin{eqnarray*}
& \varphi^D (a) = \widetilde{E}^{-1} \left( \varphi \left( \widetilde{D}(a)\right)\right) = \widetilde{E}^{-1} \left( \varphi \left( \Phi_D(a)\right)\right) = 
&\\
&
\widetilde{E}^{-1} \left( (\varphi \pcirc \Phi_D)(a))\right) = 
\widetilde{E}^{-1} \left( \Phi_{\varphi \sbullet D}(a)\right) = \widetilde{E}^{-1} \left( \left(\widetilde{\varphi \sbullet D}\right)(a)\right) = a,
\end{eqnarray*}
and so $\varphi^D$ is $A$-linear. The continuity of $\varphi^D$ is clear, since it is the composition of continuous maps. For each $s\in\bfs$, let us write
$$ \varphi(s) = \sum_{\substack{\scriptscriptstyle \beta \in \Delta\\ \scriptscriptstyle |\beta| > 0}}
c^s_\beta \bft^\beta.
$$
Since $\varphi$ is a substitution map, property (\ref{eq:cond-fini-c}) holds:
$$
 \#\{s\in \bfs\ |\ c^s_\beta \neq 0\} < \infty\quad\quad \text{for all\ }\ \beta \in \Delta.
$$
We have
$$ \varphi^D (s) = \widetilde{E^*} \left( \varphi ( \widetilde{D}(s))\right) =  
\widetilde{E^*} \left( \varphi (s)\right) = \sum_{\scriptscriptstyle \beta \in \Delta} 
\left( \sum_{\scriptscriptstyle \alpha + \gamma=\beta} E^*_\alpha (c^s_\gamma) \right) \bft^\beta =
\sum_{\scriptscriptstyle \beta \in \Delta}  d^s_\beta \bft^\beta
$$
with
$ d^s_\beta = \sum_{\scriptscriptstyle \alpha + \gamma=\beta} E^*_\alpha (c^s_\gamma)$.
So, for each $\beta \in \Delta$ we have
$$ \{s\in \bfs\ |\ c^s_\beta \neq 0\} \subset \bigcup_{\gamma \leq \beta} \{s\in \bfs\ |\ c^s_\gamma \neq 0\}
$$
and $\varphi^D$ satisfies property (\ref{eq:cond-fini-c}) too. We conclude that $\varphi^D$ is a substitution map, and obviously it is the only one such that  $\left(\widetilde{\varphi \sbullet D}\right) \pcirc \varphi^D =  \varphi \pcirc \widetilde{D}$. From there, we have 
$$\varphi^D \pcirc \widetilde{D^*} = \varphi^D \pcirc \widetilde{D}^{-1} = \left(\widetilde{\varphi \sbullet D}\right)^{-1} \pcirc \varphi = \widetilde{\left(\varphi \sbullet D\right)^*} \pcirc \varphi,
$$
and taking restrictions to $A$ we obtain
$ \varphi^D \pcirc \Phi_{D^*} = \Phi_{\left(\varphi \sbullet D\right)^*}
$ and so $\varphi^D \sbullet D^* = \left(\varphi\sbullet D\right)^*$.
\medskip

On the other hand, it is clear that if $D=\mathbb{I}$, then $\varphi^\mathbb{I} = \varphi$ and if $\varphi=\mathbf{0}$, $\mathbf{0}^D=\mathbf{0}$.
\bigskip

\noindent (ii) In order to prove the first equality, we need to prove the equality
$ \widetilde{\varphi \sbullet (D \pcirc E)} = \left(\widetilde{\varphi \sbullet D}\right) \pcirc \left(\widetilde{\varphi^D \sbullet E}\right)
$. For this it is enough to prove the equality after restriction to $A$, but
$$ \left(\widetilde{\varphi \sbullet (D \pcirc E)}\right)|_A = \Phi_{\varphi \sbullet (D \pcirc E)} = \varphi \pcirc \Phi_{D \pcirc E} = \varphi \pcirc \widetilde{D} \pcirc \Phi_E,
$$
$$ \left( \left(\widetilde{\varphi \sbullet D}\right) \pcirc \left(\widetilde{\varphi^D \sbullet E}\right) \right)|_A = 
\left(\widetilde{\varphi \sbullet D}\right) \pcirc \Phi_{\varphi^D \sbullet E} =
\left(\widetilde{\varphi \sbullet D}\right) \pcirc \varphi^D \pcirc \Phi_E
$$
and both are equal by (i). For the second equality, we have $\left(\varphi^D\right)^{D^*} = \varphi^{\mathbb{I}}= \varphi$.
\bigskip

\noindent (iii) Since 
\begin{eqnarray*}
& \widetilde{\left((\varphi \pcirc \psi) \sbullet D\right)} \pcirc
\left( \varphi^{\psi\sbullet D} \pcirc \psi^D \right) = \widetilde{\left(\varphi \sbullet (\psi \sbullet D)\right)} \pcirc
 \varphi^{\psi\sbullet D} \pcirc \psi^D = 
&\\
&
\varphi \pcirc \left(\widetilde{\psi\sbullet D}\right) \pcirc \psi^D =
 \varphi \pcirc \psi \pcirc  \widetilde{D},
\end{eqnarray*}
we deduce that $(\varphi \pcirc \psi)^D = \varphi^{\psi\sbullet D} \pcirc \psi^D$ from the uniqueness in (i).
\bigskip

\noindent Part (iv) is also a consequence of the uniqueness property in (i).
\bigskip

\noindent (v) Let us assume that $\varphi$ has constant coefficients. We know from Lemma \ref{lemma:const_coeff_subst_map} that $ \langle \varphi \sbullet D, \varphi(a) \rangle = \varphi \left( \langle D, a\rangle \right)$ for all $a\in A[[\bfs]]_\nabla$, and so 
$\left(\widetilde{\varphi \sbullet D}\right) \pcirc \varphi =  \varphi \pcirc \widetilde{D}$. Hence, by the uniqueness property in (i) we deduce that $\varphi^D = \varphi$.
\end{proof}

The following proposition gives a recursive formula to obtain $\varphi^D$ from $\varphi$. 

\begin{proposition} \label{prop:varphi-D} 
With the notations of Proposition \ref{prop:varphi-D-main}, we have
$$ {\bf C}_e(\varphi,f+\nu) = \sum_{\substack{\scriptscriptstyle\beta+\gamma=e\\ 
\scriptscriptstyle |f+g|\leq |\beta|,|\nu|\leq |\gamma| }} {\bf C}_\beta(\varphi,f+g) D_g({\bf C}_\gamma(\varphi^D,\nu)) 
$$
for all $e\in \Delta$ and for all $f,\nu\in\nabla$ with $|f+\nu|\leq |e|$.
In particular, we have
the following recursive formula
$$ {\bf C}_e(\varphi^D,\nu) := {\bf C}_e(\varphi,\nu) - \sum_{\substack{\scriptscriptstyle\beta+\gamma=e\\ 
\scriptscriptstyle |g|\leq |\beta|, |\nu|\leq |\gamma| < |e| }} {\bf C}_\beta(\varphi,g) D_g({\bf C}_\gamma(\varphi^D,\nu)).
$$
for $e\in \Delta$, $\nu\in\nabla$ with $|e|\geq 1$ and $|\nu|\leq |e|$, starting with ${\bf C}_0(\varphi^D,0) = 1$.
\end{proposition}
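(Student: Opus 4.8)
The starting point is the defining relation from Proposition \ref{prop:varphi-D-main}(i), namely $\bigl(\widetilde{\varphi\sbullet D}\bigr)\pcirc\varphi^D=\varphi\pcirc\widetilde{D}$. The plan is to evaluate both sides of this equality of continuous $k$-algebra maps on a monomial $\bfs^{f+\nu}$ with $f,\nu\in\nabla$ (using the canonical $A$-linear scission to lift to $A[[\bfs]]$), read off the coefficient of $\bft^e$ on each side, and identify the two expressions. Since $\varphi^D$ is a substitution map and both $\widetilde{\varphi\sbullet D}$ and $\widetilde{D}$ are $k[[\bft]]_\Delta$-algebra (resp. $k[[\bfs]]_\nabla$-algebra) automorphisms extending their restrictions to $A$, it will be cleaner to first apply everything to $\bfs^f\cdot\bfs^\nu$ and use multiplicativity, reducing to understanding the action on $\bfs^\nu\in A[[\bfs]]_\nabla$ and on the ``scalar'' part $\bfs^f$.

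First I would compute the right-hand side: $\varphi\pcirc\widetilde{D}$ applied to $\bfs^{f+\nu}$. Using $\widetilde{D}(\bfs^{f+\nu})=\sum_{g}D_g(\bfs^{f+\nu})\bfs^{?}$... more precisely, writing $\bfs^{f+\nu}$ as the product of the unit $1$-part, I would use $\widetilde D(a\,\bfs^\alpha)=\widetilde D(a)\bfs^\alpha$ for $a\in A$ together with the expansion of $\varphi$ on monomials via the ${\bf C}_e(\varphi,-)$ coefficients from \eqref{eq:exp-substi}. The key combinatorial input is that $\widetilde D$ acts by $\widetilde D(\bfs^\alpha)=\sum_{g+h=\alpha}D_g(1)\bfs^h$-type expansions, but since $D_g(1)=0$ for $g\neq0$, one gets the interaction between the $D_g$ and the ${\bf C}$'s only after composing with $\varphi$ which sends $\bfs^{f+g}\mapsto\sum_\beta{\bf C}_\beta(\varphi,f+g)\bft^\beta$. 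For the left-hand side, $\bigl(\widetilde{\varphi\sbullet D}\bigr)\pcirc\varphi^D$ applied to $\bfs^{f+\nu}$: apply $\varphi^D$ first, getting $\sum_{\beta,\gamma}{\bf C}_\beta(\varphi^D,f){\bf C}_\gamma(\varphi^D,\nu)\bft^{\beta+\gamma}$ by the tensor/product behaviour of ${\bf C}$ (splitting $[f+\nu]$), then apply $\widetilde{\varphi\sbullet D}$, which is a $k[[\bft]]_\Delta$-algebra map fixing $\bft^\beta$ and acting on coefficients in $A$ by $\Phi_{\varphi\sbullet D}=\varphi\pcirc\Phi_D$; so the $A$-coefficient ${\bf C}_\gamma(\varphi^D,\nu)$ gets hit by $\varphi\pcirc\Phi_D$, producing $D_g(-)$ terms composed with $\varphi$.

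Matching coefficients of a fixed $\bft^e$ on both sides, and using ${\bf C}_\beta(\varphi^D,f)$-identities recursively (the $|f|=0$ case giving ${\bf C}_0(\varphi^D,0)=1$), yields the asserted identity
$$ {\bf C}_e(\varphi,f+\nu) = \sum_{\substack{\scriptscriptstyle\beta+\gamma=e\\ \scriptscriptstyle |f+g|\leq|\beta|,\,|\nu|\leq|\gamma|}} {\bf C}_\beta(\varphi,f+g)\, D_g({\bf C}_\gamma(\varphi^D,\nu)). $$
The recursive formula then follows by isolating, on the right-hand side, the term with $\beta+\gamma=e$, $g=0$, $\gamma=e$ (so $\beta=0$, ${\bf C}_0(\varphi,0)=1$, $D_0=\Id$), which contributes exactly ${\bf C}_e(\varphi^D,\nu)$; all remaining terms have $|\gamma|<|e|$, so they involve only lower-order ${\bf C}_\gamma(\varphi^D,\nu)$ already determined, and transposing gives the stated recursion, with the base case ${\bf C}_0(\varphi^D,0)=1$ coming from Proposition \ref{prop:varphi-D-main}(i).

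\textbf{Main obstacle.} The delicate point is the bookkeeping of index sets when splitting $\bfs^{f+\nu}$: one must use the bijection $[f+\nu]\leftrightarrow[f]\sqcup[\nu]$ (as in the proof of Proposition \ref{prop:identidad-Cs}) compatibly with the further splitting $f\rightsquigarrow f+g$ coming from $\widetilde D$, and ensure the norm constraints ($|f+g|\leq|\beta|$, $|\nu|\leq|\gamma|$, $\beta+\gamma=e$) are exactly those that survive. In particular, care is needed because $\widetilde D$ does not act diagonally on $\bfs^\nu$—it mixes $\nu$ with a ``derivative index'' $g$—so the clean multiplicativity only appears after composing with $\varphi$; keeping the $D_g$ outside the $\varphi$ on the correct side (right-hand side has $\varphi$ outside $D_g$, i.e.\ $\varphi(D_g(\cdots))$, whereas after rewriting via $\varphi^D$ the $D_g$ lands inside acting on ${\bf C}_\gamma(\varphi^D,\nu)$) is where the asymmetry of the formula originates and is the step most prone to sign/placement errors. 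Once the monomial computation is set up correctly, everything else is formal.
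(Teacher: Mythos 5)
Your proposal is correct and follows essentially the same route as the paper: evaluate the defining relation $(\widetilde{\varphi\sbullet D})\pcirc\varphi^D=\varphi\pcirc\widetilde D$ on monomials, match coefficients of $\bft^e$, and use Proposition \ref{prop:identidad-Cs} for general $f$, with the recursion obtained by isolating the $\beta=0$, $g=0$ term exactly as you describe. The one caveat is that evaluating directly on $\bfs^{f+\nu}$ makes factors ${\bf C}_{\gamma'}(\varphi^D,f)$ appear (which must then be converted back via Leibniz and the $f=0$ identity), so it is cleaner to first prove the $f=0$ case on $\bfs^\nu$ --- where $\widetilde D(\bfs^\nu)=\bfs^\nu$ outright, with no $D_g(1)$ expansion needed --- and then fold in $f$ using only the multiplicativity of ${\bf C}_\bullet(\varphi,-)$, which is what the paper does.
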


\begin{proof} First, the case $f=0$ easily comes from the equality
$$  
\sum_{\substack{\scriptscriptstyle  e \in\Delta  \\ 
\scriptscriptstyle |\nu|\leq |e|}} {\bf C}_e(\varphi,\nu) \bft^e = \varphi (\bfs^\nu) = (\varphi \pcirc \widetilde{D})(\bfs^\nu) = 
\left(\left(\widetilde{\varphi \sbullet D}\right) \pcirc \varphi^D\right) (\bfs^\nu) \quad \forall \nu\in\nabla.
$$
For arbitrary $f$ one has to use Proposition \ref{prop:identidad-Cs}. Details are left to the reader.
\end{proof}

The proof of the following corollary is a consequence of Lemma \ref{lema:dual-leibniz-HS}.

\begin{corollary} \label{cor:aux-phi-D} Under the hypotheses of Proposition \ref{prop:varphi-D-main}, the following identity holds for each $e\in\Delta$
$$ \left(\varphi\sbullet D\right)^*_e =  
\sum_{ |\mu+\nu|\leq |e| } D^*_\mu \cdot D_\nu \left( {\bf C}_e(\varphi^D,\mu+\nu) \right).
$$
\end{corollary}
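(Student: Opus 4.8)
The plan is to start from the defining relation of $\varphi^D$ in Proposition \ref{prop:varphi-D-main}(i), namely $\bigl(\widetilde{\varphi \sbullet D}\bigr) \pcirc \varphi^D = \varphi \pcirc \widetilde{D}$, and combine it with the explicit formula $(\varphi\sbullet D)^* = \varphi^D \sbullet D^*$ from the same proposition. First I would write, for a fixed $e\in\Delta$,
$$
(\varphi\sbullet D)^*_e = \bigl(\varphi^D \sbullet D^*\bigr)_e = \sum_{\substack{\scriptscriptstyle \mu\in\nabla\\ \scriptscriptstyle |\mu|\leq |e|}} {\bf C}_e(\varphi^D,\mu)\, D^*_\mu,
$$
using the explicit expression (\ref{eq:expression_phi_D}) for the action of a substitution map on a HS-derivation. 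So it remains to transform $\sum_\mu {\bf C}_e(\varphi^D,\mu) D^*_\mu$ into the claimed form $\sum_{|\mu+\nu|\leq|e|} D^*_\mu \cdot D_\nu\bigl({\bf C}_e(\varphi^D,\mu+\nu)\bigr)$.

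The key step is to apply the dual Leibniz identity of Lemma \ref{lema:dual-leibniz-HS} to each coefficient ${\bf C}_e(\varphi^D,\mu)\in A$, which gives $x D_\alpha = \sum_{\beta+\gamma=\alpha} D_\beta\, D^*_\gamma(x)$ for any $x\in A$; but here I want it in the ``opposite'' form. Since $\varphi^D \sbullet D^*$ is the right action of the substitution map on $D^*$ and since HS-derivations satisfy $D a = \widetilde{D}(a)\, D$ (Proposition \ref{prop:caracter_HS}), the natural move is: rewrite ${\bf C}_e(\varphi^D,\mu)\, D^*_\mu$ by pushing the scalar past $D^*$. Concretely, $D^*$ is itself a HS-derivation (Corollary after Proposition \ref{prop:caracter_HS}), so by Lemma \ref{lema:dual-leibniz-HS} applied to $D^*$ (whose inverse is $D$): for each $x\in A$ and each $\mu$,
$$
x\, D^*_\mu = \sum_{\scriptscriptstyle \beta+\gamma=\mu} D^*_\beta\, D_\gamma(x).
$$
Summing over $\mu$ with the coefficients $x = {\bf C}_e(\varphi^D,\mu)$ reindexed via $\mu = \beta+\gamma$ yields $\sum_{|\beta+\gamma|\leq|e|} D^*_\beta\, D_\gamma\bigl({\bf C}_e(\varphi^D,\beta+\gamma)\bigr)$, which is exactly the right-hand side after renaming $\beta\mapsto\mu$, $\gamma\mapsto\nu$.

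The main obstacle I anticipate is getting the order of the arguments correct: the formula in the corollary has $D^*_\mu \cdot D_\nu(\cdots)$, i.e. the scalar ${\bf C}_e(\varphi^D,\mu+\nu)$ sits \emph{inside} $D_\nu$ and multiplies $D^*_\mu$ on the \emph{right}, so one must be careful whether to invoke Lemma \ref{lema:dual-leibniz-HS} for $D$ or for $D^*$, and in which of the two equivalent forms (the stated $x D_\alpha = \sum D_\beta D^*_\gamma(x)$ versus the version obtained by applying it to the inverse). A secondary point is verifying that all index ranges match up — in particular that the summation condition $|\mu+\nu|\leq|e|$ on the right is automatically equivalent, after the reindexing, to $|\mu|\leq|e|$ together with $\mu=\beta+\gamma$, $|\gamma|$ unconstrained beyond $|\beta+\gamma|\leq|e|$; this is where one should double-check that no terms with ${\bf C}_e(\varphi^D,\cdot)$ having norm exceeding $|e|$ sneak in, which they cannot since ${\bf C}_e(\varphi^D,\alpha)=0$ for $|\alpha|>|e|$ by construction. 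Once the bookkeeping is pinned down, the computation is a one-line application of Lemma \ref{lema:dual-leibniz-HS}, consistent with the paper's remark that the proof ``is a consequence of Lemma \ref{lema:dual-leibniz-HS}.''
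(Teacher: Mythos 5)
Your proof is correct and follows exactly the route the paper intends: expand $(\varphi\sbullet D)^*_e=(\varphi^D\sbullet D^*)_e=\sum_{|\mu|\leq|e|}{\bf C}_e(\varphi^D,\mu)\,D^*_\mu$ via Proposition \ref{prop:varphi-D-main}(i) and (\ref{eq:expression_phi_D}), then apply Lemma \ref{lema:dual-leibniz-HS} to the HS-derivation $D^*$ (whose inverse is $D$) to move each scalar ${\bf C}_e(\varphi^D,\mu)$ to the right, and reindex over $\mu=\beta+\gamma$. The paper gives no further detail beyond citing Lemma \ref{lema:dual-leibniz-HS}, and your bookkeeping of the bimodule sides and of the index range $|\mu+\nu|\leq|e|$ is accurate.
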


\begin{proposition} \label{prop:D-circ-varphi}
Let $D\in\HS^\bft_k(A;\Delta)$ be a HS-derivation and $\varphi:A[[\bfs]]_\nabla \to A[[\bft]]_\Delta$ a substitution map. Then, the following identity holds:
$$   \widetilde{D} \pcirc \varphi = \left(D(\varphi) \otimes \pi\right) \pcirc \left(\widetilde{\kappa \sbullet D}\right) \pcirc \iota,
$$
where:
\begin{itemize}
\item $D(\varphi): A[[\bfs]]_\nabla \to A[[\bft]]_\Delta$ is the substitution map determined by $D(\varphi)(s)= \widetilde{D}(\varphi(s))$ for all $s\in\bfs$.
\item $\pi:  A[[\bft]]_\Delta \rightarrow A$ is the augmentation, or equivalently, the substitution map\footnote{The map $\pi$ can be also understood as the truncation $\tau_{\Delta,\{0\}}: A[[\bft]]_\Delta \rightarrow A[[\bft]]_{\{0\}} =A $.} given by $\pi (t) = 0$ for all $t\in\bft$.
\item $\iota: A[[\bfs]]_\nabla \to A[[\bfs\sqcup \bft]]_{\nabla \times \Delta}$ and $\kappa: A[[\bft]]_\Delta \to A[[\bfs\sqcup \bft]]_{\nabla \times \Delta}$ are the combinatorial substitution maps determined by the inclusions  $\bfs \hookrightarrow \bfs\sqcup \bft$ and $\bft \hookrightarrow \bfs\sqcup \bft$, respectively.
\end{itemize}
\end{proposition}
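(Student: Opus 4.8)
The plan is to reduce the claimed identity to a check on topological generators. Both sides are continuous $k$-algebra homomorphisms $A[[\bfs]]_\nabla \to A[[\bft]]_\Delta$: on the left, $\varphi$ is a continuous $A$-algebra map and $\widetilde{D}$ is a continuous $k[[\bft]]_\Delta$-algebra automorphism of $A[[\bft]]_\Delta$ (Proposition~\ref{prop:caracter_HS}); on the right, $\iota$ and $\pi$ are (combinatorial) substitution maps, $D(\varphi)\otimes\pi$ is a substitution map by Definition~\ref{defi:tensor-prod-of-substi}, and $\widetilde{\kappa\sbullet D}$ is a continuous $k[[\bfs\sqcup\bft]]_{\nabla\times\Delta}$-algebra automorphism because $\kappa\sbullet D \in \HS^{\bfs\sqcup\bft}_k(A;\nabla\times\Delta)$ by Proposition~\ref{prop:equiv-action-subs-HS}(2). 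Since a continuous $k$-algebra map out of $A[[\bfs]]_\nabla$ is determined by its restriction to $A$ and by the images of the variables $\bfs^u$, $u\in\bfs$, it suffices to show the two sides agree on $A$ and on each $\bfs^u$. Before doing this one records that $D(\varphi)$ is genuinely a substitution map: each coefficient of $D(\varphi)(u)=\widetilde{D}(\varphi(u))$ is an $A$-linear combination of elements $D_\beta(c)$ with $c$ a coefficient of $\varphi(u)$, so the finiteness condition (\ref{eq:cond-fini-c}) for $D(\varphi)$ follows from the one for $\varphi$ (the $u$ with a nonzero coefficient of $\bft^\delta$ in $D(\varphi)(u)$ lie in $\bigcup_{\beta\leq\delta}\{u\mid c^u_\beta\neq 0\}$), so that $D(\varphi)\otimes\pi$ makes sense.

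On the variables the check is immediate: the left side sends $\bfs^u\mapsto\widetilde{D}(\varphi(\bfs^u))=\widetilde{D}(\varphi(u))=D(\varphi)(u)$ by the very definition of $D(\varphi)$; on the right, $\iota(\bfs^u)=\bfs^u$, then $\widetilde{\kappa\sbullet D}$ fixes $\bfs^u$ since $\bfs^u$ lies in the coefficient ring $k[[\bfs\sqcup\bft]]_{\nabla\times\Delta}$ over which $\widetilde{\kappa\sbullet D}$ is linear, and finally $(D(\varphi)\otimes\pi)(\bfs^u)=D(\varphi)(u)$ by the defining commutative square of the tensor product (Definition~\ref{defi:tensor-prod-of-substi}). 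For the restriction to $A$, fix $a\in A$. On the left, since $\varphi$ is an $A$-algebra map, $\varphi(a)=a$, and $\widetilde{D}(a)=\Phi_D(a)=\sum_{\alpha\in\Delta}D_\alpha(a)\bft^\alpha$, because $\Phi_D$ is the restriction of $\widetilde{D}$ to $A$ (see (\ref{eq:HS-funda})). On the right, $\iota(a)=a$ and, using $\Phi_{\kappa\sbullet D}=\kappa\pcirc\Phi_D$ (commutativity of (\ref{eq:diag-funda-HS})), $\widetilde{\kappa\sbullet D}(a)=\Phi_{\kappa\sbullet D}(a)=\sum_{\alpha\in\Delta}D_\alpha(a)\,\bft^\alpha$, where now the monomials $\bft^\alpha$ sit in the $\bft$-block of $A[[\bfs\sqcup\bft]]_{\nabla\times\Delta}$; applying $D(\varphi)\otimes\pi$ and unwinding the canonical identification of its codomain with $A[[\bft]]_\Delta$ carries this series to $\sum_{\alpha\in\Delta}D_\alpha(a)\bft^\alpha=\Phi_D(a)$, matching the left side. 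By the uniqueness just invoked, the two continuous $k$-algebra maps coincide.

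The one genuinely delicate point is the bookkeeping in this last computation: one must keep careful track of the identification $\N^{(\bfs\sqcup\bft)}=\N^{(\bfs)}\times\N^{(\bft)}$, of which $\bft$-block each monomial produced by $\widetilde{\kappa\sbullet D}$ lives in, and of the canonical isomorphisms of power-series rings underlying Definitions~\ref{defi:external-x} and~\ref{defi:tensor-prod-of-substi}, so that $D(\varphi)\otimes\pi$ is seen to act on the $\bfs$-block through $D(\varphi)$ and to transport the auxiliary $\bft$-variables to the $\bft$-variables of the target exactly as the identity forces. Once these identifications are pinned down, the verifications on $A$ and on the $\bfs^u$ are purely formal manipulations of the definitions of $\sbullet$, $\widetilde{(\ )}$, and of the external/tensor products, and nothing deeper is needed.
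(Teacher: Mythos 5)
Your overall strategy---both sides are continuous ring homomorphisms out of $A[[\bfs]]_\nabla$, so it suffices to compare them on $A$ and on the variables---is exactly the paper's (its proof says precisely this and leaves the details to the reader), and your check on the variables $s\in\bfs$ is correct. The gap is in the verification on $A$, which is the step you yourself single out as ``the one genuinely delicate point'' and then assert rather than compute. By Definition \ref{defi:tensor-prod-of-substi}, the restriction of $D(\varphi)\otimes\pi$ to the $\bft$-block of $A[[\bfs\sqcup\bft]]_{\nabla\times\Delta}$ factors through $\pi$: one has $(D(\varphi)\otimes\pi)\pcirc\kappa=\kappa'\pcirc\pi$, where $\kappa'$ is the inclusion of constants into the target. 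Since $\pi$ is the augmentation, this sends $\bft^\alpha$ to $0$ for every $\alpha\neq 0$, so
$$(D(\varphi)\otimes\pi)\Bigl(\sum_{\scriptscriptstyle \alpha\in\Delta}D_\alpha(a)\,\bft^\alpha\Bigr)=D_0(a)=a,$$
not $\Phi_D(a)$ as you claim: the auxiliary $\bft$-variables are annihilated, not ``transported to the $\bft$-variables of the target''. More structurally, the whole right-hand side, read literally with the paper's definitions of $\otimes$ and of $\pi$, is $A$-linear (indeed it equals the substitution map $D(\varphi)$), whereas $\widetilde{D}\pcirc\varphi$ satisfies $\widetilde{D}(\varphi(ax))=\Phi_D(a)\,\widetilde{D}(\varphi(x))$ and so is only $\Phi_D$-semilinear over $A$; the two sides therefore cannot agree unless $D=\mathbb{I}$.

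What this reveals is that the identity cannot be verified as printed: the factor $\pi$ must be replaced by something that retains the $\bft$-monomials produced by $\widetilde{\kappa\sbullet D}$ and multiplies them into the answer. The statement that your two checks actually establish is
$$\widetilde{D}\pcirc\varphi=\Sigma\pcirc\left(D(\varphi)\otimes\Id\right)\pcirc\left(\widetilde{\kappa\sbullet D}\right)\pcirc\iota,$$
where $\Id$ is the identity of $A[[\bft]]_\Delta$ and $\Sigma:A[[\bft\sqcup\bft]]_{\Delta\times\Delta}\to A[[\bft]]_\Delta$ is the codiagonal combinatorial substitution map of \ref{nume:properties-external-x}. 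Indeed $(\widetilde{\kappa\sbullet D}\pcirc\iota)\bigl(\sum_\gamma r_\gamma\bfs^\gamma\bigr)=\sum_{\gamma,\beta}D_\beta(r_\gamma)\,\bfs^\gamma\bft^\beta$ is then carried to $\sum_{\gamma,\beta}D_\beta(r_\gamma)\,D(\varphi)(\bfs^\gamma)\,\bft^\beta=\sum_\gamma\Phi_D(r_\gamma)\,\widetilde{D}(\varphi(\bfs^\gamma))=\widetilde{D}(\varphi(r))$, which is what the left-hand side gives. As written, though, your argument assumes at the decisive step exactly what needs to be proved, and the computation it replaces does not close.
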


\begin{proof} It is enough to check that both maps coincide on any $a\in A$ and on any $s\in\bfs$. Details are left to the reader.
\end{proof}

\begin{remark} Let us notice that with the notations of Propositions \ref{prop:varphi-D-main} and \ref{prop:D-circ-varphi}, we have
$ \varphi^D = (\varphi \sbullet D)^*(\varphi)$.
\end{remark}

The following proposition will not be used in this paper and will be stated without proof.

\begin{proposition} For any HS-derivation $D\in \HS_k^\bfs(A;\nabla)$ and any substitution map $\varphi \in \Sub(\bft,\bfu;\Delta,\Omega) $, there exists a substitution map $D\star \varphi \in \Sub(\bfs\sqcup \bft,\bfs\sqcup \bfu;\nabla \times \Delta, \nabla \times \Omega)$ such that for each HS-derivation 
$E\in \HS_k^\bft(A;\Delta)$  we have:
$$ D \btimes (\varphi\sbullet E)  = (D\star \varphi)\sbullet (D\btimes E).
$$
\end{proposition}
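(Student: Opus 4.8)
The goal is to produce, for a fixed HS-derivation $D\in\HS_k^\bfs(A;\nabla)$ and a fixed substitution map $\varphi\in\Sub(\bft,\bfu;\Delta,\Omega)$, a new substitution map $D\star\varphi$ so that the identity $D\btimes(\varphi\sbullet E)=(D\star\varphi)\sbullet(D\btimes E)$ holds simultaneously for \emph{all} $E\in\HS_k^\bft(A;\Delta)$. The natural strategy is to translate everything through the dictionary of Proposition~\ref{prop:caracter_HS} and diagram~(\ref{eq:HS-funda}), i.e. to work with the algebra automorphisms $\widetilde{(-)}$ and the $k$-algebra maps $\Phi_{(-)}$ attached to HS-derivations, rather than with the components directly. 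First I would recall that $D\btimes E$ corresponds (via Lemma~\ref{lemma:tilde-otimes}) to $\widetilde{D}\btimes\widetilde{E}$ and that, by \ref{nume:properties-external-x}, $D\btimes E=(\iota\sbullet D)(\kappa\sbullet E)$ where $\iota:A[[\bfs]]_\nabla\to A[[\bfs\sqcup\bft]]_{\nabla\times\Delta}$ and $\kappa:A[[\bft]]_\Delta\to A[[\bfs\sqcup\bft]]_{\nabla\times\Delta}$ are the combinatorial substitution maps from the two inclusions. Likewise $D\btimes(\varphi\sbullet E)=(\iota'\sbullet D)(\kappa'\sbullet(\varphi\sbullet E))$ with $\iota':A[[\bfs]]_\nabla\to A[[\bfs\sqcup\bfu]]_{\nabla\times\Omega}$, $\kappa':A[[\bfu]]_\Omega\to A[[\bfs\sqcup\bfu]]_{\nabla\times\Omega}$, and $\kappa'\sbullet(\varphi\sbullet E)=(\kappa'\pcirc\varphi)\sbullet E$ by the associativity of $\sbullet$ in \ref{nume:def-sbullet}.

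\textbf{Constructing the map.} The candidate is essentially forced: I would set $D\star\varphi:=(\iota\sbullet D)^{\,?}$-type correction of the combinatorial map $\Id_\bfs\otimes\varphi:A[[\bfs\sqcup\bft]]_{\nabla\times\Delta}\to A[[\bfs\sqcup\bfu]]_{\nabla\times\Omega}$, twisted by the action of $\iota\sbullet D$ in the sense of Proposition~\ref{prop:varphi-D-main}. Concretely, write $G:=\iota\sbullet D\in\HS_k^{\bfs\sqcup\bft}(A;\nabla\times\Delta)$ and consider the substitution map $\Psi:=\Id_\bfs\otimes\varphi$ (Definition~\ref{defi:tensor-prod-of-substi}); I claim $D\star\varphi:=\Psi^{G}$ (in the notation $\varphi\mapsto\varphi^D$ of Proposition~\ref{prop:varphi-D-main}(i)) does the job, possibly after composing with the obvious combinatorial identification of $A[[\bfs\sqcup\bfu]]_{\nabla\times\Omega}$ with itself. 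The verification then reduces, via diagram~(\ref{eq:HS-funda}) and the fact that a substitution-type identity between two maps into a power series ring can be checked after restriction to $A$ and on the generators $s\in\bfs$, $t\in\bft$ (as in the proofs of Propositions~\ref{prop:varphi-D-main} and \ref{prop:D-circ-varphi}), to an identity of $k$-algebra homomorphisms. On $A$ and on the $\bfs$-variables both sides act through $\widetilde{D}$ composed with $\Psi$ in a way controlled by Proposition~\ref{prop:varphi-D-main}(i), namely $\widetilde{\Psi\sbullet G}\pcirc\Psi^{G}=\Psi\pcirc\widetilde{G}$; on the $\bft$-variables one uses that $\varphi\sbullet E$ is governed by $\Phi_{\varphi\sbullet E}=\varphi\pcirc\Phi_E$ together with the compatibilities $(\varphi\sbullet r)\btimes r'=(\varphi\otimes\Id)\sbullet(r\btimes r')$ and $r\btimes(r'\sbullet\psi)=(r\btimes r')\sbullet(\Id\otimes\psi)$ from \ref{nume:properties-external-x}.

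\textbf{Key steps in order.} (1) Rewrite both sides of the desired identity in terms of external products of $\widetilde{D}$, $\widetilde{E}$, $\varphi$ using Lemma~\ref{lemma:tilde-otimes} and \ref{nume:properties-external-x}. (2) Recognize $D\btimes E$ as $G\star(\text{something})$-free, i.e. express $\kappa\sbullet E$ through the combinatorial map and reduce $\kappa'\sbullet(\varphi\sbullet E)$ to $(\Id_\bfs\otimes\varphi)$ applied after the inclusion, using associativity of $\sbullet$. (3) Invoke Proposition~\ref{prop:varphi-D-main}(i) for the pair $(G,\Id_\bfs\otimes\varphi)$ to produce $\Psi^G$ and the defining relation $\widetilde{\Psi\sbullet G}\pcirc\Psi^G=\Psi\pcirc\widetilde{G}$. (4) Check that $\Psi\sbullet G=D\star(\text{trivial})$ gives the left factor $\iota'\sbullet D$ correctly, and that feeding $\widetilde{E}$ through on the $\bft$-side matches $\kappa'\pcirc\varphi$. (5) Conclude by the uniqueness clause in Proposition~\ref{prop:varphi-D-main}(i) (or by checking on generators as in Proposition~\ref{prop:D-circ-varphi}). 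The main obstacle I anticipate is bookkeeping step (4): making sure that the twisted map $\Psi^G$, which a priori depends on $D$ through the full automorphism $\widetilde{G}$, genuinely factors so that its effect on the $\bfs\sqcup\bft\to\bfs\sqcup\bfu$ variables is exactly "$\varphi$ conjugated by $\widetilde{D}$ on the $\bft$-part and identity on the $\bfs$-part", and that this is independent of $E$ — this is where one must combine Proposition~\ref{prop:varphi-D-main}(iii) (behaviour of $(-)^D$ under composition of substitution maps) with the tensor-product formulas for ${\bf C}_{(e,f)}(\varphi\otimes\psi,-)$ recorded after Definition~\ref{defi:tensor-prod-of-substi}. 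Since the problem statement says this proposition is stated without proof and not used in the paper, I would in practice present only this sketch and defer the routine generator-by-generator checks to the reader, exactly in the style of Proposition~\ref{prop:D-circ-varphi} above.
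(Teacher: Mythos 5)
The paper states this proposition explicitly without proof (``will not be used in this paper and will be stated without proof''), so there is no argument of the author's to compare yours against; I can only assess your sketch on its own terms. Your overall strategy is the right one: writing $D\btimes E=(\iota\sbullet D)\pcirc(\kappa\sbullet E)$ and $D\btimes(\varphi\sbullet E)=(\iota'\sbullet D)\pcirc\bigl((\kappa'\pcirc\varphi)\sbullet E\bigr)$, and then feeding the first decomposition into Proposition \ref{prop:varphi-D-main}(ii), reduces the problem to finding a substitution map $\Theta=D\star\varphi$ with $\Theta\pcirc\iota=\iota'$ and $\Theta^{\iota\sbullet D}\pcirc\kappa=\kappa'\pcirc\varphi=(\Id\otimes\varphi)\pcirc\kappa$. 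Indeed, with $G=\iota\sbullet D$ and $\Psi=\Id\otimes\varphi$, Proposition \ref{prop:varphi-D-main}(ii) gives
$$\Theta\sbullet(D\btimes E)=\Theta\sbullet\bigl(G\pcirc(\kappa\sbullet E)\bigr)=(\Theta\sbullet G)\pcirc\bigl((\Theta^{G}\pcirc\kappa)\sbullet E\bigr),$$
any $\Theta$ fixing the $\bfs$-variables yields $\Theta\sbullet G=(\Theta\pcirc\iota)\sbullet D=\iota'\sbullet D$, and so everything hinges on arranging $\Theta^{G}=\Psi$.

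Here is the gap: your candidate $\Theta=\Psi^{G}$ is the wrong twist. By Proposition \ref{prop:varphi-D-main}(ii) one has $(\Psi^{G})^{G}=\Psi^{G\pcirc G}$, not $\Psi$, so with your choice the second factor becomes $(\Psi^{G\pcirc G}\pcirc\kappa)\sbullet E$, and $\Psi^{G\pcirc G}\pcirc\kappa\neq\Psi\pcirc\kappa$ as soon as $\varphi$ has non-constant coefficients and $D\neq\mathbb{I}$; when $\varphi$ has constant coefficients all these twists collapse to $\Psi$ by Proposition \ref{prop:varphi-D-main}(v), which is why the discrepancy is easy to miss. The correct choice is $D\star\varphi:=\Psi^{G^{*}}=(\Id\otimes\varphi)^{(\iota\sbullet D)^{*}}$, since then $(\Psi^{G^{*}})^{G}=\Psi^{G^{*}\pcirc G}=\Psi^{\mathbb{I}}=\Psi$ exactly as required; equivalently $D\star\varphi=\widetilde{\iota'\sbullet D}\pcirc(\Id\otimes\varphi)\pcirc\widetilde{\iota\sbullet D}^{-1}$, an $A$-algebra substitution map (by Proposition \ref{prop:varphi-D-main}(i) applied to $G^{*}$) which fixes each $s\in\bfs$ and sends $t\in\bft$ to $\widetilde{\iota'\sbullet D}(\varphi(t))$. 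With this one correction your steps (1)--(5) go through essentially as you describe, and the independence from $E$ that worried you in step (4) is automatic, since $\Psi^{G^{*}}$ is built from $D$ and $\varphi$ alone.
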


\section{Generating HS-derivations}

In this section we show how the action of substitution maps allows us to express any HS-derivation in terms of a fixed one under some natural hypotheses. We will be concerned with $(\bfs,\ft_m(\bfs))$-variate HS-derivations, where $\ft_m(\bfs) = \{\alpha\in\N^{(\bfs)}\ |\ |\alpha|\leq m\}$. To simplify we will write  $A[[\bfs]]_m := A[[\bfs]]_{\ft_m(\bfs)}$ and $\HS^\bfs_k(A;m):= \HS^\bfs_k(A;\ft_m(\bfs))$ for any integer $m\geq 1$, and $\HS^\bfs_k(A;\infty) := \HS^\bfs_k(A)$. For $m\geq n\geq 1$ we will denote $\tau_{mn}: \HS^\bfs_k(A;m) \to \HS^\bfs_k(A;n)$ the truncation map.
\medskip

Assume that $m\geq 1$ is an integer and 
let  $\varphi: A[[\bfs]]_m  \to A[[\bft]]_m$ be a substitution map. Let us write
 $$\varphi(s)=c^s = 
\sum_{\substack{\scriptstyle \beta\in\N^{(\bft)}\\ \scriptstyle 0<|\beta|\leq m }} c^s_\beta \bft^\beta \in \fn_0(\bft)/\ft_m(\bft)  \subset  A[[\bft]]_m,\quad s\in \bfs
$$ 
and let us denote by 
$\varphi_m, \varphi_{<m}:A[[\bfs]]_m \to A[[\bft]]_m$
the substitution maps determined by
\begin{eqnarray*}
& \displaystyle \varphi_m(s) = c_m^s:=  \sum_{\substack{\scriptstyle \beta\in\N^{(\bft)}\\ \scriptstyle |\beta|= m }} c^s_\beta \bft^\beta \in \fn_0(\bft)/\ft_m(\bft)  \in  A[[\bft]]_m,\quad s\in \bfs,&\\
& \displaystyle \varphi_{<m}(s) = c_{<m}^s:=  \sum_{\substack{\scriptstyle \beta\in\N^{(\bft)}\\ \scriptstyle 0<|\beta|< m }} c^s_\beta \bft^\beta \in \fn_0(\bft)/\ft_m(\bft)  \in  A[[\bft]]_m,
\quad s\in \bfs.&
\end{eqnarray*}
We have $c^s = c_m^s + c_{<m}^s$ and so $\varphi = \varphi_m + \varphi_{<m}$ (see \ref{nume:operaciones-con-substitutions}).

\begin{proposition} \label{prop:previa_main} 
With the above notations, for any HS-derivation $D\in \HS^\bfs_k(A;m)$ the following properties hold:
\begin{enumerate}
\item[(1)] $\left(\varphi_m\sbullet D\right)_{e} = 0$ for $0<|e| < m$ and $\left(\varphi_m\sbullet D\right)_{e} = \sum_{t\in \bfs} c^t_e D_{\bfs^t}$ for $|e| = m$,
where the $\bfs^t$ are the elements of the canonical basis of $\N^{(\bfs)}$.
\item[(2)] $\varphi \sbullet D = (\varphi_{m} \sbullet D) \pcirc (\varphi_{<m} \sbullet
D) = (\varphi_{<m} \sbullet
D)\pcirc (\varphi_{m} \sbullet D)$.
\end{enumerate}
\end{proposition}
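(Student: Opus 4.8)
The plan is to prove part (1) by direct computation from the explicit formula (\ref{eq:expression_phi_D}) in \ref{nume:def-sbullet-HS}, and then to deduce part (2) by combining part (1) with the commutation identity $\varphi = \varphi_m + \varphi_{<m}$ together with the ``constant-coefficient'' behaviour that is trivially available here because the top-degree piece of $\varphi_m$ consists only of monomials of degree exactly $m$ in a ring truncated at degree $m$.

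First I would handle part (1). By (\ref{eq:expression_phi_D}) we have $(\varphi_m\sbullet D)_e = \sum_{\alpha\in\nabla,\,|\alpha|\le|e|}{\bf C}_e(\varphi_m,\alpha) D_\alpha$ for $e\in\ft_m(\bfs)$. For the substitution map $\varphi_m$, each $c^s = c_m^s$ is supported in norm exactly $m$, so by (\ref{eq:condition-supp-substi}) the support of $\varphi_m(\bfs^\alpha)$ lies in $\sum_{t\in\supp\alpha}\alpha_t\cdot\supp(c^t)$, all of whose elements have norm $m|\alpha|$. Hence ${\bf C}_e(\varphi_m,\alpha) = 0$ unless $|e| = m|\alpha|$; within the truncation $\ft_m(\bft)$ the only surviving possibilities are $\alpha=0$ (giving $e=0$) and $|\alpha|=1$ with $|e|=m$. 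This immediately gives $(\varphi_m\sbullet D)_e = 0$ for $0<|e|<m$. For $|e|=m$, only the terms with $|\alpha|=1$ contribute, i.e.\ $\alpha=\bfs^t$, and by Lemma \ref{lemma:behavior_C_alpha}(1) we have ${\bf C}_e(\varphi_m,\bfs^t) = (c_m^t)_e = c^t_e$ (the $e$-coefficient of $c^t$ for $|e|=m$). Summing over $t\in\bfs$ yields $(\varphi_m\sbullet D)_e = \sum_{t\in\bfs} c^t_e D_{\bfs^t}$, as claimed. (The finiteness condition (\ref{eq:cond-fini-c}) ensures this sum is finite.)

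For part (2), I would argue as follows. Since $\varphi = \varphi_m + \varphi_{<m}$, I want $\varphi\sbullet D = (\varphi_m\sbullet D)\pcirc(\varphi_{<m}\sbullet D)$ and likewise with the factors reversed. Consider the substitution map $E := \varphi_m\sbullet D$ and note from part (1) that $\ell(E)\ge m$ (indeed $E_e=0$ for $0<|e|<m$), so within the truncation $\ft_m(\bft)$ the only nonzero non-identity components of $E$ lie in norm exactly $m$. Now apply Proposition \ref{prop:varphi-D-main}(ii): $\varphi\sbullet(D) $ is not directly a composition, so instead I work with the associated $k$-algebra maps via the isomorphism (\ref{eq:HS-iso-Hom(A,A[[s]])}). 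We have $\Phi_{\varphi\sbullet D} = \varphi\pcirc\Phi_D = (\varphi_m+\varphi_{<m})\pcirc\Phi_D$, and I must compare this with $\Phi_{(\varphi_m\sbullet D)\pcirc(\varphi_{<m}\sbullet D)} = \widetilde{\varphi_m\sbullet D}\pcirc\Phi_{\varphi_{<m}\sbullet D} = \widetilde{\varphi_m\sbullet D}\pcirc\varphi_{<m}\pcirc\Phi_D$. Thus it suffices to show $\widetilde{\varphi_m\sbullet D}\pcirc\varphi_{<m} = \varphi_m + \varphi_{<m}$ as maps $A[[\bfs]]_m\to A[[\bft]]_m$ (after restriction to $A$, i.e.\ to generators $s\in\bfs$). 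On a generator $s$, $\varphi_{<m}(s) = c_{<m}^s$ has order $\ge 1$; applying $\widetilde{\varphi_m\sbullet D}$, which differs from the identity only in degree $\ge m$, multiplies each monomial $\bft^\beta$ of $c_{<m}^s$ (with $1\le|\beta|<m$) by corrections of degree $\ge m$ relative to $|\beta|$, hence of total degree $>m$, which vanish in $\ft_m(\bft)$; the degree-$m$ monomials of $c_m^s$ are fixed by $\widetilde{\varphi_m\sbullet D}$ since it acts as the identity through degree $m-1$ and any further correction again lands in degree $>m$. So $\widetilde{\varphi_m\sbullet D}\pcirc\varphi_{<m}$ sends $s\mapsto c_{<m}^s$... but I also need the $c_m^s$ term, which must come from $\widetilde{\varphi_m\sbullet D}$ itself — one checks $\widetilde{\varphi_m\sbullet D}(s)$ contributes the missing $c_m^s$-part via part (1). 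This bookkeeping is the main obstacle: carefully tracking which monomials survive truncation. For the reversed order $(\varphi_{<m}\sbullet D)\pcirc(\varphi_m\sbullet D)$, I would use the analogous identity, or more slickly observe that $E = \varphi_m\sbullet D$ satisfies $\ell(E)\ge m$, so $E$ is central in $\HS^\bfs_k(A;m)$: by Proposition \ref{prop:ell-corchete}, $\ell([E,F]) \ge \ell(E)+\ell(F) \ge m+1 > m$ for any $F$, hence $[E,F]=\mathbb{I}$ in $\HS^\bfs_k(A;m)$, i.e.\ $E\pcirc F = F\pcirc E$. Applying this with $F = \varphi_{<m}\sbullet D$ gives the commutativity in (2), completing the proof.
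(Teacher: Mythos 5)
Your part (1) is correct, and your homogeneity argument is in fact a little cleaner than the paper's: instead of checking term by term that each product of coefficients in ${\bf C}_e(\varphi_m,\alpha)$ vanishes when $|e|=m$ and $|\alpha|\geq 2$, you use (\ref{eq:condition-supp-substi}) to see that $\supp(\varphi_m(\bfs^\alpha))$ sits in norm exactly $m|\alpha|$, which disposes of all cases $0<|e|<m$ and ($|e|=m$, $|\alpha|\geq 2$) at once. Likewise, your closing observation that $\elln(\varphi_m\sbullet D)\geq m$ makes $\varphi_m\sbullet D$ central in $\HS^\bft_k(A;m)$ by Proposition \ref{prop:ell-corchete} is a nice conceptual replacement for the paper's hand computation of the commutation in (2).

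The problem is the main identity of (2). You reduce it to showing $\widetilde{\varphi_m\sbullet D}\pcirc\varphi_{<m}=\varphi$ as maps $A[[\bfs]]_m\to A[[\bft]]_m$, but this identity is false. On $a\in A$ the left-hand side gives $a+\sum_{|e|=m}\bigl(\sum_t c^t_e D_{\bfs^t}(a)\bigr)\bft^e$ while the right-hand side gives $a$; and on a generator $s$ the left-hand side gives exactly $c^s_{<m}$ (all corrections land in degree $>m$, as you compute) while the right-hand side gives $c^s_{<m}+c^s_m$. You notice the missing $c^s_m$ and assert it ``must come from $\widetilde{\varphi_m\sbullet D}$ itself,'' but $\widetilde{\varphi_m\sbullet D}(s)$ is not even defined ($s$ is a variable of $\bfs$, not of $\bft$), and no rescue is possible at the level of generators. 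What is true is only the composite identity $\widetilde{\varphi_m\sbullet D}\pcirc\varphi_{<m}\pcirc\Phi_D=\varphi\pcirc\Phi_D$: the degree-$m$ discrepancy $\sum_t D_{\bfs^t}(a)\,c^t_m$ in $\varphi(\Phi_D(a))$ is recovered by $\widetilde{\varphi_m\sbullet D}$ acting on the constant term $D_0(a)=a$ of $\varphi_{<m}(\Phi_D(a))$ --- a contribution your generator-only check never sees. So you must compare components, as the paper does: for $|e|=m$ one shows $(\varphi\sbullet D)_e=(\varphi_m\sbullet D)_e+(\varphi_{<m}\sbullet D)_e$, splitting the sum over $\alpha$ into $|\alpha|=1$ (handled by part (1)) and $|\alpha|\geq 2$ (where ${\bf C}_e(\varphi,\alpha)={\bf C}_e(\varphi_{<m},\alpha)$ because no single factor can have norm $m$), and combines this with the vanishing of the cross terms in $(\varphi_m\sbullet D)\pcirc(\varphi_{<m}\sbullet D)$ to conclude. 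As written, your proof of (2) does not go through.
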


\begin{proof} (1) Let us denote $E'= \varphi_m\sbullet D$.  Since $\tau_{m,m-1}(E')$ coincides with\\ $\tau_{m,m-1}(\varphi_m) \sbullet \tau_{m,m-1}(D)$ (see (\ref{eq:trunca_bullet})) and $\tau_{m,m-1}(\varphi_m)$ is the trivial substitution map, we deduce that $\tau_{m,m-1}(E') = \mathbb{I}$, i.e. $E_e=0$ whenever $0<|e| < m$.

From (\ref{eq:expression_phi_D}) and (\ref{eq:explicit-C_e(varphi,alpha)}), for $|e|>0$ we have $E'_e = \sum_{\scriptscriptstyle 0<|\alpha|\leq |e|} {\bf C}_e(\varphi_m,\alpha) D_\alpha$, with 
$$ {\bf C}_e(\varphi_m,\alpha)=  \sum_{\scriptscriptstyle \mathcal{f}^{\bullet\bullet} \in \Par(e,\alpha)}
C_{\mathcal{f}^{\bullet\bullet}}
 \quad \text{for\ }\ |\alpha| \leq |e|,\quad 
 C_{\mathcal{f}^{\bullet\bullet}} = \prod_{\scriptscriptstyle s\in \supp \alpha} \prod_{\scriptstyle r=1}^{\scriptstyle \alpha_s}  (c^s_m)_{\mathcal{f}^{sr}}.
$$
Assume now that $|e|=m$, $1 < |\alpha|\leq m$ and let $\mathcal{f}^{\bullet\bullet} \in \Par(e,\alpha)$. 
Since 
$$\sum_{\scriptscriptstyle s\in \supp \alpha} \sum_{\scriptscriptstyle r=1}^{\scriptscriptstyle  \alpha_s} \mathcal{f}^{sr} = e,$$
we deduce that $|\mathcal{f}^{sr}| < |e|=m$ for all $s,r$ and so $(c^s_m)_{\mathcal{f}^{sr}}=0$ and $C_{\mathcal{f}^{\bullet\bullet}}=0$. Consequently, ${\bf C}_e(\varphi_m,\alpha)=0$.
\medskip

If $|\alpha|= 1$, then $\alpha$ must be an element $\bfs^t$ of the canonical basis of $\N^{(\bfs)}$
and from Lemma \ref{lemma:behavior_C_alpha}, (1), we know that ${\bf C}_e(\varphi_m,\bfs^t)=(c^t_m)_e$. We conclude that
$$
E'_e =\cdots = \sum_{\scriptscriptstyle t\in \bfs} {\bf C}_e(\varphi_m,\bfs^t) D_{\bfs^t} = \sum_{\scriptscriptstyle t\in \bfs} (c^t_m)_e D_{\bfs^t}=  \sum_{\scriptscriptstyle t\in \bfs} c^t_e D_{\bfs^t}.
$$

\noindent (2)
Let us write $E=\varphi \sbullet D$, $E'= \varphi_m\sbullet D$ and $E''=\varphi_{<m} \sbullet
D$. We have
\begin{eqnarray*}
&\tau_{m,m-1}(E) = \tau_{m,m-1}(\varphi) \sbullet \tau_{m,m-1}(D) =
&\\
& \tau_{m,m-1}(\varphi_{<m}) \sbullet \tau_{m,m-1}(D) = \tau_{m,m-1}(E''). 
\end{eqnarray*}
By property (1), we know that $\tau_{m,m-1}(E')$ is the identity and we deduce that $\tau_{m,m-1}(E) =
\tau_{m,m-1}(E'\pcirc E'') = \tau_{m,m-1}(E''\pcirc E')$. So
$ E_e = (E'\pcirc E'')_e = (E''\pcirc E')_e $ for $|e|<m$.

Now, let $e\in \N^{(\bft)}$ be with $|e|=m$. By using again that $\tau_{m,m-1}(E')$ is the identity, we have
$(E'\pcirc E'')_e = \dots = E'_e + E''_e = \cdots = (E'' \pcirc E')_e$, and we conclude that 
$E'\pcirc E'' = E'' \pcirc E'$.

On the other hand, from Lemma \ref{lemma:behavior_C_alpha}, (1), we have
that ${\bf C}_e(\varphi_{<m},\alpha) =0$ whenever $|\alpha|=1$, and one can see
 that ${\bf C}_e(\varphi,\alpha) = {\bf C}_e(\varphi_{<m},\alpha)$ whenever that 
$2\leq |\alpha|\leq |e|$. So:
\begin{eqnarray*}
&\displaystyle E_e= \sum_{\scriptscriptstyle 1\leq |\alpha|\leq m}
{\bf C}_e(\varphi,\alpha) D_\alpha = \sum_{\scriptscriptstyle |\alpha|=1}
{\bf C}_e(\varphi,\alpha) D_\alpha + \sum_{\scriptscriptstyle 2\leq |\alpha|\leq m}
{\bf C}_e(\varphi,\alpha) D_\alpha=&\\
&\displaystyle \sum_{\scriptscriptstyle t\in \bfs} c^t_e D_{\bfs^t} + \sum_{\scriptscriptstyle 2\leq |\alpha|\leq m}
{\bf C}_e(\varphi_{<m},\alpha) D_\alpha=  E'_e + \sum_{\scriptscriptstyle 1\leq |\alpha|\leq m}
{\bf C}_e(\varphi_{<m},\alpha) D_\alpha=  E'_e + E''_e
\end{eqnarray*}
and $E=E'\pcirc E'' = E'' \pcirc E'$.
\end{proof}

The following theorem generalizes Theorem 2.8 in
\cite{magda_nar_hs} to the case where $\Der_k(A)$ is not necessarily a finitely generated $A$-module.
The use of substitution maps makes its proof more conceptual.

\begin{theorem} \label{thm:main_1} Let $m\geq 1$ be an integer, or $m=\infty$, and $D\in \HS^\bfs_k(A;m)$ a $\bfs$-variate HS-derivation of length $m$ such that $\{D_\alpha, |\alpha|=1\}$ is a system of generators of the $A$-module $\Der_k(A)$. Then, for each set $\bft$ and each HS-derivation $G\in\HS^\bft_k(A;m)$ there is a substitution map $\varphi:A[[\bfs]]_m \to A[[\bft]]_m$ such that $G = \varphi \sbullet D$. Moreover, if $\{D_\alpha, |\alpha|=1\}$ is a basis of $\Der_k(A)$, $\varphi$ is uniquely determined.
\end{theorem}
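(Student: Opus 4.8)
The plan is to construct $\varphi$ by successive approximation on the order filtration, using Proposition~\ref{prop:previa_main} to control the ``top degree'' contribution at each stage. Concretely, I would build a sequence of substitution maps $\varphi^{(n)}:A[[\bfs]]_n \to A[[\bft]]_n$, for $1\leq n\leq m$, such that $\tau_{n,n-1}(\varphi^{(n)}) = \varphi^{(n-1)}$ (suitably interpreted) and $\tau_{mn}(G) = \varphi^{(n)}\sbullet \tau_{mn}(D)$, and then pass to the (inverse) limit to get $\varphi$ when $m=\infty$; when $m$ is finite this is just a finite induction. The base case $n=1$ uses the bijection \eqref{eq:HS1-Der}: $\HS^\bfs_k(A;\ft_1(\bfs))\simeq \Der_k(A)^\bfs$, $\HS^\bft_k(A;\ft_1(\bft))\simeq\Der_k(A)^\bft$. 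Since $\{D_{\bfs^s}\}_{s\in\bfs}$ generates $\Der_k(A)$, for each $t\in\bft$ we may write $G_{\bft^t} = \sum_{s\in\bfs} c^s_t\, D_{\bfs^s}$ for some $c^s_t\in A$ (almost all zero in $s$, so property \eqref{eq:cond-fini-c} holds), and we set $\varphi^{(1)}(s) := \sum_{t\in\bft} c^s_t\,\bft^t$. By Proposition~\ref{prop:previa_main}(1) (the $m=1$ case), $\varphi^{(1)}\sbullet \tau_{m1}(D)$ has degree-one component $\sum_{s} c^s_t D_{\bfs^s} = G_{\bft^t}$, as wanted.

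For the inductive step, suppose $\varphi^{(n-1)}$ has been constructed with $\tau_{m,n-1}(G) = \varphi^{(n-1)}\sbullet\tau_{m,n-1}(D)$. Lift $\varphi^{(n-1)}$ to a substitution map $\psi:A[[\bfs]]_n\to A[[\bft]]_n$ by adjoining zero coefficients in degree $n$ (using the scission of the truncation), and set $H := G' \pcirc (\psi\sbullet D')^*\in\HS^\bft_k(A;n)$, where $D' = \tau_{mn}(D)$, $G'=\tau_{mn}(G)$. By the inductive hypothesis and \eqref{eq:trunca_bullet}, $\tau_{n,n-1}(H)=\mathbb{I}$, so $H_\beta = 0$ for $0<|\beta|<n$, and $H_\beta$ for $|\beta|=n$ is a $k$-derivation by Proposition~\ref{prop:ell-new} (or directly, since $H-\mathbb{I}$ has order $\geq n$). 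Using the generation hypothesis again, write $H_\beta = \sum_{s\in\bfs} d^s_\beta\, D_{\bfs^s}$ for $|\beta|=n$ (finitely many nonzero $s$ for each $\beta$), and define a ``degree-$n$ correction'' substitution map $\chi:A[[\bfs]]_n\to A[[\bft]]_n$ by $\chi(s) := s + \sum_{|\beta|=n} d^s_\beta\,\bft^\beta$. Decompose $\chi = \chi_{<n} + \chi_n$ as in the paragraph before Proposition~\ref{prop:previa_main}; here $\chi_{<n}$ is just the identity-like map $s\mapsto s$ and $\chi_n(s) = \sum_{|\beta|=n} d^s_\beta\bft^\beta$. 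Now I would take $\varphi^{(n)} := \psi \pcirc \chi$ (composition of substitution maps, Lemma~\ref{lemma:compos_subst_maps}). Using $\psi\sbullet(\chi\sbullet D') = (\psi\pcirc\chi)\sbullet D'$ (see \ref{nume:def-sbullet-HS}), Proposition~\ref{prop:previa_main}(1) applied to $\chi$ (giving $(\chi_n\sbullet D')_\beta = H_\beta$ for $|\beta|=n$ and $=0$ for $0<|\beta|<n$, while $\chi_{<n}\sbullet D' = D'$), and Proposition~\ref{prop:previa_main}(2), one checks $\chi\sbullet D' = (\chi_n\sbullet D')\pcirc D'$ agrees with $H\pcirc D'$ up through degree $n$; composing with $\psi\sbullet(-)$ and using $H = G'\pcirc(\psi\sbullet D')^*$ recovers $\tau_{mn}(G) = \varphi^{(n)}\sbullet\tau_{mn}(D)$. (One must also check $\tau_{n,n-1}(\varphi^{(n)}) = \varphi^{(n-1)}$, which is immediate since $\chi$ reduces to the identity modulo degree $n$.)

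For the limit: when $m<\infty$ this process terminates and $\varphi := \varphi^{(m)}$ works by construction. When $m=\infty$, the $\varphi^{(n)}$ are compatible under truncation, so by the identification $\Sub_A(\bfs,\bft;\N^{(\bfs)},\N^{(\bft)}) = \varprojlim_n \Sub_A(\bfs,\bft;\ft_n(\bfs),\ft_n(\bft))$ (which follows from $\U^\bfs(R;\Delta) = \varprojlim \U^\bfs(R;\Delta^m)$ and the analogous statement for substitution maps in \ref{nume:operaciones-con-substitutions}) they glue to a substitution map $\varphi$, and $G = \varphi\sbullet D$ since this holds after every truncation $\tau_n$ and $\HS^\bft_k(A) = \varprojlim_n \HS^\bft_k(A;\ft_n(\bft))$. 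Finally, for uniqueness when $\{D_{\bfs^s}\}$ is a basis: if $\varphi\sbullet D = \psi\sbullet D$, I would argue by induction on $n$ that $\tau_n(\varphi) = \tau_n(\psi)$. Degree one: by \eqref{eq:expression_phi_D}, ${\bf C}_{\bft^t}(\varphi,\bfs^s) = c^s_t$ is read off from $G_{\bft^t} = \sum_s c^s_t D_{\bfs^s}$, which determines the $c^s_t$ uniquely because $\{D_{\bfs^s}\}$ is a basis; by Lemma~\ref{lemma:behavior_C_alpha}(1) this is exactly $\varphi(s)$'s degree-one data. For the inductive step one uses the recursive formula in Proposition~\ref{prop:varphi-D} together with Lemma~\ref{lemma:behavior_C_alpha}(1): at level $e$ with $|e|=n$, the coefficient $(\varphi\sbullet D)_e$ contains the term $\sum_s c^s_e D_{\bfs^s}$ ($c^s_e$ the degree-$n$ coefficients of $\varphi$) plus terms involving only lower-degree coefficients of $\varphi$, already determined; linear independence of $\{D_{\bfs^s}\}$ then pins down the $c^s_e$.

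\textbf{Main obstacle.} The bookkeeping in the inductive step — verifying that the degree-$\leq n$ part of $\varphi^{(n)}\sbullet \tau_{mn}(D)$ really equals $\tau_{mn}(G)$ — is the crux. The subtlety is that $\sbullet$ is \emph{not} multiplicative in the substitution-map slot in general (only $\psi\pcirc(\varphi+\varphi')=\psi\pcirc\varphi+\psi\pcirc\varphi'$, not $(\varphi+\varphi')\sbullet D = (\varphi\sbullet D)\pcirc(\varphi'\sbullet D)$), so one cannot naively split $\varphi^{(n)} = \varphi^{(n-1)} + (\text{correction})$ and distribute; one must instead work with \emph{composition} of substitution maps and lean on Proposition~\ref{prop:previa_main}, whose point is precisely that the top-degree correction $\chi_n$ \emph{does} behave multiplicatively relative to $\chi_{<n}$. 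Keeping straight which maps are being composed in $\Sub$ versus which HS-derivations are being composed in $\HS^\bft_k$, and making sure every ``$=$'' is only asserted modulo the appropriate $\ft_{n}$, is where care is needed; none of it is deep, but it is easy to get an index or an order-of-composition wrong.
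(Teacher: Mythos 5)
Your skeleton (induction on the length, lift the level-$(n-1)$ solution, observe that the discrepancy $H=G'\pcirc(\psi\sbullet D')^*$ is concentrated in degree $n$ where each $H_\beta$ is a derivation, expand it in the generators $D_{\bfs^s}$, pass to the limit for $m=\infty$, and the uniqueness argument) matches the paper's proof. But the crucial gluing step — turning the degree-$n$ correction into an actual modification of $\psi$ — is wrong as you set it up, and the obstacle you identify is exactly backwards. You define $\chi(s):=s+\sum_{|\beta|=n}d^s_\beta\bft^\beta$, which does not typecheck: if $\chi$ is to be precomposed with $\psi:A[[\bfs]]_n\to A[[\bft]]_n$ it must land in $A[[\bfs]]_n$, so its degree-$n$ coefficients would be indexed by $\N^{(\bfs)}$, not by $\N^{(\bft)}$ where $H$ lives; if instead it is postcomposed it must be a map $A[[\bft]]_n\to A[[\bft]]_n$ and then $\chi_n\sbullet F$ has degree-$n$ components $\sum_t d^t_e F_{\bft^t}$, which lie in the span of the $F_{\bft^t}$ and cannot in general reach an arbitrary $H_e\in\Der_k(A)$. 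More fundamentally, even after fixing the types, $(\psi\pcirc\chi)\sbullet D'=\psi\sbullet(\chi\sbullet D')$ and $\psi\sbullet(-)$ is \emph{not} a group homomorphism for $\pcirc$: by Proposition~\ref{prop:varphi-D-main}(ii) one gets $(\psi\sbullet(\chi_n\sbullet D'))\pcirc(\psi^{\chi_n\sbullet D'}\sbullet D')$, and the degree-$n$ part of the first factor is $\sum_{|\alpha|=n}{\bf C}_e(\psi,\alpha)(\chi_n\sbullet D')_\alpha$, i.e.\ it is filtered through the degree-one coefficients of $\psi$ (Lemma~\ref{lemma:behavior_C_alpha}(2)). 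If, say, $\tau_{n,1}(\psi)$ is trivial, this factor is $\mathbb{I}$ no matter how you choose $\chi$, so the correction cannot produce $H$. Composition of substitution maps is the wrong tool here.

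The ``naive'' additive splitting you dismiss is precisely what works, and is precisely what Proposition~\ref{prop:previa_main} was proved for. Set $\varphi^{(n)}(s):=\psi(s)+\sum_{|\beta|=n}c^s_\beta\bft^\beta$ where $H_e=\sum_s c^s_e D_{\bfs^s}$ for $|e|=n$; then $\varphi^{(n)}_{<n}=\psi$ and $\varphi^{(n)}_{n}$ is the degree-$n$ part, and Proposition~\ref{prop:previa_main}(2) gives exactly $\varphi^{(n)}\sbullet D'=(\varphi^{(n)}_{n}\sbullet D')\pcirc(\varphi^{(n)}_{<n}\sbullet D')$, while part (1) identifies $\varphi^{(n)}_{n}\sbullet D'$ with $H$ (its components vanish in degrees $0<|e|<n$ and equal $\sum_s c^s_e D_{\bfs^s}=H_e$ in degree $n$). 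Hence $\varphi^{(n)}\sbullet D'=H\pcirc F=G'$. You are of course right that $(\varphi+\varphi')\sbullet D\neq(\varphi\sbullet D)\pcirc(\varphi'\sbullet D)$ for arbitrary additive decompositions; the whole point of Proposition~\ref{prop:previa_main} is that for the specific top-degree/lower-degree split the identity \emph{does} hold, and that is the only decomposition the proof needs. The base case, the projective-limit argument for $m=\infty$, and your uniqueness argument (reading off the degree-$n$ coefficients of $\varphi$ from the degree-$n$ components of $\varphi\sbullet D$ via Proposition~\ref{prop:previa_main} and linear independence of the $D_{\bfs^s}$) are all fine and agree with the paper.
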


\begin{proof} For $m$ finite, we will proceed by induction on $m$. For $m=1$ the result is clear. Assume that the result is true for HS-derivations of length $m-1$ and consider a $D\in \HS^\bfs_k(A;m)$ such that $\{D_\alpha, |\alpha|=1\}$ is a system of generators of the $A$-module $\Der_k(A)$ and a $G\in\HS^\bft_k(A;m)$. By the induction hypothesis, there is a substitution map $\varphi':
A[[\bfs]]_{m-1} \to A[[\bft]]_{m-1}$, given by
$\varphi'(s) = \sum_{\scriptscriptstyle |\beta|\leq m-1} c^s_\beta \bft^\beta$, $s\in \bfs$, and 
such that $\tau_{m,m-1}(G)= \varphi' \sbullet \tau_{m,m-1}(D)$. 
Let $\varphi'':A[[\bfs]]_m \to A[[\bfu]]_m$ be the substitution map lifting $\varphi'$ (i.e. $\tau_{m,m-1}(\varphi'') = \varphi'$) given by
$\varphi''(s) = \sum_{\scriptscriptstyle |\beta|\leq m-1} c^s_\beta \bft^\beta \in A[[\bft]]_m$, $s\in \bfs$, 
 and consider $F= \varphi'' \sbullet D$. We obviously have $\tau_{m,m-1}(F)= \tau_{m,m-1}(G)$ and so, for $H=G \pcirc F^*$, the truncation $\tau_{m,m-1}(H)$ is the identity and $H_e=0$ for $0<|e|<m$. We deduce that each component of $H$ of highest order, $H_e$ with $|e|=m$, must be a $k$-derivation of $A$ and so there is a family $\{c^s_e, s\in \bfs\}$ of elements of $A$ such that $c^s_e=0$ for all $s$ except  
a finite number of indices and $H_e= \sum_{\scriptscriptstyle s\in \bfs} c^\bfs_e D_{\bfs^s}$, where $\{\bfs^s, s\in \bfs\}$ is the canonical basis of $\N^{(\bfs)}$.
To finish, let us consider the substitution map $\varphi:A[[\bfs]]_m \to A[[\bft]]_m$ given by $\varphi(s) = \sum_{\scriptscriptstyle |\beta|\leq m} c^s_\beta \bft^\beta$, $s\in \bfs$. From Proposition \ref{prop:previa_main} we have
$$ \varphi \sbullet D = (\varphi_{m} \sbullet D) \pcirc (\varphi_{<m} \sbullet
D) = H \pcirc (\varphi'' \sbullet D) = H \pcirc F = G. $$

For HS-derivations of infinite length, following the above procedure we can construct $\varphi$ as a projective limit of substitution maps $A[[\bfs]]_m \to A[[\bft]]_m$, $m\geq 1$.
\medskip

Now assume that the set $\{D_\alpha, |\alpha|=1\}$ is linearly independent over $A$ and let us prove that 
\begin{equation} \label{eq:uni-varphi}
\varphi \sbullet D = \psi \sbullet D\quad \Longrightarrow\quad \varphi = \psi. 
\end{equation}
The infinite length case can be reduced to the finite case since $\varphi = \psi$ if and only if all their finite truncations are equal. For the finite length case, we proceed by induction on the length $m$. Assume that the substitution maps are given by
\begin{eqnarray*}
& \displaystyle \varphi(s) = c^s:=  \sum_{\substack{\scriptscriptstyle \beta\in\N^{(\bft)}\\ \scriptscriptstyle 0<|\beta|\leq m }} c^s_\beta \bft^\beta \in \fn_0(\bft)/\ft_m(\bft)  \subset  A[[\bft]]_m,\quad s\in \bfs&\\
& \displaystyle \psi(s) = d^s:=  \sum_{\substack{\scriptscriptstyle \beta\in\N^{(\bft)}\\ \scriptscriptstyle 0<|\beta|\leq m }} d^s_\beta \bft^\beta \in \fn_0(\bft)/\ft_m(\bft)  \subset  A[[\bft]]_m,\quad s\in \bfs.
&
\end{eqnarray*}
If $m=1$, then $\varphi = \varphi_1$ and $\psi = \psi_1$ and for each $e\in\N^{(\bft)}$ with $|e|=1$ we have from Proposition \ref{prop:previa_main}
$$ \sum_{\scriptscriptstyle s\in \bfs} c^s_e D_{\bfs^s} = (\varphi_1 \sbullet D)_e = (\varphi \sbullet D)_e = (\psi \sbullet D)_e = (\psi_1 \sbullet D)_e = \sum_{\scriptscriptstyle s\in \bfs} d^s_e D_{\bfs^s}
$$
and we deduce that $c^s_e= d^s_e$ for all $s\in \bfs$ and so $\varphi=\psi$.
\medskip

Now assume that (\ref{eq:uni-varphi}) is true whenever the length is $m-1$ and take $D, \varphi $ and $\psi$ as before of length $m$ with $\varphi \sbullet D = \psi \sbullet D$. By considering $(m-1)$-truncations and using the induction hypothesis we deduce that $\tau_{m,m-1}(\varphi)=\tau_{m,m-1}(\psi)$, or equivalently $\varphi_{<m} = \psi_{<m}$.

From Proposition \ref{prop:previa_main} we obtain first that $\varphi_m\sbullet D = \psi_m\sbullet D$ and second that for each $e\in\N^{(\bft)}$ with $|e|=m$
$$ \sum_{\scriptscriptstyle s\in \bfs} c^s_e D_{\bfs^s} = \sum_{\scriptscriptstyle s\in \bfs} d^s_e D_{\bfs^s}.$$
We conclude that $\varphi_m = \psi_m$ and so $\varphi=\psi$.
\end{proof}

Now we recall the definition of integrability.

\begin{definition} (Cf. \cite{brown_1978,mat-intder-I})  \label{def:HS-integ}
Let $m\geq 1$ be an integer or $m=\infty$ and $\bfs$ a set.
\begin{enumerate}
\item[(i)] We say that a $k$-derivation $\delta:A\to
A$ is {\em $m$-integrable} (over $k$) if there is a
Hasse--Schmidt derivation $D\in \HS_k(A;m)$ such that $D_1=\delta$. Any such $D$ will
be called an  {\em $m$-integral} of $\delta$. The set of $m$-integrable
$k$-derivations of $A$ is denoted by $\Ider_k(A;m)$. We simply say that $\delta$ is
{\em integrable} if it is $\infty$-integrable and we denote
$\Ider_k(A):=\Ider_k(A;\infty)$.
\item[(ii)] We say that a $\bfs$-variate  HS-derivation $D'\in\HS^\bfs_k(A;n)$, with $1\leq n < m$,
is {\em $m$-integrable} (over $k$) if there
is a $\bfs$-variate  HS-derivation $D\in \HS^\bfs_k(A;m)$ such that $\tau_{mn}D=D'$. Any such
$D$ will be called an {\em $m$-integral} of $D'$. The set of $m$-integrable $\bfs$-variate  HS-derivations of $A$ over $k$ of length $n$ is denoted by
$\IHS^\bfs_k(A;n;m)$. We simply say that $D'$ is {\em integrable} if it is
$\infty$-integrable and we denote $\IHS^\bfs_k(A;n) := \IHS^\bfs_k(A;n;\infty)$.
\end{enumerate}
\end{definition}

\begin{corollary} Let $m\geq 1$ be an integer or $m=\infty$. The following properties are equivalent:
\begin{enumerate}
\item[(1)] $\Ider_k(A;m) = \Der_k(A)$.
\item[(2)] $\IHS^\bfs_k(A;n;m) = \HS^\bfs_k(A;n)$ for all $n$ with $1\leq n <m$ and all sets $\bfs$.
\end{enumerate}
\end{corollary}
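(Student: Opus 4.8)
The plan is to deduce the equivalence from Theorem~\ref{thm:main_1} together with the definitions of integrability. The implication $(2)\Rightarrow(1)$ is the easy direction: specialize $(2)$ to $\bfs=\{1\}$ (so that $\bfs$-variate HS-derivations of length $1$ are just $k$-derivations via the bijection (\ref{eq:HS1-Der})) and $n=1$. Then $\IHS^1_k(A;1;m)=\HS^1_k(A;1)\simeq\Der_k(A)$, and unwinding Definition~\ref{def:HS-integ} this says precisely that every $k$-derivation of $A$ admits an $m$-integral, i.e. $\Ider_k(A;m)=\Der_k(A)$.

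For $(1)\Rightarrow(2)$, assume $\Ider_k(A;m)=\Der_k(A)$. The inclusion $\IHS^\bfs_k(A;n;m)\subset\HS^\bfs_k(A;n)$ is trivial, so fix a set $\bfs$, an integer $n$ with $1\le n<m$, and an arbitrary $D'\in\HS^\bfs_k(A;n)$; I must produce an $m$-integral $D\in\HS^\bfs_k(A;m)$ with $\tau_{mn}D=D'$. The idea is to manufacture a ``universal'' HS-derivation whose degree-one part generates $\Der_k(A)$ and then to realize $D'$ as an image of a truncation of it under a substitution map, using Theorem~\ref{thm:main_1}. Concretely, choose an index set $I$ together with a surjection onto a generating family of $\Der_k(A)$; by hypothesis $(1)$ each of these generators is $m$-integrable, i.e.\ extends to some $D^{(i)}\in\HS_k(A;m)$ with $D^{(i)}_1$ the $i$-th generator. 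I would then assemble these into a single $(\bft,\ft_m(\bft))$-variate HS-derivation $\mathbf D\in\HS^\bft_k(A;m)$ (with $\bft$ a copy of $I$) whose degree-one components $\{\mathbf D_\alpha, |\alpha|=1\}$ are exactly that generating family: for instance take the external product (or an appropriate ``diagonal'' assembly) of the $D^{(i)}$, noting that external products of HS-derivations are again HS-derivations by the proposition in Section~\ref{section:HS}, and that a degree-one component of such a product recovers the individual $D^{(i)}_1$.

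Now apply Theorem~\ref{thm:main_1} with the roles of the sets reversed: $\mathbf D\in\HS^\bft_k(A;m)$ has $\{\mathbf D_\alpha,|\alpha|=1\}$ a system of generators of $\Der_k(A)$, and $D'\in\HS^\bfs_k(A;n)$. The theorem is stated for HS-derivations of the same length $m$, so first I would extend $D'$ trivially — or rather, I want an $m$-integral of $D'$, which is exactly what I am trying to build, so I cannot feed $D'$ directly into the theorem at length $m$. Instead I would apply Theorem~\ref{thm:main_1} at length $n$: there is a substitution map $\varphi\colon A[[\bft]]_n\to A[[\bfs]]_n$ with $D'=\varphi\sbullet\tau_{mn}(\mathbf D)$. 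Lift $\varphi$ arbitrarily to a substitution map $\widehat\varphi\colon A[[\bft]]_m\to A[[\bfs]]_m$ (simply keeping the same coefficients $c^t_\beta$ for $|\beta|\le n$ and setting them to zero for $n<|\beta|\le m$; this respects property (\ref{eq:cond-fini-c})). Then $D:=\widehat\varphi\sbullet\mathbf D\in\HS^\bfs_k(A;m)$ by Proposition~\ref{prop:equiv-action-subs-HS}(2), and by the compatibility of $\sbullet$ with truncations (\ref{eq:trunca_bullet}) we get $\tau_{mn}(D)=\varphi\sbullet\tau_{mn}(\mathbf D)=D'$. Hence $D'$ is $m$-integrable.

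The main obstacle I anticipate is the careful bookkeeping in the ``assembly'' step: making sure the external (or diagonal) product of the chosen integrals $D^{(i)}$ is genuinely indexed by a set $\bft$ for which the degree-one components are precisely the generators (not sums or other combinations of them), and that this works uniformly when the generating family is infinite — here the $\infty$-variate framework of Section~\ref{section:HS}, together with the fact that $\HS^\bft_k(A;m)$ is a projective limit of its finite truncations, is exactly what makes it go through. The case $m=\infty$ requires the same projective-limit argument already used in the proof of Theorem~\ref{thm:main_1}: build the substitution maps $A[[\bft]]_r\to A[[\bfs]]_r$ compatibly for all finite $r$ and pass to the limit. Everything else is a formal consequence of the results already established, so this step, while routine, is where the details must be handled with care.
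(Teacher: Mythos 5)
Your proposal is correct and follows essentially the same route as the paper: the implication $(2)\Rightarrow(1)$ is the trivial specialization to $\bfs=\{1\}$, $n=1$, and for $(1)\Rightarrow(2)$ you $m$-integrate a generating family of $\Der_k(A)$, assemble the integrals into a single $\bft$-variate HS-derivation via the (ordered) external product, invoke Theorem \ref{thm:main_1} at length $n$ to write $D'=\varphi\sbullet\tau_{mn}(\mathbf D)$, and lift $\varphi$ to length $m$ — exactly the paper's argument. The assembly step you flag is unproblematic, since for the external product one has $\mathbf D_{\bft^t}=D^{(t)}_1=\delta_t$ for $|\alpha|=1$.
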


\begin{proof} We only have to prove (1) $\Longrightarrow$ (2). Let $\{\delta_t, t\in \bft\}$ be a system of generators of the $A$-module $\Der_k(A)$, and for each $t\in\bft$  let $D^t\in \HS_k(A;m)$ be an $m$-integral of $\delta_t$. By considering some total ordering $<$ on $\bft$, we can define $D\in \HS^\bft_k(A;m)$ as the external product (see Definition \ref{defi:external-x}) of the ordered family $\{D^t, t\in\bft\}$, i.e. $D_0=\Id$ and for each $\alpha \in\N^{(\bft)}$, $\alpha\neq 0$,
$$  D_\alpha = D^{t_1}_{\alpha_{t_1}} \pcirc \cdots \pcirc D^{t_e}_{\alpha_{t_e}}\quad \text{with\ }\ 
\supp \alpha = \{t_1 <  \cdots < t_e\}.
$$
Let $n$ be an integer with $1\leq n <m$, $\bfs$ a set and 
$E\in \HS^\bfs_k(A;n)$. After Theorem \ref{thm:main_1}, there exists a substitution map $\varphi:A[[\bft]]_n \to A[[\bfs]]_n$ such that $E=\varphi \sbullet \tau_{mn}(D)$. By considering any substitution map $\varphi':A[[\bft]]_m \to A[[\bfs]]_m$ lifting $\varphi$ we find that $\varphi'\sbullet D$ is an $m$-integral of $E$ and so $E\in \IHS^\bfs_k(A;n;m)$.
\end{proof}

\bigskip

\noindent \href{http://personal.us.es/narvaez/}{L. Narv\'{a}ez Macarro}\\
Departamento de \'Algebra \&\ Instituto de Matem\'aticas (IMUS)\\ 
Universidad de Sevilla, Spain\\
email: narvaez@us.es

\end{document}